\theoremstyle{plain} 
 \newtheorem{thm}{Theorem}[section]
 \newtheorem{lem}[thm]{Lemma}
 \newtheorem{prop}[thm]{Proposition}
 \newtheorem{claim}[thm]{Claim}
\theoremstyle{definition}
  \newtheorem{defn}[thm]{Definition}
\theoremstyle{remark}
  \newtheorem{rem}[thm]{Remark}
\renewcommand{\mod}{{\rm Mod}}
\newcommand{\cal}{\mathcal}
\newcommand{\ci}[2]{\cite[#1]{#2}}
\newcommand{\calc}{\mathcal{C}}
\newcommand{\calt}{\mathcal{T}}
\newcommand{\cald}{\mathcal{D}}
\renewcommand{\pmod}{{\rm PMod}}
\newcommand{\lk}{{\rm Lk}}
\newcommand{\cale}{\mathcal{E}}
\newcommand{\cala}{\mathcal{A}}
\newcommand{\calf}{\mathcal{F}}
\newcommand{\cali}{\mathcal{I}}
\begin{document}

\title[The Torelli complex for the one-holed genus two surface]{Automorphisms of the Torelli complex for the one-holed genus two surface}
\author{Yoshikata Kida}
\author{Saeko Yamagata}
\address{Department of Mathematics, Kyoto University, 606-8502 Kyoto, Japan}
\email{kida@math.kyoto-u.ac.jp}
\address{Faculty of Education and Human Sciences, Yokohama National University, 240-8501 Yokohama, Japan}
\email{yamagata@ynu.ac.jp}
\date{April 7, 2013}
%start: May 30
\subjclass[2010]{20E36, 20F38}
\keywords{The Torelli group, the Torelli complex}

\begin{abstract}
Let $S$ be a connected, compact and orientable surface of genus two having exactly one boundary component.
We study automorphisms of the Torelli complex for $S$, and describe any isomorphism between finite index subgroups of the Torelli group for $S$.
More generally, we study superinjective maps from the Torelli complex for $S$ into itself, and show that any finite index subgroup of the Torelli group for $S$ is co-Hopfian.
\end{abstract}

\maketitle

%%%%%%%%%%%%%%%%%%%%%%%%%%%%%%%%%%%%%%%%%%%%%

\section{Introduction}

Let $S=S_{g, p}$ be a connected, compact and orientable surface of genus $g$ with $p$ boundary components.
Let $\mod^*(S)$ be the {\it extended mapping class group} of $S$, i.e., the group of isotopy classes of homeomorphisms from $S$ onto itself, where isotopy may move points of the boundary of $S$.
When $p\leq 1$, the {\it Torelli group} of $S$, denoted by $\cali(S)$, is defined as the subgroup of $\mod^*(S)$ consisting of all elements acting on the homology group $H_1(S, \mathbb{Z})$ trivially.
As a consequence of \cite{bm}, \cite{bm-add}, \cite{farb-ivanov} and \cite{kida-tor}, if $g\geq 3$ and $p\leq 1$, then any isomorphism between finite index subgroups of $\cali(S)$ is the conjugation by an element of $\mod^*(S)$.
One purpose of this paper is to obtain the same conclusion when $g=2$ and $p=1$.
A key step in the proof of these results is to describe any automorphism of the Torelli complex $\calt(S)$ of $S$, which is a simplicial complex on which $\mod^*(S)$ naturally acts.
The Torelli complex (with a certain marking) of a closed surface was first introduced by Farb-Ivanov \cite{farb-ivanov} to attack the same problem on the Torelli group.
The computation of automorphisms of $\calt(S)$ in our case is more delicate than that in the other cases.
One difficulty stems from lowness of the dimension of $\calt(S_{2, 1})$.
In fact, $\calt(S_{2, 1})$ is of dimension 1, and if $S$ is a surface dealt with in the cited references, then $\calt(S)$ is of dimension at least 2.
Details are discussed in Remark \ref{rem-low}.

Let us introduce terminology and notation to define the Torelli complex.
A simple closed curve in $S$ is called {\it essential} in $S$ if it is neither homotopic to a single point of $S$ nor isotopic to a boundary component of $S$.
Let $V(S)$ denote the set of isotopy classes of essential simple closed curves in $S$.
For $\alpha, \beta \in V(S)$, we define $i(\alpha, \beta)$ to be the {\it geometric intersection number} of $\alpha$ and $\beta$, i.e., the minimal cardinality of $A\cap B$ among representatives $A$ and $B$ of $\alpha$ and $\beta$, respectively.
Let $\Sigma(S)$ denote the set of non-empty finite subsets $\sigma$ of $V(S)$ with $i(\alpha, \beta)=0$ for any $\alpha, \beta \in \sigma$. 
We extend $i$ to the symmetric function on the square of $V(S)\sqcup \Sigma(S)$ with $i(\alpha, \sigma)=\sum_{\beta \in \sigma}i(\alpha, \beta)$ and $i(\sigma, \tau)=\sum_{\beta \in \sigma, \gamma \in \tau}i(\beta, \gamma)$ for any $\alpha \in V(S)$ and $\sigma, \tau \in \Sigma(S)$.

An essential simple closed curve $a$ in $S$ is called {\it separating} in $S$ if $S\setminus a$ is not connected.
Otherwise, $a$ is called {\it non-separating} in $S$.
These properties depend only on the isotopy class of $a$.
Let $V_s(S)$ denote the subset of $V(S)$ consisting of all elements whose representatives are separating in $S$.
We mean by a {\it bounding pair (BP)} in $S$ a pair of essential simple closed curves in $S$, $\{ a, b\}$, such that
\begin{itemize}
\item $a$ and $b$ are disjoint and non-isotopic;
\item each of $a$ and $b$ is non-separating in $S$; and
\item the surface obtained by cutting $S$ along $a\cup b$ is not connected
\end{itemize}
(see Figure \ref{fig-bp}).
%====================================
\begin{figure}
\begin{center}
\includegraphics[width=3cm]{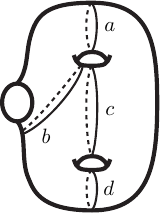}
\caption{The pair $\{ a, b \}$ is a BP. Any other pair of the four curves, $a$, $b$, $c$ and $d$, is not a BP.}\label{fig-bp}
\end{center}
\end{figure}
%====================================
These conditions depend only on the isotopy classes of $a$ and $b$.
Let $V_{bp}(S)$ denote the subset of $\Sigma(S)$ consisting of all elements that correspond to a BP in $S$.

\begin{defn}
The {\it Torelli complex} $\calt(S)$ of $S$ is defined as the abstract simplicial complex so that the set of vertices of $\calt(S)$ is the disjoint union $V_s(S) \sqcup V_{bp}(S)$, and a non-empty finite subset $\sigma$ of $V_s(S)\sqcup V_{bp}(S)$ is a simplex of $\calt(S)$ if and only if we have $i(\alpha, \beta)=0$ for any $\alpha, \beta \in \sigma$.
\end{defn}

For $\alpha \in V(S)$, let $t_{\alpha}\in \mod^*(S)$ denote the {\it (left) Dehn twist} about $\alpha$.
We note that if $p\leq 1$, then the Torelli group $\cali(S)$ contains $t_{\alpha}$ and $t_{\beta}t_{\gamma}^{-1}$ for any $\alpha \in V_s(S)$ and any $\{ \beta, \gamma \}\in V_{bp}(S)$, and is generated by all elements of these forms as discussed in \cite{johnson}.
This fact is a motivation for introducing the Torelli complex.

In this paper, we study not only automorphisms of $\calt(S_{2, 1})$ but also simplicial maps from $\calt(S_{2, 1})$ into itself satisfying strong injectivity, called superinjectivity.
We mean by a {\it superinjective map} from $\calt(S)$ into itself a simplicial map $\phi \colon \calt(S)\rightarrow \calt(S)$ satisfying $i(\phi(\alpha), \phi(\beta))\neq 0$ for any two vertices $\alpha$, $\beta$ of $\calt(S)$ with $i(\alpha, \beta)\neq 0$.
Any superinjective map from $\calt(S)$ into itself is shown to be injective (see \cite[Section 2.2]{kida-tor}).
Superinjectivity was first introduced by Irmak \cite{irmak1} for simplicial maps between the complexes of curves to study injective homomorphisms from a finite index subgroup of $\mod^*(S)$ into $\mod^*(S)$.
Our main result is the following:

\begin{thm}\label{main_thm}
We set $S=S_{2, 1}$.
Then the following assertions hold:
\begin{enumerate}
\item Any superinjective map from $\calt(S)$ into itself is induced by an element of $\mod^*(S)$. 
\item If $\Gamma$ is a finite index subgroup of $\cali(S)$ and if $f\colon \Gamma \rightarrow \cali(S)$ is an injective homomorphism, then there exists a unique element $\gamma_0$ of $\mod^*(S)$ with $f(\gamma)=\gamma_0\gamma \gamma_0^{-1}$ for any $\gamma \in \Gamma$. 
\end{enumerate}
\end{thm}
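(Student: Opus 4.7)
My plan is to prove (i) first, treating it as the core technical content, and then deduce (ii) from (i) by a standard argument using canonical reduction systems of Torelli twists. For (i), I would follow the framework of \cite{farb-ivanov} and \cite{kida-tor}: first show that any superinjective map $\phi\colon \calt(S)\to \calt(S)$ preserves the bipartition of vertices into $V_s(S)$ and $V_{bp}(S)$; then extend $\phi$ to a superinjective self-map of the curve complex $\calc(S)$ by recovering non-separating curves from BP data; and finally invoke Ivanov-type rigidity of $\calc(S)$ to obtain a unique $\gamma_0\in \mod^*(S)$ inducing $\phi$.

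The main obstacle lies in the first step. Because $\calt(S_{2,1})$ is only one-dimensional, the usual comparison of link dimensions that separates vertex types in higher-complexity surfaces is unavailable. I would instead distinguish vertex types through finer local combinatorial invariants: a separating vertex $\alpha \in V_s(S)$ splits $S$ into an $S_{1,1}$-piece and an $S_{1,2}$-piece, while a BP vertex $\{\beta,\gamma\}\in V_{bp}(S)$ splits $S$ into pieces of different topological type. Cataloguing, for each vertex type, the possible isotopy classes of essential simple closed curves disjoint from it that yield vertices of $\calt(S)$, together with the intersection and adjacency patterns of such neighbors, should give a combinatorial invariant of vertex type preserved under $\phi$. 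Once the bipartition is preserved, each BP vertex in $S_{2,1}$ combinatorially encodes its unordered pair of non-separating components, which lets me extend $\phi$ to $\calc(S)$; superinjectivity of the extension and Ivanov-Korkmaz-Luo rigidity for $\calc(S_{2,1})$ then conclude (i).

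For (ii), let $f\colon \Gamma\to \cali(S)$ be an injective homomorphism with $\Gamma$ of finite index in $\cali(S)$. For each $\alpha\in V_s(S)$ choose $n_\alpha\geq 1$ with $t_\alpha^{n_\alpha}\in \Gamma$, and for each $\epsilon=\{\beta,\gamma\}\in V_{bp}(S)$ choose $n_\epsilon\geq 1$ with $(t_\beta t_\gamma^{-1})^{n_\epsilon}\in \Gamma$. Thurston-Nielsen canonical reduction systems, combined with injectivity of $f$ and the commutation relations among Torelli twists supported on disjoint simplices, yield a vertex $\phi(\alpha)\in V_s(S)$, respectively $\phi(\epsilon)\in V_{bp}(S)$, so that suitable powers of $f(t_\alpha^{n_\alpha})$ and of $f((t_\beta t_\gamma^{-1})^{n_\epsilon})$ are the corresponding Torelli twists at $\phi(\alpha)$ and $\phi(\epsilon)$. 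Verifying that $\phi$ is a superinjective simplicial self-map of $\calt(S)$, I then apply (i) to produce $\gamma_0\in \mod^*(S)$ inducing $\phi$. Replacing $f$ by $\gamma\mapsto \gamma_0^{-1}f(\gamma)\gamma_0$, I reduce to the case $\phi=\mathrm{id}$; then $f$ fixes a power of each Torelli twist appearing in $\Gamma$, and since $\cali(S)$ is generated by Torelli twists (as noted via \cite{johnson}) and $\cali(S)$ is torsion-free, a standard centralizer argument forces $f=\mathrm{id}$ on $\Gamma$. Uniqueness of $\gamma_0$ follows from faithfulness of the natural $\mod^*(S)$-action on $\calt(S)$.
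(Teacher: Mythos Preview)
Your overall architecture matches the paper: preserve the $V_s(S)$/$V_{bp}(S)$ bipartition, extend to a map on $\calc(S)$, then invoke Ivanov's rigidity; and part (ii) is indeed deduced from (i) by the standard canonical-reduction-system argument you sketch (the paper simply cites \cite{kida-tor} for this). The serious gap is in your proposed method for the first step.

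Because $\calt(S_{2,1})$ is one-dimensional, the link of \emph{every} vertex is a countably infinite discrete set: there are no $2$-simplices, so no two neighbors of a given vertex are ever adjacent. Thus the link of an h-vertex and the link of a BP-vertex are abstractly isomorphic, and no invariant built from ``the intersection and adjacency patterns of such neighbors'' can separate the two types. This is exactly the obstruction flagged in Remark~\ref{rem-low}, and it is why the paper does \emph{not} argue locally. Instead it works with $6$-cycles (hexagons) and a genuinely global invariant: for a specific hexagon $\Theta$ of type~3 (three BP-vertices alternating with three h-vertices), every $3$-path in $\Theta$ lies in infinitely many hexagons (Proposition~\ref{prop-hex3}), whereas for hexagons of type~1 or~2 there is always some $3$-path contained in only finitely many hexagons (Theorem~\ref{thm-hex1-hex2}). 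Superinjectivity forces $\phi(\gamma\Theta)$ to again be of type~3 for every $\gamma\in\mod(S)$, and a connectivity result (Lemma~\ref{seq_hexagon}) showing that any two vertices are linked by a chain of $\mod(S)$-translates of $\Theta$ overlapping in $2$-paths then propagates the bipartition. Your proposal does not supply any substitute for this mechanism.

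A secondary point: ``each BP vertex combinatorially encodes its unordered pair of non-separating components'' is not automatic. In the paper the value of $\Phi$ on a non-separating curve $\alpha$ is defined via hexagons of type~1 whose two BP-vertices share $\alpha$, and well-definedness requires a further argument (Lemmas~\ref{square} and~\ref{well-def}) using squares in an auxiliary graph $\cale$ and the tree structure of fibers of $\pi\colon\calc(S)\to\calc(\bar S)$. Also, rather than appealing to superinjective rigidity of $\calc(S_{2,1})$, the paper first proves $\phi$ is surjective on $\calt(S)$ (via Lemmas~\ref{surj_lkb}--\ref{surj_lkd} and Proposition~\ref{prop-d-surj}), so that $\Phi$ is an honest automorphism of $\calc(S)$ and Theorem~\ref{thm-iva} applies directly.
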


The process to derive assertion (ii) from assertion (i) is already discussed in \cite[Section 6.3]{kida-tor}.
We thus omit the proof of assertion (ii)

We say that a group $\Gamma$ is {\it co-Hopfian} if any injective homomorphism from $\Gamma$ into itself is surjective.
Assertion (ii) implies that any finite index subgroup of $\cali(S_{2, 1})$ is co-Hopfian.

\begin{rem}
Farb-Ivanov \cite{farb-ivanov} announced the computation of automorphisms of the Torelli geometry for a closed surface, which is the Torelli complex with a certain marking.
As its consequence, they also announce the result that if $S=S_{g, 0}$ is a surface with $g\geq 5$, then any isomorphism between finite index subgroups of $\cali(S)$ is induced by an element of $\mod^*(S)$. 
McCarthy-Vautaw \cite{mv} computed automorphisms of $\cali(S)$ for $S=S_{g, 0}$ with $g\geq 3$.
Brendle-Margalit \cite{bm}, \cite{bm-add} showed that any automorphism of $\calt(S)$ and any isomorphism between finite index subgroups of $\cali(S)$ are induced by an element of $\mod^*(S)$ when $S=S_{g, 0}$ with $g\geq 3$.
The same conclusion for $S=S_{g, p}$ with either $g=1$ and $p\geq 3$; $g=2$ and $p\geq 2$; or $g\geq 3$ and $p\geq 0$ as Theorem \ref{main_thm} was obtained in \cite{kida-cohop}, based on \cite{kida-tor}, where the Torelli group $\cali(S)$ is defined as the subgroup of $\mod^*(S)$ generated by all elements of the forms $t_{\alpha}$ with $\alpha \in V_s(S)$ and $t_{\beta}t_{\gamma}^{-1}$ with $\{ \beta, \gamma \}\in V_{bp}(S)$.

If $S=S_{2, 0}$, then $\calt(S)$ is zero-dimensional and consists of countably infinitely many vertices.
Moreover, $\cali(S)$ is known to be isomorphic to the non-abelian free group of infinite rank, due to Mess \cite{mess} (see \cite{bbm} for another proof).
It thus turns out that automorphisms of $\calt(S)$ and $\cali(S)$ are not necessarily induced by an element of $\mod^*(S)$.

If $S=S_{2, 1}$, then $\calt(S)$ is one-dimensional and connected.
The latter is proved by using the technique in \cite[Lemma 2.1]{putman-conn} to obtain connectivity of a simplicial complex on which $\mod^*(S)$ acts.
It also follows from Lemma \ref{seq_hexagon}.
\end{rem}

\begin{rem}\label{rem-low}
In \cite{kida-tor}, when either $g=1$ and $p\geq 3$; $g=2$ and $p\geq 2$; or $g\geq 3$ and $p\geq 0$, the first author observed simplices of $\calt(S)$ of maximal dimension and the links of simplices in $\calt(S)$ to prove that any superinjective map from $\calt(S)$ into itself preserves $V_s(S)$ and $V_{bp}(S)$, respectively.
On the other hand, when $g=2$ and $p=1$, this fact does not immediately follow from only observations on simplices and their links because $\calt(S)$ is one-dimensional.
This makes our case more delicate than the other cases.

We define $\calc_s(S)$ as the full subcomplex of $\calt(S)$ spanned by $V_s(S)$, and call it the {\it complex of separating curves} for $S$.
This complex brings another difference between our case and the other cases.
In \cite{bm}, \cite{bm-add} and \cite{kida-tor}, automorphisms of $\calt(S)$ are described by showing that any automorphism of $\calc_s(S)$ is induced by an element of $\mod^*(S)$.
On the other hand, $\calc_s(S_{2, 1})$ consists of countably infinitely many $\aleph_0$-regular trees, and thus has continuously many automorphisms.
This is a direct consequence of \cite[Theorem 7.1]{kls} (see also Theorem \ref{thm-tree}).
\end{rem}

The paper is organized as follows.
In Section \ref{sec-pre}, we collect terminology employed throughout the paper.
We recall the complex of curves for $S$, ideal triangulations of punctured surfaces considered by Mosher \cite{mosher} and basic results on them. 
Setting $S=S_{2, 1}$, through Sections \ref{sec-hex}--\ref{sec-type3}, we observe hexagons in $\calt(S)$, or equivalently, simple cycles in $\calt(S)$ of length 6.
In Section \ref{sec-aut}, applying results in those sections, we show that any superinjective map $\phi$ from $\calt(S)$ into itself preserves $V_s(S)$ and $V_{bp}(S)$, respectively, and is surjective.
We construct an automorphism $\Phi$ of the complex of curves for $S$ inducing $\phi$.
It is known that $\Phi$ is induced by an element of $\mod^*(S)$, due to Ivanov \cite{iva-aut} (see Theorem \ref{thm-iva}).
Theorem \ref{main_thm} (i) then follows.
In Appendix \ref{sec-app}, we prove that there exists no simple cycle in $\calt(S)$ of length at most 5.
Hexagons in $\calt(S)$ are thus simple cycles in $\calt(S)$ of minimal length.
This is a notable property of $\calt(S)$ although we do not use it to prove Theorem \ref{main_thm} (i).

%%%%%%%%%%%%%%%%%%%%%%%%%%%%%%%%%%%%%%%%%%%%

\section{Preliminaries}\label{sec-pre}

\subsection{Terminology}

Let $S$ be a connected, compact and orientable surface.
Unless otherwise stated, we assume that a surface satisfies these conditions.
Let us denote by $\mod(S)$ the {\it mapping class group} of $S$, i.e., the subgroup of $\mod^*(S)$ consisting of isotopy classes of orientation-preserving homeomorphisms from $S$ onto itself.
We define $\pmod(S)$ as the {\it pure mapping class group} of $S$, i.e., the subgroup of $\mod^*(S)$ consisting of isotopy classes of homeomorphisms from $S$ onto itself preserving an orientation of $S$ and preserving each boundary component of $S$ as a set.

We mean by a curve in $S$ either an essential simple closed curve in $S$ or its isotopy class if there is no confusion.
A surface homeomorphic to $S_{1, 1}$ is called a {\it handle}.
A surface homeomorphic to $S_{0, 3}$ is called a {\it pair of pants}.
Let $a$ be a separating curve in $S$.
If $a$ cuts off a handle from $S$, then $a$ is called an {\it h-curve} in $S$.
If $a$ cuts off a pair of pants from $S$, then $a$ is called a {\it p-curve} in $S$.
We call an element of $V_s(S)$ corresponding to an h-curve and a p-curve in $S$ an {\it h-vertex} and a {\it p-vertex}, respectively, and call an element of $V_{bp}(S)$ a {\it BP-vertex}.

Suppose that $\partial S$, the boundary of $S$, is non-empty.
Let $I$ be the closed unit interval.
We mean by an {\it essential simple arc} in $S$ the image of an injective continuous map $f\colon I\rightarrow S$ such that
\begin{itemize}
\item we have $f(\partial I)\subset \partial S$ and $f(I\setminus \partial I)\subset S\setminus \partial S$; and
\item there exists no closed disk $D$ embedded in $S$ and whose boundary is the union of $f(I)$ and an arc in $\partial S$.
\end{itemize}
The boundary of an essential simple arc $l$ is denoted by $\partial l$.
Let $V_a(S)$ denote the set of isotopy classes of essential simple arcs in $S$, where isotopy may move the end points of arcs, keeping them staying in $\partial S$.
We often identify an element of $V_a(S)$ with its representative if there is no confusion.

An essential simple arc $l$ in $S$ is called {\it separating} in $S$ if the surface obtained by cutting $S$ along $l$ is not connected.
Otherwise, $l$ is called {\it non-separating} in $S$.
These properties depend only on the isotopy class of $l$.

For $\sigma \in \Sigma(S)$, we mean by a {\it representative} of $\sigma$ the union of mutually disjoint representatives of elements in $\sigma$.
Given two elements $\alpha, \beta \in V(S)\sqcup \Sigma(S)$ and their representatives $A$, $B$, respectively, we say that $A$ and $B$ {\it intersect minimally} if we have $|A\cap B|=i(\alpha, \beta)$. 
For $\alpha, \beta \in V(S)\sqcup \Sigma(S)$, we say that $\alpha$ and $\beta$ are {\it disjoint} if $i(\alpha, \beta)=0$.
Otherwise, we say that $\alpha$ and $\beta$ {\it intersect}.
For an element $\alpha$ of $V(S)$ (or its representative), we denote by $S_{\alpha}$ the surface obtained by cutting $S$ along $\alpha$.
Similarly, for an element $\sigma$ of $\Sigma(S)$ (or its representative), we denote by $S_{\sigma}$ the surface obtained by cutting $S$ along all curves in $\sigma$.
Each component of $S_{\sigma}$ is often identified with a complementary component in $S$ of a tubular neighborhood of a one-dimensional submanifold representing $\sigma$ if there is no confusion.
For any component $Q$ of $S_{\sigma}$, the set $V(Q)$ is naturally identified with a subset of $V(S)$.

%%%%%%%%%%%%%%%%%%%%%%%%%%%%%%%%%%%%%%%%%%%%

\subsection{The complex of curves}

In the proof of Theorem \ref{main_thm} (i), we use a result on automorphisms of the complex of curves.
The {\it complex of curves} for a surface $S$, denoted by $\calc(S)$, is defined as the abstract simplicial complex so that the sets of vertices and simplices of $\calc(S)$ are $V(S)$ and $\Sigma(S)$, respectively.

\begin{thm}[\ci{Theorem 1}{iva-aut}]\label{thm-iva}
If $S=S_{g, p}$ is a surface with $g\geq 2$ and $p\geq 0$, then any automorphism of $\calc(S)$ is induced by an element of $\mod^*(S)$.
\end{thm}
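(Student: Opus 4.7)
The plan is to show that any automorphism $\Phi$ of $\calc(S)$ carries enough topological information that it can be realized by a self-homeomorphism of $S$. The strategy has two stages: first extract purely combinatorial invariants from $\calc(S)$ that detect the topological type of each vertex and the way vertices sit in a pants decomposition, then use this to reconstruct a mapping class whose action on $V(S)$ matches $\Phi$.

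For the first stage, the goal is to recognize, from the local structure of $\calc(S)$ alone, whether a curve is separating or non-separating, and more generally the homeomorphism type of the cut surface. The standard tool is the link $\lk(\alpha)$, which is naturally isomorphic to the curve complex of $S_{\alpha}$; by induction on complexity, the topological type of $S_{\alpha}$ is encoded, for instance, by the maximal dimension of a simplex in $\lk(\alpha)$ and the combinatorial pattern of its codimension-one faces. Once these invariants are in hand, $\Phi$ preserves the partition of $V(S)$ into topological types and, for any simplex $\sigma$, preserves the combinatorial adjacency pattern of the components of $S_{\sigma}$.

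For the second stage, fix a pants decomposition $P = \{\alpha_{1}, \dots, \alpha_{3g-3+p}\}$ of $S$. Then $P$ is a maximal simplex of $\calc(S)$, so $\Phi(P)$ is also a pants decomposition. By the combinatorial invariants from the first stage, $P$ and $\Phi(P)$ have the same topological configuration in $S$, so one can choose $h \in \mod^{*}(S)$ with $h(P) = \Phi(P)$ realizing the same dual graph; replacing $\Phi$ by $h^{-1}\Phi$, we reduce to the case in which $\Phi$ fixes every vertex of $P$. It then remains to show $\Phi$ fixes every $\beta \in V(S)$. The key point is that $\beta$ is determined by its intersection pattern with a suitable finite set of curves adjacent to $P$, obtained by performing elementary moves (flips) on the $\alpha_{i}$; since $\Phi$ is bijective and preserves the elementary-move structure (two pants decompositions differing by a flip share a codimension-one face), $\Phi$ must fix each of these new vertices, and a standard finiteness argument then forces $\Phi(\beta) = \beta$.

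The main obstacle is handling the low-complexity surfaces, where dimension counts and link-based invariants degenerate. For $S_{2,0}$ the complex is only one-dimensional in the sense that one must argue directly that the four- or five-holed sphere case has enough rigidity; the characterization of topological types via links must be established by hand in each small base case before the induction can proceed. A further subtle point is verifying that the choice of $h$ in the reduction step is well-defined up to the kernel of $\mod^{*}(S) \to \aut(\calc(S))$ (e.g.\ the hyperelliptic involution in genus two), so that the resulting element of $\mod^{*}(S)$ genuinely induces $\Phi$ rather than only agreeing with it on $P$.
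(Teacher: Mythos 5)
First, a remark on scope: the paper does not prove this statement at all; it is quoted from Ivanov (Theorem 1 of \cite{iva-aut}) and used as a black box, so there is no internal proof to compare against. Your proposal therefore has to be measured against the known proofs of Ivanov \cite{iva-aut}, Korkmaz \cite{kork-aut} and Luo \cite{luo}. Your first stage (recognizing topological types of vertices and simplices from maximal simplices and links) is sound and is indeed how those proofs begin.

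The genuine gap is in your second stage. After normalizing so that $\Phi$ fixes a pants decomposition $P=\{\alpha_1,\dots,\alpha_{3g-3+p}\}$ vertexwise, you assert that $\Phi$ must fix each curve obtained from $P$ by an elementary move ``since $\Phi$ is bijective and preserves the elementary-move structure,'' and that a finiteness argument then forces $\Phi(\beta)=\beta$ for all $\beta$. This does not follow. For a fixed $i$ there are infinitely many curves that are disjoint from all $\alpha_j$ with $j\neq i$ and intersect $\alpha_i$ minimally (they differ, for instance, by powers of the Dehn twist about $\alpha_i$), and a simplicial automorphism fixing $P$ vertexwise is a priori free to permute this infinite family: the complex $\calc(S)$ only records disjointness, and ``preserving codimension-one faces'' is automatic for any simplicial automorphism, so it carries no extra rigidity. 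Showing that the stabilizer of $P$ in $\aut(\calc(S))$ is small is essentially equivalent to the theorem itself. The heart of every known proof is precisely the ingredient your sketch omits: proving that an automorphism of $\calc(S)$ preserves geometric intersection data finer than disjointness --- concretely, that it takes pairs of curves intersecting exactly once (or exactly twice, in the relevant configurations) to pairs of the same kind; Ivanov obtains this by passing to arcs and ideal triangulations, Luo by analyzing Farey-type subcomplexes of $\calc(S)$, and only then can one pin down images of individual curves and reconstruct a homeomorphism. Without that step your concluding argument has nothing to run on, so the proposal as written does not close. (Your final point about the hyperelliptic involution concerns uniqueness of the inducing mapping class, not existence, and is not an obstacle to the statement being proved.)
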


We refer to \cite{kork-aut} and \cite{luo} for similar results for other surfaces.
Theorem \ref{main_thm} (i) is obtained by showing that when $S=S_{2, 1}$, for any superinjective map $\phi \colon \calt(S)\rightarrow \calt(S)$, there exists an automorphism $\Phi$ of $\calc(S)$ inducing $\phi$, that is, satisfying the equalities
\[\Phi(\alpha)=\phi(\alpha)\quad \textrm{and}\quad \{ \Phi(\beta), \Phi(\gamma)\} =\phi(\{ \beta, \gamma \})\]
for any $\alpha \in V_s(S)$ and any $\{ \beta, \gamma \}\in V_{bp}(S)$.

We note that the complex of separating curves for $S$, defined in Remark \ref{rem-low} and denoted by $\calc_s(S)$, is the full subcomplex of $\calc(S)$ spanned by $V_s(S)$.

%%%%%%%%%%%%%%%%%%%%%%%%%%%%%%%%%%%%%%%%%%%%

\subsection{Ideal triangulations of a punctured surface}

We recall basic properties of ideal triangulations of a punctured surface discussed by Mosher \cite{mosher}, which will be used only in the proof of Lemma \ref{surj_lkd}.
Let $S$ be a closed surface of positive genus $g$, and let $P$ be a non-empty finite subset of $S$.
The pair $(S, P)$ is then called a {\it punctured surface}.
Let $I$ be the closed unit interval.
We mean by an {\it ideal arc} in $(S, P)$ the image of a continuous map $f\colon I\rightarrow S$ such that
\begin{itemize}
\item we have $f(\partial I)\subset P$ and $f(I\setminus \partial I)\subset S\setminus P$;
\item $f$ is injective on $I\setminus \partial I$; and
\item there exists no closed disk $D$ embedded in $S$ with $\partial D=f(I)$ and $(D\setminus \partial D)\cap P=\emptyset$.
\end{itemize}
Two ideal arcs $l_1$, $l_2$ in $(S, P)$ are called {\it isotopic} if we have $l_1\cap P=l_2\cap P$; and $l_1$ and $l_2$ are isotopic relative to $l_1\cap P$ as arcs in $(S\setminus P)\cup (l_1\cap P)$.
We mean by an {\it ideal triangulation} of $(S, P)$ a cell division $\delta$ of $S$ such that
\begin{enumerate}
\item[(a)] the set of 0-cells of $\delta$ is $P$;
\item[(b)] any 1-cell of $\delta$ is an ideal arc in $(S, P)$; and
\item[(c)] any 2-cell of $\delta$ is a {\it triangle}, that is, it is obtained by attaching a Euclidean triangle $\tau$ to the 1-skeleton of $\delta$, mapping each vertex of $\tau$ to a 0-cell of $\delta$, and each edge of $\tau$ to a 1-cell of $\delta$.
\end{enumerate}
The following properties are noticed in \cite[p.14]{mosher}.

\begin{lem}\label{lem-mosher}
The following assertions hold:
\begin{enumerate}
\item Any cell division of $S$ satisfying conditions (a) and (c) in the definition of an ideal triangulation necessarily satisfies condition (b).
\item Let $\delta$ be an ideal triangulation of $(S, P)$.
Then any two distinct 1-cells of $\delta$ are not isotopic.
\end{enumerate}
\end{lem}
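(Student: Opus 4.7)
The plan is to reduce both parts of the lemma to an Euler characteristic computation on an embedded disk in $S$ whose interior contains no point of $P$, combined with the standard incidence identity that every 1-cell of $\delta$ is met exactly twice, counted with multiplicity, by the sides of 2-cells (this holds because $S$ is a closed surface). In both parts, the goal is to show that such a disk cannot be tiled by triangles in the prescribed way.

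For part (i), I would first observe that the definition of a cell division already gives the first two conditions in the definition of an ideal arc for any 1-cell, so the only condition that could possibly fail is the absence of an embedded disk $D\subset S$ with $\partial D=f(I)$ and $(D\setminus\partial D)\cap P=\emptyset$. If such a $D$ existed, then $f(0)=f(1)=p\in P$, and $D$ inherits a CW structure from $\delta$ in which no 0-cell lies in the interior. Letting $i$ denote the number of 1-cells of $\delta$ lying in the interior of $D$ (each a loop at $p$), I would compute $V=1$ and $E=i+1$; using that each interior 1-cell borders two triangles inside $D$ while the boundary 1-cell $f(I)$ borders exactly one, I get $3F=2i+1$. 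The Euler characteristic identity $\chi(D)=1$ gives $F=i+1$, so $3(i+1)=2i+1$, forcing $i=-2$, a contradiction.

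For part (ii), suppose two distinct 1-cells $e_1,e_2$ of $\delta$ were isotopic. They would share their endpoints and cobound an embedded bigon $B\subset S$ with $(B\setminus\partial B)\cap P=\emptyset$. I would repeat the counting on $B$, splitting into two sub-cases. If $e_1,e_2$ join two distinct punctures then $V=2$, $E=2+i$, and $3F=2i+2$, so $\chi(B)=1$ gives $F=i+1$ and hence $3(i+1)=2i+2$, i.e., $i=-1$. If $e_1,e_2$ are loops at a single puncture then $V=1$, $E=2+i$, $3F=2i+2$, and $\chi(B)=1$ gives $F=i+2$ and hence $3(i+2)=2i+2$, i.e., $i=-4$. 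Both are impossible.

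The main delicate point is verifying the incidence identity $\sum_{\Delta}\#\{\text{edges of the triangle attached to form }\Delta\text{ which are mapped to }e\}=2$ for every 1-cell $e$, since the attaching map of a 2-cell may identify several edges of $\tau$ with the same 1-cell of $\delta$, and the 2-cells bordering $e$ on its two sides may coincide. The identity is nevertheless preserved if one counts each local side of an interior point of $e$ with multiplicity, which keeps the arithmetic above intact and yields the same contradictions.
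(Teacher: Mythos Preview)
The paper does not prove this lemma; it simply records that these facts are noted on p.\,14 of \cite{mosher}. Your Euler-characteristic argument therefore supplies a self-contained proof, which is more than the paper offers.

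Part (i) and the two-puncture sub-case of part (ii) are correct as written: in each case the putative disk is a genuine subcomplex of $\delta$ (since its interior meets no $0$-cells, every $1$-cell and $2$-cell of $\delta$ touching the interior lies entirely inside), and your $V-E+F$ count goes through.

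In the single-puncture sub-case of (ii), however, there is a slip. If $e_1$ and $e_2$ are both loops at one point $p$, then $e_1\cup e_2$ is a wedge of two circles at $p$, not a single circle, so it cannot be the boundary of an embedded disk; your assertion $\chi(B)=1$ is therefore not correct. A clean fix is to blow up $p$ to a boundary circle $C_p$: the lifts $\hat e_1,\hat e_2$ are disjoint properly embedded arcs with endpoints on $C_p$, and the isotopy rel $p$ lifts to an isotopy of arcs, so $\hat e_1$ and $\hat e_2$ cobound a disk $\hat B$ in the blown-up surface. Collapsing $C_p$ back to $p$ yields a subcomplex $B$ of $\delta$ with $\chi(B)=0$ (a disk with two boundary points identified). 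Running your count with $\chi(B)=0$ gives $1-(2+i)+F=0$, so $F=i+1$, and then $3(i+1)=2i+2$ forces $i=-1$, still a contradiction. So the strategy is sound, but this sub-case needs the corrected Euler characteristic.
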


Let $R$ be a surface of genus $g$ with $|P|$ boundary components.
Suppose that $S$ is obtained from $R$ by shrinking each component of $\partial R$ into a point, and that $P$ is the set of points into which components of $\partial R$ are shrunken.
The natural map from $R$ onto $S$ induces the bijection from $V_a(R)$ onto the set of isotopy classes of ideal arcs in $(S, P)$.

%%%%%%%%%%%%%%%%%%%%%%%%%%%%%%%%%%%%%%%%%%%%

\section{Non-existence of some hexagons}\label{sec-hex}

Let $\cal{G}$ be a simplicial complex.
We mean by a {\it hexagon} in $\cal{G}$ the full subcomplex of $\cal{G}$ spanned by six vertices $v_1,\ldots, v_6$ such that for any $j$ mod 6, $v_j$ and $v_{j+1}$ are adjacent; $v_j$ and $v_{j+2}$ are not adjacent; and $v_j$ and $v_{j+3}$ are not adjacent.
In this case, we say that the hexagon is defined by the 6-tuple $(v_1,\ldots, v_6)$.

Throughout this section, we set $S=S_{2, 1}$.
Examples of hexagons in $\calt(S)$ are described in Sections \ref{sec-type1}--\ref{sec-type3}.
In this section, we show that there exists no hexagon in $\calt(S)$ containing at most one BP-vertex.
Note that any separating curve in $S$ is an h-curve in $S$, and that any edge of $\calt(S)$ consists of either two h-vertices or an h-vertex and a BP-vertex (see Figure \ref{fig-edge}).
%====================================
\begin{figure}
\begin{center}
\includegraphics[width=7cm]{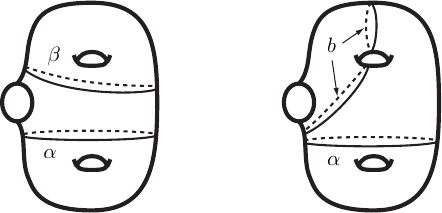}
\caption{Each of $\{ \alpha, \beta \}$ and $\{ \alpha, b\}$ is an edge of $\calt(S)$.}\label{fig-edge}
\end{center}
\end{figure}
%====================================
It follows that the number of BP-vertices of a hexagon in $\calt(S)$ is at most 3.

\begin{lem}\label{6-h_hex}
There exists no hexagon in $\calt(S)$ consisting of only h-vertices. 
\end{lem}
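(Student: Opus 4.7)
Plan:

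The strategy is to exploit a rigid structural feature of edges of $\calt(S)$ between h-vertices and then invoke the tree structure of the separating-curve complex to rule out a six-cycle.

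First I would establish the key structural fact: if $a$ and $b$ are distinct h-vertices with $i(a,b)=0$, then $a$, $b$, and $\partial S$ cobound an embedded pair of pants in $S$. Cutting $S$ along an h-curve $a$ produces a handle $H_a\cong S_{1,1}$ and a complementary piece $R_a\cong S_{1,2}$. Since $S_{1,1}$ contains no essential simple closed curve that is separating in $S$, the curve $b$ must lie in $R_a$. And the only essential separating curves in $R_a\cong S_{1,2}$ are those cobounding a pair of pants with both boundary components of $R_a$, as can be verified by enumerating the possible Euler-characteristic splittings: every other case yields a disk or an annulus bounded by the curve, contradicting essentialness.

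Next I would deduce the link property: if $a$ and $c$ are distinct h-vertices each disjoint from a common h-vertex $b$, then they intersect. Both $a$ and $c$ lie in $R_b\cong S_{1,2}$ and each cobounds a pair of pants with $\partial R_b$. If they were disjoint, then one of them, say $c$, would lie either in the pair of pants or in the handle cut off by the other. A pair of pants contains no essential simple closed curve in its interior, and a handle $S_{1,1}$ contains no simple closed curve that is simultaneously essential in $S_{1,1}$ and separating in $R_b$; both possibilities contradict the hypotheses on $c$.

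Finally I would invoke the tree structure of $\calc_s(S)$: by Remark~\ref{rem-low} and Theorem~\ref{thm-tree} (cf.\ \ci{Theorem~7.1}{kls}), the complex $\calc_s(S_{2,1})$ is a disjoint union of trees. Since every essential separating curve in $S_{2,1}$ is an h-curve, the full subcomplex of $\calt(S)$ spanned by h-vertices coincides with $\calc_s(S)$. A hexagon of h-vertices would thus be a six-cycle in a forest, a contradiction. The main obstacle, should one wish to proceed without invoking Theorem~\ref{thm-tree}, lies in this last step: the link property alone only rules out triangles, not longer cycles. A self-contained route would likely cap $\partial S$ with a marked disk to obtain the closed surface $S_{2,0}$ with a marked point, observe that disjoint h-curves in $S$ correspond to pairs of $S_{2,0}$-isotopic separating curves cobounding an annulus around the marked point, and derive the contradiction from the incompatibility of six such nested annuli arranged cyclically around a single point.
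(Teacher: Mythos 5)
Your proposal is correct and, at its core, is the same argument as the paper's: the contradiction comes from Theorem \ref{thm-tree} (KLS, Theorem 7.1) together with the observation that two disjoint, distinct h-curves in $S_{2,1}$ cobound a pair of pants with $\partial S$ and hence become isotopic after capping the boundary, so a hexagon of h-vertices would give a cycle inside a single fiber-tree of $\pi$ (equivalently, in the forest $\calc_s(S)$ of Remark \ref{rem-low}). Your preliminary "link property" only excludes triangles (already impossible since $\calt(S)$ is one-dimensional) and is not needed, but it does no harm; the pair-of-pants fact is exactly the bridge that justifies the forest decomposition you cite.
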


To prove this lemma, we use the following:

\begin{thm}[\ci{Theorem 7.1}{kls}]\label{thm-tree}
Let $S=S_{2, 1}$ be a surface, and let $\bar{S}$ denote the closed surface obtained from $S$ by attaching a disk to the boundary of $S$.
We define
\[\pi \colon \calc(S)\rightarrow \calc(\bar{S})\]
as the simplicial map associated to the inclusion of $S$ into $\bar{S}$.
Then for any vertex $\alpha$ of $\calc(\bar{S})$, the full subcomplex of $\calc(S)$ spanned by $\pi^{-1}(\alpha )$ is a tree.
\end{thm}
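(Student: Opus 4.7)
The plan is to argue by contradiction via the projection $\pi \colon \calc(S) \to \calc(\bar S)$ to the closed surface $\bar S = S_{2,0}$, using Theorem \ref{thm-tree}. Suppose $(v_1, \ldots, v_6)$ defines a hexagon in $\calt(S)$ all of whose vertices are h-vertices, and set $w_i := \pi(v_i) \in V(\bar S)$.

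First I would check that each $w_i$ is a separating vertex of $\bar S$: a representative of $v_i$ cuts $S$ into a handle $S_{1,1}$ and a complementary piece of topological type $S_{1,2}$ which contains $\partial S$, so after capping off $\partial S$ the latter piece becomes another copy of $S_{1,1}$, whence $w_i$ separates $\bar S$ into two handles. Next I would establish the key geometric observation that any two disjoint essential separating curves in $\bar S = S_{2,0}$ are isotopic: if $w$ separates $\bar S$ into handles $H_1, H_2 \cong S_{1,1}$, then any essential curve of $\bar S$ disjoint from $w$ can be isotoped into some $H_i$, but the only essential simple closed curves in $S_{1,1}$ are either non-separating in $H_i$ (hence non-separating in $\bar S$) or boundary-parallel (hence isotopic to $w$).

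Applying this observation to each consecutive pair $v_i, v_{i+1}$ of the hexagon, which admits disjoint representatives in $S$ and therefore projects to a pair of curves that are either equal or disjoint and distinct in $V(\bar S)$, forces $w_i = w_{i+1}$ for every $i$ modulo $6$. By transitivity, there is a single vertex $w \in V(\bar S)$ with $v_1, \ldots, v_6 \in \pi^{-1}(w)$.

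By Theorem \ref{thm-tree}, the full subcomplex $T$ of $\calc(S)$ spanned by $\pi^{-1}(w)$ is a tree. Since the edge relation of $\calt(S)$ restricted to h-vertices coincides with disjointness in $\calc(S)$, the six edges $v_i v_{i+1}$ of the hexagon are edges of $T$, and together they form a cycle of length $6$, contradicting the fact that $T$ is a tree. The only nontrivial ingredient is the geometric fact that disjoint essential separating curves in $S_{2,0}$ must be isotopic; once this is in hand, Theorem \ref{thm-tree} delivers the contradiction with no further work, so I do not anticipate any serious obstacle.
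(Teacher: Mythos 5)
You have not proved the statement you were asked to prove. The statement is Theorem \ref{thm-tree} itself: that for each vertex $\alpha$ of $\calc(\bar S)$ the full subcomplex of $\calc(S)$ spanned by the fiber $\pi^{-1}(\alpha)$ is a tree. Your argument instead \emph{assumes} this theorem (you invoke it explicitly in your final paragraph) and uses it to derive a different assertion, namely that $\calt(S)$ contains no hexagon consisting entirely of h-vertices --- which is Lemma \ref{6-h_hex} of the paper, not Theorem \ref{thm-tree}. As a proof of the assigned statement the proposal is therefore circular and contains no content bearing on why the fiber has no cycles; note also that the paper itself does not prove Theorem \ref{thm-tree} but imports it from Kent--Leininger--Schleimer, where the tree structure is obtained by identifying the fiber over $\alpha$ with a tree built from the point-pushing (disk-pushing) description of curves in $S_{2,1}$ that become isotopic to $\alpha$ after capping the boundary. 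None of that machinery appears in your write-up, and the geometric facts you do verify (h-curves of $S$ project to separating curves of $\bar S$, and disjoint essential separating curves in $S_{2,0}$ are isotopic) do not constrain the structure of a single fiber at all.

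For what it is worth, the argument you did write is essentially correct as a proof of Lemma \ref{6-h_hex}, and it coincides with the paper's proof of that lemma: adjacent h-vertices have the same $\pi$-image, so an all-h-vertex hexagon would produce a $6$-cycle inside one fiber, contradicting Theorem \ref{thm-tree}. But that is a consequence of the theorem, not a proof of it; to address the actual statement you would need to establish the tree property of $\pi^{-1}(\alpha)$ from scratch.
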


\begin{proof}[Proof of Lemma \ref{6-h_hex}]
We note that $\pi$ sends two adjacent h-vertices of $\calc(S)$ to the same vertex.
If there were a hexagon $\Pi$ in $\calt(S)$ consisting of only h-vertices, then $\pi$ would send $\Pi$ to a single vertex.
This contradicts Theorem \ref{thm-tree}.
\end{proof}

\begin{lem}\label{one-BP_hex}
There exists no hexagon in $\calt(S)$ containing exactly one BP-vertex. 
\end{lem}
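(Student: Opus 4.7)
The plan is to argue by contradiction: assume there is a hexagon $(v_1, v_2, v_3, v_4, v_5, v_6)$ in $\calt(S)$ with $v_1$ the unique BP-vertex and $v_2, \ldots, v_6$ h-vertices, and derive that $v_2 = v_6$ in $V(S)$, contradicting the hexagon axiom. The argument from Lemma \ref{6-h_hex} applies directly to the path $v_2, v_3, v_4, v_5, v_6$: since any two disjoint separating curves in $\bar{S} = S_{2,0}$ are necessarily isotopic, each consecutive pair of h-vertices in the path has the same image under $\pi \colon \calc(S) \to \calc(\bar{S})$, so all five project to a common vertex $\alpha_0 \in V(\bar{S})$. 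In particular $v_2$ and $v_6$ are isotopic in $\bar{S}$.

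Next I set up the topological picture of $v_2, v_6$ inside $S$. Write $v_1 = \{b_1, b_2\}$ and cut $S$ along $b_1 \cup b_2$ to obtain a pair of pants $P$ containing $\partial S$ and a component $Q$ homeomorphic to $S_{1,2}$. Both $v_2$ and $v_6$ are essential simple closed curves disjoint from $v_1$; since $P$ carries no essential simple closed curves, both lie in $Q$. A short case analysis by Euler characteristic on the two sides of $v_j$ in $Q$, excluding those configurations in which a side is a disk (which would make $v_j$ inessential) or an annulus (which would force $v_j$ to be isotopic to $b_1$ or $b_2$, impossible since $v_j$ is separating and $b_i$ is not), shows that each $v_j$ for $j \in \{2,6\}$ cuts $Q$ into a pair of pants with boundary $\{v_j, b_1, b_2\}$ and a handle homeomorphic to $S_{1,1}$.

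The crux of the proof, and what I expect to be the main obstacle, is to rule out that two h-curves in $Q$ of this form can fail to be isotopic in $S$ while still being isotopic in $\bar{S}$. Since $v_2 \simeq v_6$ in $\bar{S}$, they cobound an embedded annulus $A$ in $\bar{S}$. After a generic perturbation the boundary $\partial A = v_2 \cup v_6$ is disjoint from $b_1 \cup b_2$; for each $i$ the intersection $A \cap b_i$ is then a closed $1$-submanifold of the circle $b_i$ with empty boundary, forcing it to be either empty or all of $b_i$. If $b_i \subset A$, then $b_i$ is a simple closed curve in the interior of the annulus $A$, hence either nullhomotopic in $A$ or parallel to a boundary component of $A$; the first contradicts that $b_i$ is essential in $\bar{S}$, and the second isotopes $b_i$ to one of $v_2, v_6$ in $\bar{S}$, contradicting that $b_i$ is non-separating in $\bar{S}$ while $v_2$ and $v_6$ are separating. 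Therefore $A$ is disjoint from $b_1 \cup b_2$, and since $A$ is connected with $\partial A \subset Q$, it lies entirely in $Q$. Hence $v_2$ and $v_6$ cobound an annulus in $Q$ and are isotopic in $Q$, and so in $S$, contradicting that they are distinct vertices of the hexagon.
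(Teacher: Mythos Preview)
Your overall strategy is the same as the paper's: assume such a hexagon exists with $v_1$ the unique BP-vertex, use $\pi$ to conclude $\pi(v_2)=\cdots=\pi(v_6)$, observe that $v_2$ and $v_6$ lie in the component $Q$ of $S_{v_1}$ not containing $\partial S$, and then deduce $v_2=v_6$ in $S$. The paper compresses the last step to one sentence, implicitly using that the inclusion $Q\hookrightarrow \bar S$ induces an injection on isotopy classes of essential curves.

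Your annulus argument for that last step, however, has a genuine gap. By the hexagon axioms $v_2$ and $v_6$ are \emph{not} adjacent in $\calt(S)$, so $i(v_2,v_6)\neq 0$; any representatives of $v_2$ and $v_6$ in $S$---in particular the ones you have placed in $Q$ disjoint from $b_1\cup b_2$---must intersect. Two intersecting simple closed curves cannot cobound an embedded annulus, so the surface $A$ with $\partial A=v_2\cup v_6$ that you posit does not exist for these representatives. You could isotope $v_6$ inside $\bar S$ to make it disjoint from $v_2$, but then you lose the disjointness from $b_1\cup b_2$ that the rest of your argument relies on, and the clopen analysis of $A\cap b_i$ no longer applies.

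The clean fix is to replace the annulus argument by the standard fact that for an essential (incompressible) subsurface $Q$ of a surface $X$, two essential simple closed curves in $Q$ that are isotopic in $X$ are already isotopic in $Q$. Here $\partial Q=b_1\cup b_2$ consists of non-separating (hence essential) curves in $\bar S$, so $Q$ is essential in $\bar S$; the equality $\pi(v_2)=\pi(v_6)$ then forces $v_2=v_6$ in $Q$, hence in $S$. Alternatively, note that in $\bar S$ the pair-of-pants component becomes an annulus, so $Q$ is isotopic to $\bar S$ cut along the single curve $\pi(b_1)=\pi(b_2)$; injectivity of $V(Q)\to V(\bar S)$ is then immediate. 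Either way this is exactly what the paper's one-line deduction rests on.
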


\begin{proof}
Suppose that there exists such a hexagon $\Pi$ in $\calt(S)$. 
Let $(a, b, c, d, e, f)$ be a 6-tuple defining $\Pi$ with $a$ a BP-vertex. 
We then have the equality $\pi(b)=\pi(c)=\pi(d)=\pi(e)=\pi(f)$.
The curves $b$ and $f$ are in the component of $S_a$ that does not contain $\partial S$.
The equality $\pi(b)=\pi(f)$ thus implies the equality $b=f$.
This is a contradiction.
\end{proof}

%%%%%%%%%%%%%%%%%%%%%%%%%%%%%%%%%%%%%%%%%

\section{Hexagons of type 1}\label{sec-type1}

Throughout this section, we set $S=S_{2, 1}$.
We say that a hexagon in $\calt(S)$ is of {\it type 1} if it is defined by a 6-tuple $(v_1,\ldots, v_6)$ such that $v_1$, $v_3$, $v_4$ and $v_5$ are h-vertices and $v_2$ and $v_6$ are BP-vertices.
To construct such a hexagon in $\calt(S)$, we recall a hexagon in $\calc_s(S_{1, 3})$ (see Figure \ref{fig_hex_first}).
%====================================
\begin{figure}
\begin{center}
\includegraphics[width=12cm]{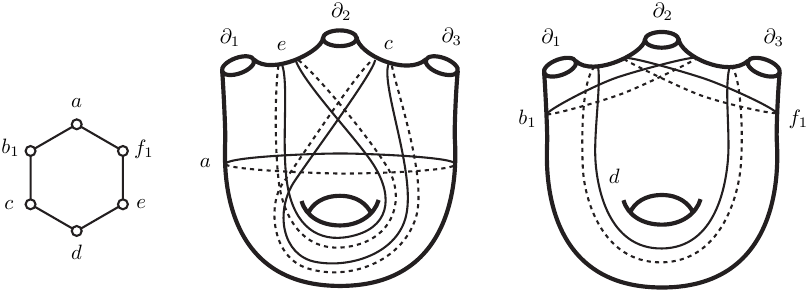}
\caption{The 6-tuple $(a, b_1, c, d, e, f_1)$ of the above curves defines a hexagon in $\calc_s(S_{1, 3})$. Let $S=S_{2, 1}$ be a surface, and let $\alpha$ be a non-separating curve in $S$. If $S_{\alpha}$ is drawn as above so that $\partial_1$ and $\partial_3$ correspond to $\alpha$ and $\partial_2$ corresponds to $\partial S$, then the 6-tuple $(a, b, c, d, e, f)$ with $b=\{ \alpha, b_1\}$ and $f=\{ \alpha, f_1\}$ defines a hexagon in $\calt(S)$ of type 1.}\label{fig_hex_first}
\end{center}
\end{figure}
%====================================
A fundamental property of hexagons in $\calc_s(S_{1, 3})$ is the following:

\begin{thm}[\ci{Theorem 5.2}{kida-tor}]\label{thm-13}
We set $X=S_{1, 3}$.
Then any two hexagons in $\calc_s(X)$ are sent to each other by an element of $\pmod(X)$.
\end{thm}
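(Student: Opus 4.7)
The plan is to combine a finite combinatorial classification with the change-of-coordinates principle for $\pmod(X)$. First, I would classify $V_s(X)$ into $\pmod(X)$-orbits. A separating curve in $X = S_{1,3}$ either cuts off a pair of pants containing a specified pair of the three boundary components (three \emph{p-types}, indexed by that pair), or cuts off a one-holed torus from a four-holed-sphere complement (a single \emph{h-type}). This gives four $\pmod(X)$-orbits on the vertex set of $\calc_s(X)$.

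Next, I would pin down the cyclic sequence of topological types along a hexagon $\Pi = (v_1, \ldots, v_6)$. For each adjacent pair $v_i, v_{i+1}$ the disjoint union of minimal representatives cuts $X$ into pieces whose topology is forced by the two types and by how the three boundary components of $X$ are distributed among the pieces. The non-adjacent intersection conditions $i(v_j, v_{j+2}) \neq 0$ and $i(v_j, v_{j+3}) \neq 0$ then sharply restrict which cyclic words of length 6 in the four types can be realized. A systematic enumeration, modulo the dihedral symmetry of the hexagon and the $S_3$-symmetry permuting the three boundary components of $X$, should leave exactly the cyclic type sequence realized by the model hexagon in Figure \ref{fig_hex_first}.

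With the cyclic type sequence fixed, I would construct the required element of $\pmod(X)$ inductively. First send the edge $\{v_1, v_2\}$ to its model counterpart via change of coordinates; this is possible because the $\pmod(X)$-orbit of a disjoint pair of separating curves is determined by the pair of types, since the topology of the complement is then forced. Then, working along the hexagon, adjust successively by pure mapping classes supported in complementary subsurfaces of the already-aligned initial segment to drag $v_3, v_4, v_5, v_6$ onto their targets. At each step the cyclic type sequence agreement together with the intersection pattern with the aligned vertices pins down the target curve up to the residual mapping class group action on the relevant subsurface.

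The main obstacle, I expect, is the middle step. One must verify that every rejected cyclic type sequence admits an obstruction of the form: some pair $v_j, v_{j+2}$ or $v_j, v_{j+3}$ is forced to lie in disjoint complementary subsurfaces of the preceding pieces and therefore cannot intersect, contradicting the hexagon condition. Because the combinatorics admits several plausible candidate sequences — especially ones with repeated h-types or alternating p-types — keeping this casework complete but non-redundant, while correctly exploiting the hexagon's dihedral symmetry, is where the proof will be most delicate.
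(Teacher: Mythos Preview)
The paper does not prove this statement: Theorem~\ref{thm-13} is quoted verbatim as Theorem~5.2 of \cite{kida-tor} and used as a black box, so there is no proof here against which to compare your proposal.

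That said, your outline matches the standard strategy for such transitivity results and is consistent with how the paper \emph{uses} hexagons in $\calc_s(S_{1,3})$: the proof of Lemma~\ref{lem-13} takes for granted exactly the normal form you are aiming for, namely that a hexagon in $\calc_s(X)$ is defined by a 6-tuple $(a,b,c,d,e,f)$ with $a,c,e$ h-curves and $b,d,f$ p-curves cutting off the three distinct pairs of boundary components. So your target cyclic type sequence is correct. The genuine work, as you anticipate, is the exclusion of all other cyclic type sequences; one organizing observation that may help is that in $X=S_{1,3}$ any two disjoint h-curves are equal and any two disjoint p-curves of the same p-type are equal, which immediately forces the types to alternate h/p and the three p-curves to be of pairwise distinct p-types. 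For the closing step of the inductive alignment you should be explicit that once five vertices are matched the sixth is forced, which is essentially what Lemma~\ref{lem-13} establishes; you may want to invoke that lemma (or reprove its content) rather than leave the last step to a general change-of-coordinates argument.
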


We now present a hexagon in $\calt(S)$ of type 1.
Let $\alpha$ be a non-separating curve in $S$.
Note that $S_\alpha$ is homeomorphic to $S_{1, 3}$. 
We define a simplicial map
\[\lambda_\alpha \colon \calc_s(S_\alpha) \rightarrow \calt(S)\]
as follows. 
Pick $\beta \in V_s(S_{\alpha})$. 
If the two boundary components of $S_\alpha$ corresponding to $\alpha$ are contained in distinct components of $S_{\{\alpha, \beta \}}$, then we have $\{\alpha, \beta\}\in V_{bp}(S)$ and set $\lambda_{\alpha}(\beta)=\{ \alpha, \beta \}$.
Otherwise, we have $\beta \in V_s(S)$ and set $\lambda_{\alpha}(\beta)=\beta$.
The map $\lambda_{\alpha}$ is superinjective, that is, for any $\gamma, \delta \in V_s(S_{\alpha})$, we have $i(\lambda_{\alpha}(\gamma), \lambda_{\alpha}(\delta))=0$ if and only if $i(\gamma, \delta)=0$.
Sending a hexagon in $\calc_s(S_{\alpha})$ through $\lambda_{\alpha}$, we obtain a hexagon in $\calt(S)$ of type 1 as precisely described in Figure \ref{fig_hex_first}.

The following theorem says that any hexagon in $\calt(S)$ of type 1 can be obtained through the above procedure.

\begin{thm}\label{thm-type1}
The following assertions hold:
\begin{enumerate}
\item For any hexagon $\Pi$ in $\calt(S)$ of type 1, there exist a non-separating curve $\alpha$ in $S$ and a hexagon $\Pi_0$ in $\calc_s(S_{\alpha})$ with $\lambda_{\alpha}(\Pi_0)=\Pi$.
\item Any two hexagons in $\calt(S)$ of type 1 are sent to each other by an element of $\mod(S)$.
\end{enumerate}
\end{thm}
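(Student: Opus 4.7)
The plan is to prove Part (i) by extracting from the hexagon a non-separating curve common to both BP-vertices, and then to deduce Part (ii) from Part (i) by combining transitivity of $\mod(S)$ on non-separating curves with Theorem \ref{thm-13}.

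\textbf{Part (i).} Let $\Pi$ be given by $(a, b, c, d, e, f)$ with $a, c, d, e \in V_s(S)$ and $b, f \in V_{bp}(S)$. My plan is to locate a non-separating curve $\alpha \in b \cap f$, write $b = \{\alpha, b_1\}$ and $f = \{\alpha, f_1\}$, and verify that $\Pi_0 = (a, b_1, c, d, e, f_1)$ is a hexagon in $\calc_s(S_{\alpha})$ with $\lambda_{\alpha}(\Pi_0) = \Pi$. The standing topological input is the structure of BPs in $S = S_{2, 1}$: an Euler-characteristic count (using non-isotopy of the two curves) shows that every BP $\{p, q\}$ cuts $S$ into a pair of pants $X \cong S_{0, 3}$ with boundaries $p, q, \partial S$ and a subsurface $Y \cong S_{1, 2}$ with boundaries $p, q$; consequently, any h-vertex disjoint from this BP lies in $Y$ (since $X$ carries no essential separating curve) and cuts $Y$ into a handle $S_{1, 1}$ and a pair of pants along $p, q$. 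Applied to $\Pi$, this yields $a \subset Y_b \cap Y_f$, $c \subset Y_b$, $e \subset Y_f$, and realises $b$ and $f$ as pants decompositions of $S'_a \cong S_{1, 2}$ placing $a$ and $\partial S$ on different pants.

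\textbf{Key step.} Showing that $b$ and $f$ share a curve is the heart of the argument. My plan is to use the h-vertex $d$ (disjoint from $c, e$ yet intersecting $a, b, f$) as a probe. The rigidity observation is that two disjoint h-vertices of $S$ have disjoint handle sides: any essential separating curve inside $S_{1, 1}$ must be parallel to its boundary, hence equal to the boundary curve of the handle. Thus $H_d$ is disjoint from $H_c \cup H_e$, while $H_c \subset Y_b$ and $H_e \subset Y_f$ by the preceding paragraph. A case analysis on how $d$ meets the pants decompositions of $S'_a$ supplied by $b$ and $f$ (controlled by the non-vanishings $d \cap a$, $d \cap b$, $d \cap f$) then rules out the possibility that $\{p_b, q_b\}$ and $\{p_f, q_f\}$ share no curve, and identifies the shared curve, to be called $\alpha$. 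The same analysis will show $d \cap \alpha = \emptyset$, so that $d$ is a valid vertex of $\calc_s(S_{\alpha})$.

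\textbf{Reconstruction and Part (ii).} Once $\alpha$ is in hand, hexagon-adjacency with $b$ or $f$ forces $a, c, e$ to be disjoint from $\alpha$, and the key step gives the same for $d$. In $S_{\alpha} = S_{1, 3}$ the curves $b_1, f_1$ are separating and put the two $\alpha$-boundaries on opposite sides (so $\lambda_{\alpha}(b_1) = b$ and $\lambda_{\alpha}(f_1) = f$), whereas $a, c, d, e$ lie in $V_s(S)$ and are fixed by $\lambda_{\alpha}$; translating the hexagon adjacencies from $\calt(S)$ back to $\calc_s(S_{\alpha})$ produces the required $\Pi_0$. For (ii), apply (i) to write $\Pi_j = \lambda_{\alpha_j}(\Pi_j^0)$ for $j = 1, 2$. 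Since $\mod(S)$ acts transitively on non-separating curves, choose $\gamma_0 \in \mod(S)$ with $\gamma_0(\alpha_1) = \alpha_2$; the induced homeomorphism $S_{\alpha_1} \to S_{\alpha_2}$ sends $\Pi_1^0$ to a hexagon in $\calc_s(S_{\alpha_2}) = \calc_s(S_{1, 3})$. Theorem \ref{thm-13} then supplies $\gamma_1 \in \pmod(S_{\alpha_2})$ moving $\gamma_0(\Pi_1^0)$ to $\Pi_2^0$; lifting $\gamma_1$ to the stabiliser of $\alpha_2$ in $\mod(S)$ and composing with $\gamma_0$ yields the required element. The main obstacle throughout is the key step of (i)---locating the shared curve---where the delicate one-dimensional combinatorics of $\calt(S_{2, 1})$ come into play; the reconstruction and Part (ii) are routine once Theorem \ref{thm-13} is invoked.
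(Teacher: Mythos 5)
Your overall strategy coincides with the paper's: extract a non-separating curve $\alpha$ shared by the two BP-vertices $b$ and $f$, check that all six vertices are disjoint from $\alpha$, pull the hexagon back through $\lambda_{\alpha}$ into $\calc_s(S_{\alpha})$, and deduce (ii) from (i) by transitivity of $\mod(S)$ on non-separating curves together with Theorem \ref{thm-13} (the paper leaves exactly this deduction to the reader, and your version of it, including the lift of an element of $\pmod(S_{\alpha})$ to the stabilizer of $\alpha$, is fine). The preparatory structural facts you record (a BP disjoint from an h-curve lies entirely on the $S_{1,2}$-side and gives a pants decomposition of it; disjoint distinct h-curves have disjoint handles) are correct.

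However, there is a genuine gap precisely at what you yourself call the heart of the argument: the claim that $b$ and $f$ share a curve, and that this curve is disjoint from $d$, is only promised ("a case analysis \dots then rules out the possibility \dots and identifies the shared curve"), never carried out. This cannot be waved through: two BPs disjoint from the common h-curve $a$ are just two pants decompositions of the $S_{1,2}$-component of $S_a$, and generically such decompositions share no curve, so the shared curve must be forced by the remaining hexagon data in an essential way. The paper's proof of this step is a real geometric argument: it works in the non-handle component $R$ of $S_C$, shows $A\cap R$ consists of parallel non-separating arcs and $E\cap R$ of parallel separating arcs, proves these arcs can be realized disjointly by using the vertex $f$ (no curve in the non-pants component of $S_F$ is isotopic to $\partial S$), describes $B$ and $D$ as boundary curves of regular neighborhoods of (arc)$\cup\, C$, concludes that exactly one component of $B$ (and, by the symmetric argument with $S_E$, exactly one component of $F$) lies in the handle $H_d$, and finally identifies the two using the uniqueness of the curve in $H_d$ disjoint from the arcs $A\cap H_d$. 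Your probe facts --- $H_d$ disjoint from $H_c\cup H_e$, $H_c\subset Y_b$, $H_e\subset Y_f$, and the non-vanishing of $i(d,a)$, $i(d,b)$, $i(d,f)$ --- are not shown to exclude the no-shared-curve configuration, and nothing in your sketch plays the role of the step where $f$ and $\partial S$ are used to disjoin the relevant arcs; likewise the identification of $\alpha$ as the curve sitting in $H_d$ is asserted rather than derived. Until that case analysis is written out and verified, part (i), and hence part (ii), is not proved.
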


\begin{proof}
Assertion (ii) follows from assertion (i) and Theorem \ref{thm-13}.
To prove assertion (i), we pick a hexagon $\Pi$ in $\calt(S)$ of type 1.
Let $(a, b, c, d, e, f)$ be a 6-tuple defining $\Pi$ with $b$ and $f$ BP-vertices. 
We choose representatives $A,\ldots, F$ of $a,\ldots, f$, respectively, such that any two of them intersect minimally.

Let $R$ denote the component of $S_C$ that is not a handle. 
Since $B$ is a BP in $R$ and is disjoint from $A$, the intersection $A\cap R$ consists of mutually isotopic, essential simple arcs in $R$ which are non-separating in $R$ (see Figure \ref{fig-arc-pf} (a)).
%====================================
\begin{figure}
\begin{center}
\includegraphics[width=12cm]{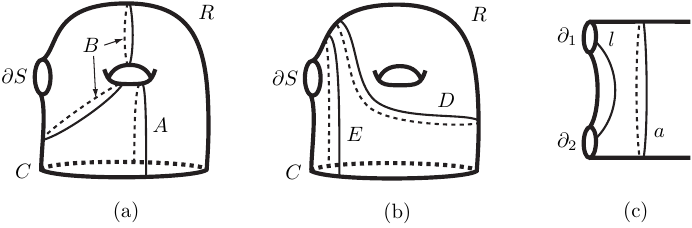}
\caption{}\label{fig-arc-pf}
\end{center}
\end{figure}
%====================================
Since $D$ is an h-curve in $R$ and is disjoint from $E$, the intersection $E\cap R$ consists of mutually isotopic, essential simple arcs in $R$ which are separating in $R$ (see Figure \ref{fig-arc-pf} (b)).
Let $l_1$ be a component of $A\cap R$, and let $l_2$ be a component of $E\cap R$.
If $l_1$ and $l_2$ could not be disjoint after isotopy which may move the end points of arcs, keeping them staying in $\partial R$, then the union of a subarc of $l_1$ and a subarc of $l_2$ would be a simple closed curve isotopic to $\partial S$. 
This is a contradiction because no simple closed curve in the component of $S_F$ that is not a pair of pants is isotopic to $\partial S$ as a curve in $S$. 
It thus turns out that $l_1$ and $l_2$ can be disjoint after isotopy. 
Note that $B$ consists of two boundary components of a regular neighborhood of $l_1\cup C$ in $R$, and that $D$ is a boundary component of a regular neighborhood of $l_2\cup C$ in $R$. 
It follows that exactly one component of $B$ is contained in the handle $Q$ cut off by $D$ from $S$.
We denote by $\alpha$ the isotopy class of that component of $B$.

Similarly, considering the component of $S_E$ that is not a handle, instead of that of $S_C$, we can show that exactly one component of $F$ is contained in $Q$. 
Let $\beta$ denote the isotopy class of that component of $F$. 
Since $B$ and $F$ are disjoint from $A\cap Q$, that consists of essential simple arcs in the handle $Q$, we have $\alpha=\beta$. 
We define two curves $b_1$, $f_1$ so that $b=\{ \alpha, b_1\}$ and $f=\{ \alpha, f_1\}$.
Any of $a$, $c$ and $e$ is disjoint from $\alpha$ because any of them is disjoint from $b$ or $f$.
The h-curve $d$ is also disjoint from $\alpha$ because $\alpha$ is the isotopy class of a curve in $Q$. 
The map $\lambda_\alpha$ sends the hexagon in $\calc_s(S_\alpha)$ defined by the 6-tuple $(a, b_1, c, d, e, f_1)$ to $\Pi$.
Assertion (i) is proved.
\end{proof}

Let $\cal{G}$ be a simplicial graph and $n$ a positive integer.
We mean by an {\it $n$-path} in $\cal{G}$ a subgraph of $\cal{G}$ obtained as the image of a simple path in $\cal{G}$ of length $n$ starting and terminating at vertices of $\cal{G}$.
In the rest of this section, we observe two hexagons in $\calt(S)$ of type 1 sharing a 3-path.

\begin{lem}\label{lem-hex1-hex1}
If $\Pi$ and $\Omega$ are hexagons in $\calt(S)$ of type 1 such that $\Pi \cap \Omega$ contains a 3-path, then we have $\Pi =\Omega$. 
\end{lem}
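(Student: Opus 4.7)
The plan is to invoke the preceding theorem, which associates to each type 1 hexagon $\Pi$ in $\calt(S)$ a non-separating curve $\alpha$ in $S$ and a hexagon $\Pi_0$ in $\calc_s(S_\alpha)\cong \calc_s(S_{1,3})$ with $\Pi=\lambda_\alpha(\Pi_0)$, and similarly $\Omega=\lambda_{\alpha'}(\Omega_0)$. The argument proceeds in two steps: first show $\alpha=\alpha'$, and then conclude $\Pi_0=\Omega_0$.

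For the first step, I would analyze the position of the shared 3-path inside a type 1 hexagon. The cyclic pattern of vertex types is $(h, BP, h, h, h, BP)$, so up to dihedral symmetry a 3-path falls into one of three cases: (I) pattern $(h, BP, h, h)$; (II) pattern $(BP, h, h, h)$; or (III) pattern $(h, BP, h, BP)$, containing both BPs. (A pattern of four h-vertices is impossible, since the longest run of consecutive h-vertices is three.) In case (III), the two BPs of $\Pi$ coincide with the two BPs of $\Omega$, and each of $\alpha, \alpha'$ is the unique common component of its own pair of BPs, so $\alpha = \alpha'$. In cases (I) and (II), the 3-path contains a consecutive subpath of type $(BP, h, h)$, which is precisely the data $(B, C, D)$ used in the proof of the preceding theorem: there, $\alpha$ is characterized as the unique component of the BP $B$ lying in the handle that the h-curve $D$ cuts off from $S$. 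Since this construction depends only on $B$ and $D$, and since the same construction inside $\Omega$ gives $\alpha'$, we obtain $\alpha=\alpha'$. One uses here the symmetric characterization of $\alpha$ from the proof of the preceding theorem (the same $\alpha$ is obtained from either BP-vertex together with the central h-vertex $v_4$), so the various dihedral matchings of the 3-path into $\Omega$ all yield the same answer.

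For the second step, once $\alpha=\alpha'$, both hexagons lie in the image of $\lambda_\alpha$. Since $\lambda_\alpha$ is superinjective and hence injective on vertices, the shared 3-path in $\calt(S)$ lifts to a shared 3-path of $\Pi_0$ and $\Omega_0$ in $\calc_s(S_{1,3})$. The problem thus reduces to the rigidity statement that two hexagons in $\calc_s(S_{1,3})$ sharing a 3-path coincide. This final rigidity step is what I expect to be the main obstacle. I would attack it by combining Theorem \ref{thm-13} (which says $\pmod(S_{1,3})$ acts transitively on hexagons in $\calc_s(S_{1,3})$) with an analysis of the pointwise stabilizer of a 3-path inside a fixed model hexagon, or alternatively by a direct topological argument based on the explicit picture of hexagons described in Figure \ref{fig_hex_first}.
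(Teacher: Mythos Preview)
Your proposal is correct and follows essentially the same route as the paper. The paper likewise splits into the cases where the shared 3-path contains one or two BP-vertices, and in the one-BP case it characterizes $\alpha$ exactly as you do: as the unique curve in the shared BP $b$ that is disjoint from the h-vertex $d$ two steps away (equivalently, lying in the handle cut off by $d$); since this depends only on $b$ and $d$, the same $\alpha$ works for $\Omega$. The ``main obstacle'' you anticipate---that two hexagons in $\calc_s(S_{1,3})$ sharing a 3-path must coincide---is precisely what the paper isolates as a separate lemma (Lemma~\ref{lem-13}) and proves by the direct topological argument you suggest: pass to the associated arcs, cut $S_{0,4}\subset S_{1,3}$ along the relevant arcs to obtain a disk, and observe that the remaining arc (and hence the remaining two vertices of the hexagon) is forced. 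Your alternative idea of analyzing the pointwise stabilizer of a 3-path via Theorem~\ref{thm-13} would also work but is less direct.
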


To prove this lemma, we make the following observation on hexagons in $\calc_s(S_{1, 3})$.

\begin{lem}\label{lem-13}
We set $X=S_{1, 3}$.
Let $H$ be a hexagon in $\calc_s(X)$.
Then for any 3-path $L$ in $H$, $H$ is the only hexagon in $\calc_s(X)$ containing $L$.
\end{lem}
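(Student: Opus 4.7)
The plan is to combine Theorem~\ref{thm-13} with a direct topological analysis of a concrete model hexagon. By Theorem~\ref{thm-13}, every hexagon in $\calc_s(X)$ lies in the $\pmod(X)$-orbit of the model hexagon $H_0$ of Figure~\ref{fig_hex_first}, defined by the 6-tuple $(a, b_1, c, d, e, f_1)$. So given any hexagon $H$ and any 3-path $L \subset H$, there is $g \in \pmod(X)$ with $g(H) = H_0$, and it suffices to prove the statement for the pair $(H_0, g(L))$. Equivalently, for each 3-path $L_0 \subset H_0$, we must show that any hexagon $H'$ in $\calc_s(X)$ containing $L_0$ equals $H_0$.

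There are only six 3-paths in $H_0$, and the dihedral symmetries of $H_0$ realized by elements of $\pmod(X)$ (visible as rotations and reflections of Figure~\ref{fig_hex_first} that permute the three boundary components of $X$ appropriately) collapse them into a short list of $\pmod(X)$-equivalence classes. This reduces the task to a finite case analysis, the representative case being $L_0 = (a, b_1, c, d)$. Writing $H' = (a, b_1, c, d, e', f')$, the hexagon axioms translate into
\[
i(e', d) = i(e', f') = i(f', a) = 0, \quad i(e', a), i(e', b_1), i(e', c), i(f', b_1), i(f', c), i(f', d) > 0.
\]
Fix representatives of all the curves in pairwise minimal position. Since $e'$ is a separating curve of $X$ disjoint from the separating curve $d$, its isotopy class is represented in a single complementary component of $d$ in $X$, which is of topological type $S_{1, 2}$ or $S_{0, 4}$ (according as $d$ cuts off a handle or a pair of pants). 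The further requirements that $e'$ essentially meet each of $a$, $b_1$, and $c$ then restrict $e'$ to a small explicit set of candidates in that component, and comparison with Figure~\ref{fig_hex_first} forces $e' = e$. A symmetric argument with the roles of $a$ and $d$ interchanged yields $f' = f_1$, and hence $H' = H_0$.

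The main obstacle will be the topological bookkeeping inside the complementary component of $d$: one must enumerate the essential separating curves of $X$ that lie in this component and meet $a$, $b_1$, $c$ in the prescribed fashion, and then rule out every candidate other than $e$. Since $X = S_{1, 3}$ has low complexity and its separating curves come in just two types (cutting off either a handle or a pair of pants), the enumeration is short but requires care, particularly in keeping track of which boundary components of $X$ lie in each complementary piece. The remaining inequivalent 3-paths $L_0 \subset H_0$ are treated by applying the same argument to the corresponding cyclic or reflected relabeling of $H_0$, completing the proof.
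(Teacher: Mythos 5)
Your reduction via Theorem~\ref{thm-13} to the model hexagon and to a single representative 3-path is fine (the paper does essentially the same thing, phrased as: it suffices to show that a hexagon is determined by four consecutive vertices $a,b,c,d$ with $a,c$ h-curves and $b,d$ p-curves), although note that symmetries of the model hexagon permuting the boundary components of $X$ lie in $\mod(X)$, not $\pmod(X)$. The real problem is the core step. You claim that the conditions ``$e'$ is separating, $i(e',d)=0$, and $e'$ intersects $a$, $b_1$, $c$'' restrict $e'$ to a small explicit set of candidates, and that a symmetric argument then gives $f'$. This is false: the set of such curves is infinite. For example, for all sufficiently large $|n|$ the curve $t_c^{\,n}(e)$ is an h-curve disjoint from $d$ (because $i(c,d)=0$, so $t_c$ fixes $d$), it intersects $b_1$ and $c$ (because $t_c$ fixes $b_1$ and $i(e,c)\neq 0$), and it intersects $a$ (since $i(t_c^{\,n}(e),a)\geq |n|\,i(c,e)\,i(c,a)-i(e,a)$); moreover these curves are pairwise distinct. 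The same failure occurs for $f'$: there are infinitely many p-curves disjoint from $a$ that intersect $b_1$, $c$, $d$. So neither unknown vertex is determined by its ``local'' constraints alone; the fifth and sixth vertices constrain each other through $i(e',f')=0$, and any correct proof must exploit this coupling.

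That coupling is exactly what the paper's argument supplies, and what your outline omits. The paper first pins down the p-vertex adjacent to $a$: applying Theorem~\ref{thm-13} to the new hexagon, the arcs associated to its three p-curves are pairwise disjoint, so the arc $l_{F'}$ of $f'$ is disjoint from $A$, $l_B$ and $l_D$ and joins $\partial_2$ to $\partial_3$; cutting $X$ along $A$, $l_B$ and $l_D\cap R$ yields a disk in which at most one such arc exists, whence $f'=f$. Only then is $e'$ forced, not by the list of curves it must meet, but because $d$ and $f$ intersect and at most one separating curve in $X$ is disjoint from both. As written, your plan of determining $e'$ alone and then $f'$ ``by symmetry'' cannot be completed, because the finite enumeration it relies on does not exist.
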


Before proving this lemma, we introduce terminology.
Let $Y=S_{g, p}$ be a surface with $p\geq 2$.
For an essential simple arc $l$ in $Y$ and two distinct components $\partial_1$, $\partial_2$ of $\partial Y$, we say that $l$ {\it connects $\partial_1$ and $\partial_2$} (or {\it connects $\partial_1$ with $\partial_2$}) if one of the end points of $l$ lies in $\partial_1$ and another in $\partial_2$.

Suppose either $g\geq 1$ and $p\geq 2$ or $g=0$ and $p\geq 5$.
There is a one-to-one correspondence between elements of $V_s(Y)$ whose representatives are p-curves in $Y$ and elements of $V_a(Y)$ whose representatives connect two distinct components of $\partial Y$.
More specifically, for any p-curve $a$ in $Y$, we have an essential simple arc in $Y$ contained in the pair of pants cut off by $a$ from $Y$ and connecting two distinct components of $\partial Y$, which uniquely exists up to isotopy.
Conversely, for any essential simple arc $l$ in $Y$ connecting two distinct components $\partial_1$, $\partial_2$ of $\partial Y$, we have the p-curve in $Y$ that is a boundary component of a regular neighborhood of $l\cup \partial_1\cup \partial_2$ in $Y$ (see Figure \ref{fig-arc-pf} (c)).

\begin{proof}[Proof of Lemma \ref{lem-13}]
Let $(a, b, c, d, e, f)$ be a 6-tuple defining $H$ such that $a$, $c$ and $e$ are h-curves in $X$ and $b$, $d$ and $f$ are p-curves in $X$.
To prove the lemma, it is enough to show that $H$ is the only hexagon in $\calc_s(X)$ containing $a$, $b$, $c$ and $d$.

Choose representatives $A,\ldots, F$ of $a,\ldots, f$, respectively, such that any two of them intersect minimally.
We can then find essential simple arcs $l_B$, $l_D$ and $l_F$ in $X$ such that
\begin{itemize}
\item for any $G\in \{ B, D, F\}$, the arc $l_G$ lies in the pair of pants cut off by $G$ from $X$, and connects two distinct components of $\partial X$;
\item the arcs $l_B$, $l_D$ and $l_F$ are pairwise disjoint; and
\item any of $A\cap l_D$, $C\cap l_F$ and $E\cap l_B$ consists of exactly two points
\end{itemize}
(see Figure \ref{fig-arc13-pf} (a)).
%====================================
\begin{figure}
\begin{center}
\includegraphics[width=9cm]{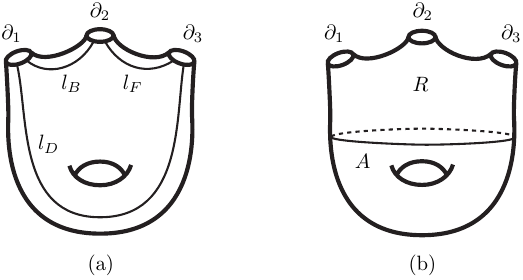}
\caption{}\label{fig-arc13-pf}
\end{center}
\end{figure}
%====================================
Label components of $\partial X$ as $\partial_1$, $\partial_2$ and $\partial_3$ so that $l_B$ connects $\partial_1$ and $\partial_2$ and $l_D$ connects $\partial_1$ and $\partial_3$.
Let $R$ denote the component of $X_A$ homeomorphic to $S_{0, 4}$, and let $\partial_4$ denote the component of $\partial R$ corresponding to $A$ (see Figure \ref{fig-arc13-pf} (b)). 
The intersection $l_D\cap R$ then consists of an arc connecting $\partial_1$ with $\partial_4$ and an arc connecting $\partial_3$ with $\partial_4$.
If we cut $R$ along $l_B$ and $l_D\cap R$, then we obtain a disk $K$ such that each of $\partial_2$ and $\partial_3$ corresponds to a single arc in $\partial K$.
It follows that up to isotopy, there exists at most one simple arc in $X$ connecting $\partial_2$ with $\partial_3$, meeting $\partial X$ only at its end points, and disjoint from $A$, $l_B$ and $l_D$.

We proved that any hexagon in $\calc_s(X)$ containing $a$, $b$, $c$ and $d$ contains $f$.
The lemma follows because $e$ is the only separating curve in $X$ disjoint from $d$ and $f$.
\end{proof}

\begin{proof}[Proof of Lemma \ref{lem-hex1-hex1}]
Let $\Pi$ and $\Omega$ be hexagons in $\calt(S)$ of type 1 such that $\Pi \cap \Omega$ contains a 3-path.
Let $(a, b, c, d, e, f)$ be a 6-tuple defining $\Pi$ with $b$ and $f$ BP-vertices.
We define $\alpha$ as the curve contained in $b$ and $f$.
The number of BP-vertices in $\Pi \cap \Omega$ is either 1 or 2.
If $\Pi \cap \Omega$ has two BP-vertices, then both $\Pi$ and $\Omega$ are hexagons in $\lambda_{\alpha}(\calc_s(S_{\alpha}))$, where $\lambda_{\alpha}\colon \calc_s(S_{\alpha})\rightarrow \calt(S)$ is the simplicial map defined right after Theorem \ref{thm-13}.
The equality $\Pi =\Omega$ holds by Lemma \ref{lem-13}.

Assuming that $\Pi \cap \Omega$ contains only one BP-vertex, we deduce a contradiction.
Without loss of generality, we may assume that $b$ is contained in $\Pi \cap \Omega$.
It then follows that $c$ and $d$ are also contained in $\Pi \cap \Omega$.
Since $\alpha$ is determined as the curve in $b$ disjoint from $d$, the two BPs in $\Omega$ share $\alpha$.
Both $\Pi$ and $\Omega$ are hexagons in $\lambda_{\alpha}(\calc_s(S_{\alpha}))$, and the equality $\Pi =\Omega$ holds by Lemma \ref{lem-13}.
This contradicts our assumption.
\end{proof}

%%%%%%%%%%%%%%%%%%%%%%%%%%%%%%%%%%%%%%%%

\section{Hexagons of type 2}\label{sec-type2}

Throughout this section, we set $S=S_{2, 1}$.
We say that a hexagon in $\calt(S)$ is of {\it type 2} if it is defined by a 6-tuple $(v_1,\ldots, v_6)$ such that $v_2$, $v_3$, $v_5$ and $v_6$ are h-vertices and $v_1$ and $v_4$ are BP-vertices.
We construct a hexagon of type 2 by gluing two pentagons in the Torelli complex of $S_{1, 3}$.

Let $\cal{G}$ be a simplicial complex.
We mean by a {\it pentagon} in $\cal{G}$ the full subcomplex of $\cal{G}$ spanned by five vertices $v_1,\ldots, v_5$ such that for any $j$ mod 5, $v_j$ and $v_{j+1}$ are adjacent, and $v_j$ and $v_{j+2}$ are not adjacent.
In this case, we say that the pentagon is defined by the 5-tuple $(v_1,\ldots, v_5)$.

Fix a non-separating curve $\delta$ in $S$, and let $X$ be the surface obtained by cutting $S$ along $\delta$, which is homeomorphic to $S_{1, 3}$.
Let $\partial_{\delta}^1$ and $\partial_{\delta}^2$ denote the two boundary components of $X$ corresponding to $\delta$.
%====================================
\begin{figure}
\begin{center}
\includegraphics[width=12cm]{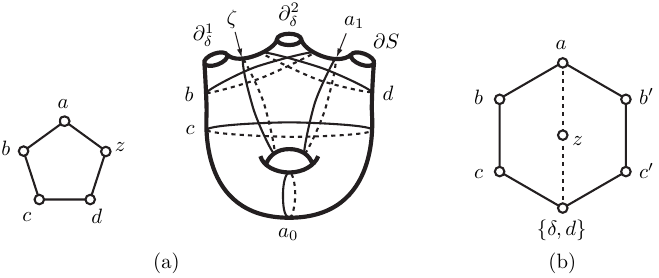}
\caption{}\label{fig-pen}
\end{center}
\end{figure}
%====================================
We have the pentagon in $\calt(X)$ defined by the 5-tuple $(a, b, c, d, z)$ in Figure \ref{fig-pen} (a), where we put $a=\{ a_0, a_1\}$ and $z=\{ a_0, \zeta \}$.

Fix a non-zero integer $m$, and put $b'=t_{\zeta}^m(b)$ and $c'=t_{\zeta}^m(c)$.
We then have the hexagon $\Pi$ in $\calt(S)$ defined by the 6-tuple $(a, b, c, \{ \delta, d\}, c', b')$, where $a$ and $\{ \delta, d\}$ are BPs in $S$, and $b$, $c$, $c'$ and $b'$ are h-curves in $S$ (see Figure \ref{fig-pen} (b)).
Note that $z$ is not a vertex of $\calt(S)$.
Let $n$ be a non-zero integer distinct from $m$, and put $b''=t_{\zeta}^n(b)$ and $c''=t_{\zeta}^n(c)$.
The hexagon in $\calt(S)$ defined by the 6-tuple $(a, b, c, \{ \delta, d\}, c'', b'')$ is distinct from $\Pi$ and shares a 3-path with $\Pi$.
This property is in contrast with Lemma \ref{lem-hex1-hex1} on hexagons of type 1.

The aim of this section is to show that any hexagon in $\calt(S)$ of type 2 can be obtained through this construction, and to describe the number of hexagons sharing a 3-path with a given hexagon of type 1 or type 2.
Uniqueness of the pentagon in $\calt(S_{1, 3})$ in Figure \ref{fig-pen} (a) is proved in the following:

\begin{lem}\label{lem-hex2-pen}
We set $X=S_{1, 3}$.
Then the following assertions hold:
\begin{enumerate}
\item Any pentagon in $\calt(X)$ having exactly two BP-vertices is defined by a 5-tuple $(v_1,\ldots, v_5)$ with $v_1$ and $v_5$ BP-vertices, $v_2$ and $v_4$ p-vertices, and $v_3$ an h-vertex.
\item Any two pentagons in $\calt(X)$ having exactly two BP-vertices are sent to each other by an element of $\mod(X)$.
\end{enumerate}
\end{lem}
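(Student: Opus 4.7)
My plan is to rely on two local observations about $\calt(X)$ with $X = S_{1,3}$; assertion (i) will follow quickly from them, and assertion (ii) will require an additional step and the change-of-coordinates principle.

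First I would prove that no BP-vertex of $\calt(X)$ is adjacent to any h-vertex. If an h-curve $w$ in $X$ were disjoint from a BP $\{a_1, a_2\}$, then $X \setminus w$ would split as a handle $H \cong S_{1,1}$ and a four-holed sphere $R \cong S_{0, 4}$; every essential simple closed curve in $R$ is separating in $X$, so both $a_i$ would lie in $H$ and hence both be isotopic to the core of the handle, forcing $a_1$ and $a_2$ to be isotopic --- contradicting the BP condition. Next I would prove that any two distinct disjoint p-curves in $X$ do not exist: for a disjoint pair $v_2, v_4$ of p-curves, $v_4$ must lie in the complement $Y \cong S_{1, 2}$ of the pair of pants cut off by $v_2$ (since this pair of pants has no essential curves); a direct analysis shows that any separating curve in $Y$ not parallel to $v_2$ cuts off a handle from $Y$ and hence becomes an h-curve, not a p-curve, in $X$.

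Given these two observations, assertion (i) follows easily. Consider a pentagon $(v_1, \ldots, v_5)$ in $\calt(X)$ with exactly two BP-vertices. If they were at cyclic distance $2$ in the pentagon --- WLOG at positions $1$ and $3$ --- the first observation would force $v_5$ (the non-BP neighbor of $v_1$) and $v_4$ (the non-BP neighbor of $v_3$) both to be p-vertices; but $v_4$ and $v_5$ are adjacent in the pentagon, contradicting the second observation. So the two BP-vertices are adjacent, WLOG at positions $1$ and $5$. The first observation then forces $v_2$ and $v_4$ to be p-vertices, and the second observation applied to the disjoint pair $v_2, v_3$ forces $v_3$ to be an h-vertex.

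For assertion (ii), I would apply the change-of-coordinates principle after establishing one further preliminary: any two disjoint BPs in $\calt(X)$ share a common curve. Otherwise, cutting $X$ along any one of the four underlying non-separating curves would leave three pairwise disjoint non-isotopic essential curves in $S_{0,5}$, exceeding the maximum of two allowed in a sphere with five boundary components. Applied to $v_1, v_5$, this yields a shared curve $a_0$; the constraint that $a_0$ be non-separating in $X$ and disjoint from both $v_2, v_4$ forces $a_0$ to be isotopic to the core of the handle cut off by $v_3$. The proof then proceeds in stages: first transport $v_3$ to a standard h-curve using transitivity of $\mod(X)$ on h-curves; next align the pair $(v_2, v_4)$ inside $R \cong S_{0, 4}$ using the stabilizer of $v_3$ in $\mod(X)$; and finally align the companion curves $a_1, \zeta$ of the two BPs by a further homeomorphism supported away from the already-fixed data. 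The main obstacle will be this last alignment step, where one must verify that after fixing $v_3$, $(v_2, v_4)$, and $a_0$, the remaining stabilizer in $\mod(X)$ still acts transitively on the allowed pairs $(a_1, \zeta)$ compatible with the pentagon relations.
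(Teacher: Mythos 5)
Your treatment of assertion (i) is correct and is essentially the paper's own argument: your two observations (no BP-vertex is adjacent to an h-vertex; no two distinct p-curves are disjoint) are exactly the link properties the paper asserts, and the parity argument on the pentagon is the same; supplying proofs of these observations, and of the fact that two disjoint distinct BPs in $S_{1,3}$ share a curve, is fine and consistent with the paper.

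The genuine gap is in assertion (ii), and it sits exactly where you place your ``main obstacle'' --- but it is larger than you indicate, because your stage of ``aligning $(v_2,v_4)$ inside $R\cong S_{0,4}$ using the stabilizer of $v_3$'' is already not a change-of-coordinates formality. A priori nothing in the pentagon conditions bounds $i(v_2,v_4)$: it could be $2$, $4$, $6,\dots$, and pairs with different intersection numbers lie in different orbits of $\mod(X)$, so one must first prove that the existence of the two BPs forces a single orbit of pairs $(v_2,v_4)$ (equivalently, pins down the whole configuration). The same issue recurs for the pair $(a_1,\zeta)$, and your phrase ``a homeomorphism supported away from the already-fixed data'' cannot work literally, since $a_1$ and $\zeta$ intersect $v_3$ (and $v_4$, $v_2$ respectively); the residual stabilizer is essentially generated by twists about $a_0$, $v_3$ and boundary components together with finite symmetries, and its transitivity on admissible pairs is precisely the unproved claim. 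The paper supplies this missing rigidity by cutting along the shared curve $a_0$ --- note that one must first show $a_0$ is disjoint from $v_3$, which needs the argument that $v_2\cup v_4$ fills the non-handle side of $v_3$ (your ``isotopic to the core of the handle'' is imprecise, as the handle has no canonical core, and you give no proof) --- and then proving that $\mod(S_{0,5})$ acts transitively on ordered $5$-tuples defining pentagons in $\calc(S_{0,5})$, via the correspondence between curves and arcs joining distinct boundary components, the fact (Korkmaz, Luo) that the five arcs can be realized disjointly, and cutting along them into two disks; the resulting map is then checked to preserve the two boundary circles coming from $a_0$, so it glues back to a mapping class of $X$. Some argument of this strength is the missing core of your proof of (ii).
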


To prove this lemma, we need uniqueness of pentagons in the one-dimensional complex $\calc(S_{0, 5})$.

\begin{lem}\label{lem-05}
We set $T=S_{0, 5}$.
Then for any two 5-tuples $(u_1,\ldots, u_5)$, $(v_1,\ldots, v_5)$ defining pentagons in $\calc(T)$, there exists an element $g$ of $\mod(T)$ with $g(u_j)=v_j$ for any $j=1,\ldots, 5$.
\end{lem}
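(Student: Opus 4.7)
The plan is to identify $\calc(T)$ with the Petersen graph and exploit its classical symmetry. Label the boundary components of $T = S_{0,5}$ by $1, \ldots, 5$. Every essential simple closed curve in $T$ separates $T$ into two planar pieces with $2$ and $3$ boundary components, respectively, so $V(T)$ is in natural bijection with the $2$-element subsets of $\{1, \ldots, 5\}$; write $c_A$ for the curve corresponding to $A$. Two distinct curves $c_A$ and $c_B$ can be realized disjointly in $T$ if and only if $A \cap B = \emptyset$, so $\calc(T)$ is isomorphic to the Petersen graph.

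Next I would invoke (or verify) two classical facts about the Petersen graph: its automorphism group is $S_5$ acting via the natural permutation action on $2$-subsets, and it contains exactly $12$ unordered $5$-cycles. Each unordered $5$-cycle yields $10$ ordered $5$-tuples defining a pentagon (five cyclic rotations times two orientations), giving $120$ such tuples altogether. Since the $S_5$-action on $2$-subsets is faithful, the induced action on these $5$-tuples is free; as $|S_5| = 120$, it is in fact simply transitive.

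Given two $5$-tuples $(u_1, \ldots, u_5)$ and $(v_1, \ldots, v_5)$ defining pentagons, the preceding step produces a unique $\sigma \in S_5$ with $\sigma(u_j) = v_j$ for all $j$. To finish, I would realize $\sigma$ as the boundary permutation of an orientation-preserving self-homeomorphism of $T$: the homomorphism $\mod(T) \to S_5$ given by the action on boundary components is surjective (any permutation of the boundary components of $S_{0,5}$ is realized by an orientation-preserving homeomorphism built from half-twists), and any such representative induces $\sigma$ on the $2$-subsets, hence on the corresponding curves. The element $g \in \mod(T)$ so obtained satisfies the conclusion of the lemma. The main thing to check carefully is the simple transitivity of $S_5$ on ordered $5$-cycles of the Petersen graph, after which the lift from a graph automorphism to a mapping class is routine.
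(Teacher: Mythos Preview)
Your identification of $\calc(T)$ with the Petersen graph is incorrect. The map sending a curve to the $2$-element subset $A\subset\{1,\dots,5\}$ of boundary components it cuts off is well-defined and surjective, but it is far from injective: $V(T)$ is infinite. Indeed, if $c$ and $d$ are curves in $T$ with $i(c,d)\neq 0$, then $t_d(c)$ and $c$ induce the same partition of $\partial T$ (since $t_d$ fixes each component of $\partial T$) yet are not isotopic. Equivalently, $\pmod(T)$ is infinite and acts on $V(T)$ preserving each fibre of the partition map, so each fibre is infinite. The paper itself uses elsewhere that already $S_{0,4}$ carries infinitely many isotopy classes of curves (the vertex set of the graph $\calf$), and $S_{0,4}$ sits inside $T$ as a subsurface.

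Consequently the counting argument collapses: there are infinitely many ordered pentagons in $\calc(T)$, not $120$, and the copy of $S_5$ coming from permutations of boundary components cannot be transitive on them. What survives from your outline is that the partition map sends each pentagon in $\calc(T)$ to a $5$-cycle in the Petersen graph, and $S_5$ acts transitively on ordered $5$-cycles there; but one must still show that any two pentagons in $\calc(T)$ lying over the \emph{same} Petersen $5$-cycle differ by an element of $\pmod(T)$. That is precisely the substantive step, and the paper carries it out by passing to the associated arcs $l_j$, isotoping them to be pairwise disjoint, verifying that the five boundary components $\partial_j$ they visit are all distinct, and then observing that cutting $T$ along $\bigcup_j l_j$ yields two disks with a fixed combinatorial boundary pattern, from which the desired homeomorphism is constructed by hand.
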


\begin{proof}
As noticed right before the proof of Lemma \ref{lem-13}, there is a one-to-one correspondence between isotopy classes of curves in $T$ and isotopy classes of essential simple arcs in $T$ connecting two distinct components of $\partial T$.
Let $(u_1,\ldots, u_5)$ be a 5-tuple defining a pentagon in $\calc(T)$.
For each $j=1,\ldots, 5$, let $l_j$ be an essential simple arc in $T$ corresponding to $u_j$.

We claim that for any $j$ mod $5$, $l_j$ and $l_{j+2}$ can be isotoped so that they are disjoint, and exactly one component of $\partial T$, denoted by $\partial_j$, contains a point of $\partial l_j$ and a point of $\partial l_{j+2}$ (see Figure \ref{fig-arc5} (a)).
%====================================
\begin{figure}
\begin{center}
\includegraphics[width=12cm]{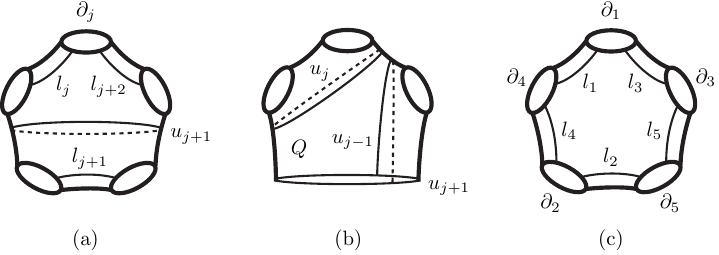}
\caption{}\label{fig-arc5}
\end{center}
\end{figure}
%====================================
Although this follows from \cite[Theorem 3.2]{kork-aut} or \cite[Lemma 4.2]{luo}, we give a proof for the reader's convenience.
Fix $j=1,\ldots, 5$.
The indices are regarded as numbers modulo $5$.
Let $Q$  be the component of $T_{u_{j+1}}$ homeomorphic to $S_{0, 4}$.
The curves $u_j$ and $u_{j+2}$ lie in $Q$.
Since $u_{j-1}$ is disjoint from $u_j$, the intersection $u_{j-1}\cap Q$ consists of mutually isotopic, essential simple arcs in $Q$ (see Figure \ref{fig-arc5} (b)).
Since $u_{j+3}$ is disjoint from $u_{j+2}$, the intersection $u_{j+3}\cap Q$ also consists of mutually isotopic, essential simple arcs in $Q$.
Any component of $u_{j-1}\cap Q$ and any component of $u_{j+3}\cap Q$ are not isotopic because otherwise we would have $u_j=u_{j+2}$.
Our claim then follows because $u_{j-1}$ and $u_{j+3}$ are disjoint.

We may therefore assume that $l_1,\ldots, l_5$ are mutually disjoint.
We next claim that $\partial_1,\ldots, \partial_5$ are mutually distinct.
For any $j$ mod 5, $\partial_j$ and $\partial_{j+1}$ are distinct because they are contained in the pairs of pants cut off by the curves $u_j$ and $u_{j+1}$, respectively, that are disjoint and distinct.
For any $j$ mod 5, $\partial_j$ and $\partial_{j+2}$ are distinct because they are contained in the pairs of pants cut off by the curves $u_j$ and $u_{j+4}$, respectively, that are disjoint and distinct.
The claim follows.

Let $(v_1,\ldots, v_5)$ be a 5-tuple defining a pentagon in $\calc(T)$.
For each $j=1,\ldots, 5$, we choose an essential simple arc $r_j$ in $T$ corresponding to $v_j$ so that $r_1,\ldots, r_5$ are mutually disjoint.
Applying an element of $\mod(T)$ to $(v_1,\ldots, v_5)$, we may assume that for any $j$ mod 5, $\partial_j$ contains a point of $\partial r_j$ and a point of $\partial r_{j+2}$.
Cutting $T$ along $\bigcup_{j=1}^5l_j$, we obtain two disks.
The boundary of each of those disks consists of arcs contained in
\[\partial_1,\ l_1,\ \partial_4,\ l_4,\ \partial_2,\ l_2,\ \partial_5,\ l_5,\ \partial_3,\ l_3,\]
along the boundary (see Figure \ref{fig-arc5} (c)).
The same property holds for the arcs $r_1,\ldots, r_5$.
We can thus find a homeomorphism of $T$ onto itself sending $\partial_j$ to itself and sending $l_j$ to $r_j$ for any $j=1,\ldots, 5$.
The lemma is proved.
\end{proof}

\begin{proof}[Proof of Lemma \ref{lem-hex2-pen}]
To prove assertion (i), we use the following properties:
\begin{enumerate}
\item[(1)] The link of any BP-vertex in $\calt(X)$ consists of BP-vertices and p-vertices.
\item[(2)] The link of any p-vertex in $\calt(X)$ consists of BP-vertices and h-vertices.
\end{enumerate}
Let $P$ be a pentagon in $\calt(X)$ with exactly two BP-vertices.
If the two BP-vertices of $P$ were not adjacent, then property (1) would imply that the other three vertices of $P$ are p-vertices.
We then have two adjacent p-vertices of $P$, and this contradicts property (2).
It follows that the two BP-vertices of $P$ are adjacent.
Properties (1) and (2) imply assertion (i).

To prove assertion (ii), we pick two pentagons $P$, $P'$ in $\calt(X)$ having exactly two BP-vertices.
Let $(a, b, c, d, e)$ be a 5-tuple defining $P$ with $a$ and $e$ BP-vertices.
Let $(a', b', c', d', e')$ be a 5-tuple defining $P'$ with $a'$ and $e'$ BP-vertices.
Any two distinct and disjoint BPs in $X$ have a common curve in $X$.
Let $\alpha$ be the curve in $a\cap e$.
We define curves $a_1$ and $e_1$ in $X$ so that $a=\{ \alpha, a_1\}$ and $e=\{ \alpha, e_1\}$.
We may assume that $\alpha$ is also the curve in $a'\cap e'$ after applying an element of $\pmod(X)$ to $P'$.
We define curves $a_1'$ and $e_1'$ in $X$ so that $a'=\{ \alpha, a_1'\}$ and $e'=\{ \alpha, e_1'\}$.
The two p-curves $b$ and $d$ fill the component of $S_c$ homeomorphic to $S_{0, 4}$.
Since $\alpha$ is disjoint from $b$ and $d$, the curve $\alpha$ is disjoint from $c$.
Similarly, $\alpha$ is disjoint from $b'$, $c'$ and $d'$.

Let $X_{\alpha}$ be the surface obtained by cutting $X$ along $\alpha$, which is homeomorphic to $S_{0, 5}$.
Each of the 5-tuples $(a_1, b, c, d, e_1)$ and $(a_1', b', c', d', e_1')$ defines a pentagon in $\calc(X_{\alpha})$.
By Lemma \ref{lem-05}, we obtain an element $g$ of $\mod(X_{\alpha})$ sending $(a_1, b, c, d, e_1)$ to $(a_1', b', c', d', e_1')$.
The two boundary components of $X_{\alpha}$ corresponding to $\alpha$ lie in the pair of pants cut off by $c$ from $X_{\alpha}$ and in that cut off by $c'$ from $X_{\alpha}$.
The equality $g(c)=c'$ implies that $g$ preserves those two boundary components of $X_{\alpha}$.
Assertion (ii) follows.
\end{proof}

We now present several properties of hexagons in $\calt(S)$ of type 2.
Let $\bar{S}$ denote the closed surface obtained by attaching a disk to $\partial S$.
Let $\pi \colon \calc(S)\to \calc(\bar{S})$ be the simplicial map associated with the inclusion of $S$ into $\bar{S}$.
The map $\pi$ sends any BP in $S$ to a non-separating curve in $\bar{S}$.

\begin{lem}\label{lem-hex2-pi}
Let $(a, b, c, d, e, f)$ be a 6-tuple defining a hexagon in $\calt(S)$ of type 2 with $a$ and $d$ BP-vertices.
Then the equalities $\pi(b)=\pi(c)$ and $\pi(e)=\pi(f)$ hold, and $\pi(a)$ and $\pi(d)$ are disjoint and distinct.
\end{lem}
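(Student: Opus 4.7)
The plan is to prove the three assertions by a direct geometric analysis of how h-curves and BPs sit inside $S=S_{2,1}$ relative to $\partial S$, and then to read off what happens after capping $\partial S$ to obtain $\bar S=S_{2,0}$.

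For $\pi(b)=\pi(c)$, I first want to describe how any two disjoint h-curves in $S$ look. Choose disjoint representatives $B,C$ of $b,c$. Then $S\setminus B$ has two components: a handle $H_B\cong S_{1,1}$ (which contains no essential separating curve) and $\tilde L_B\cong S_{1,2}$ containing $\partial S$. Since $C$ is separating in $S$, we must have $C\subset \tilde L_B$. Now in $S_{1,2}$ an essential separating curve cuts off a pair of pants from a handle, and since $C$ is an h-curve in $S$, the pair of pants in question has boundary $\{B,C,\partial S\}$ (the other case would make $C$ non-separating in $S$). Capping $\partial S$ turns this pair of pants into an annulus bounded by $B,C$, so $B$ and $C$ are isotopic in $\bar S$, giving $\pi(b)=\pi(c)$. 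The same argument applied to $\{e,f\}$ gives $\pi(e)=\pi(f)$. Set $\sigma_1:=\pi(b)=\pi(c)$. At the same time this computation shows $\bar S=H_B\cup_{\sigma_1}H_B'$ where $H_B'$ is the handle obtained from $\tilde L_B$ by capping $\partial S$ (equivalently, the handle containing $H_C$).

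Next I would localize $a$ with respect to $B$. Because $a$ is a BP, it bounds a pair of pants $P_a$ containing $\partial S$. The h-curve $B$ is disjoint from $a$; it cannot sit inside $P_a$ (a pair of pants has no essential non-boundary-parallel curves), so $B$ lies on the $S_{1,2}$-side of $a$. Consequently $P_a$ is disjoint from $B$, is connected, and contains $\partial S$, so it lies in the component of $S\setminus B$ that contains $\partial S$, namely $\tilde L_B$. Therefore both boundary curves $a_0,a_1$ of $P_a$ lie in $\tilde L_B$. The symmetric argument, using that $d$ is disjoint from $c$, shows that both curves of $d$ lie in $\tilde L_C$.

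Finally I transfer this to $\bar S$. Since $a_0\subset \tilde L_B$, its image $\pi(a_0)=\pi(a)$ admits a representative in $H_B'$, i.e.\ on the $H_C$-side of $\sigma_1$. Symmetrically, $\pi(d)$ admits a representative on the $H_B$-side of $\sigma_1$. Thus $\pi(a)$ and $\pi(d)$ have disjoint representatives lying in the two distinct components of $\bar S\setminus\sigma_1$, so $i(\pi(a),\pi(d))=0$, and they cannot be isotopic because two non-separating curves lying in different components of the complement of a separating curve represent different vertices of $\calc(\bar S)$. I expect the only delicate point to be step two—ruling out that $a_0,a_1$ lie on opposite sides of $B$—and the argument via the location of the pair of pants $P_a$ and of $\partial S$ is the cleanest way to dispatch it; an alternative would invoke the fact that two isotopic curves in $\bar S$ cannot be separated by a separating curve in $\bar S$ such as $\sigma_1$.
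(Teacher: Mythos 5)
Your argument is correct and follows essentially the same route as the paper's proof: cap off $\partial S$, observe that the disjoint h-curves $B, C$ (resp.\ $E, F$) cobound a pair of pants with $\partial S$ and hence become isotopic in $\bar{S}$, localize the BPs $a$ and $d$ on the $\partial S$-sides of $B$ and of $C$ respectively, and conclude that $\pi(a)$ and $\pi(d)$ lie in distinct components of $\bar{S}_{\pi(b)}$, hence are disjoint and distinct. The one step you leave implicit---that since $B$ and $C$ are isotopic in $\bar{S}$ the curves of $a$ (or of $d$) can be pushed across the annulus between them so as to become disjoint from a single fixed representative of $\pi(b)=\pi(c)$, which is needed because $a$ and $d$ themselves intersect in $S$---is exactly the isotopy the paper makes explicit, and it is routine.
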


\begin{proof}
The first two equalities hold because any of $b$, $c$, $e$ and $f$ are h-vertices, $b$ and $c$ are adjacent, and $e$ and $f$ are adjacent.
Let $A$, $B$, $C$ and $D$ be representatives of $a$, $b$, $c$ and $d$, respectively, such that any two of them intersect minimally.
We identify a curve in $S$ with a curve in $\bar{S}$ through the inclusion of $S$ into $\bar{S}$.
Let $H$ denote the handle cut off by $C$ from $\bar{S}$ and containing $\partial S$.
Let $I$ denote another handle cut off by $C$ from $\bar{S}$.
The BP $A$ lies in the handle cut off by $B$ from $S$ and containing $\partial S$.
This handle contains $I$, and the two curves $B$ and $C$ are isotopic in $\bar{S}$.
It follows that in $\bar{S}$, the two curves in $A$ can be isotoped into curves in $I$.
On the other hand, the BP $D$ lies in $H$.
It turns out that $\pi(a)$ and $\pi(d)$ lie in distinct components of $\bar{S}_{\pi(c)}$.
In particular, $\pi(a)$ and $\pi(d)$ are disjoint and distinct.
\end{proof}

\begin{lem}\label{lem-a0d0}
Let $(a, b, c, d, e, f)$ be a 6-tuple defining a hexagon in $\calt(S)$ of type 2 with $a$ and $d$ BP-vertices.
Then there exist a curve $a_0$ in $a$ and a curve $d_0$ in $d$ such that each of $a_0$ and $d_0$ is disjoint from any of $a,\ldots, f$, and the surface obtained by cutting $S$ along $a_0\cup d_0$ is homeomorphic to $S_{0, 5}$.
\end{lem}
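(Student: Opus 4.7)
The plan is to exhibit representatives $a_0 \in a$ and $d_0 \in d$ lying in specific handles of $S$, and to verify the required properties using Lemma~\ref{lem-hex2-pi}. Let $\Delta \subset \bar{S}$ denote the disk capping off $\partial S$. First I would choose representatives $A, B, C, D, E, F$ of $a, \ldots, f$ with pairwise minimal intersection, and write $A = A_0 \cup A_1$, $D = D_0 \cup D_1$ for the component curves of each BP. Since $\pi(b) = \pi(c)$, the h-curves $B$ and $C$ are isotopic in $\bar{S}$, cobounding there an annulus that contains $\Delta$; in $S$ this yields a pair of pants $P_{BC}$ with boundaries $B, C, \partial S$. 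Similarly $E, F, \partial S$ cobound a pair of pants $P_{EF}$. Writing $H_B, H_C$ (resp.\ $H_E, H_F$) for the handles on the non-$\partial S$ sides, we have decompositions
\[S = H_B \cup P_{BC} \cup H_C = H_F \cup P_{EF} \cup H_E.\]
Since $A$ is a BP disjoint from $B$ and no handle contains a BP, we have $A \subset K_B := P_{BC} \cup H_C$; analogously $A \subset K_F$ and $D \subset K_C \cap K_E$.

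By Lemma~\ref{lem-hex2-pi} and its proof, in $\bar{S}$ the curve $\pi(a)$ admits representatives inside both $H_C$ and $H_E$, and is disjoint from $\pi(c)$ and $\pi(e)$. Consequently I would take a single representative of $\pi(a)$ lying in the interior of $H_C \cap H_E$ and let $N$ be an annular neighborhood of this representative inside $H_C \cap H_E$. The two boundary circles of $N$ are parallel to $\pi(a)$ and, since $N$ does not meet $\Delta$, lie on the same side of $\Delta$ in $\bar{S}$. On the other hand, $A_0$ and $A_1$ cobound in $\bar{S}$ an annulus containing $\Delta$ and thus lie on opposite sides of $\Delta$. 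Hence exactly one of $A_0, A_1$, which I designate as $a_0$, is on the same side of $\Delta$ as $\partial N$, and can therefore be isotoped within $S$ into $H_C \cap H_E$. Symmetrically I would define $d_0 \in D$ as the curve isotopic in $S$ into $H_B \cap H_F$.

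With $a_0 \subset H_C \cap H_E$, disjointness of $a_0$ from the six vertices is immediate: from $a$ since $a_0$ is a component of $A$; from $b$ and $c$ since $a_0$ is interior to $H_C$, whose boundary is $C$ and which is disjoint from $B$; from $d$ since $D \subset K_C$ is disjoint from the interior of $H_C$; and from $e$ and $f$ by the symmetric argument using $H_E$. The same reasoning gives $d_0$ disjoint from all six vertices, and $a_0 \cap d_0 = \emptyset$ because $H_B \cap H_C = \emptyset$ in $S$. For the topological identification, cutting the decomposition $S = H_B \cup P_{BC} \cup H_C$ along $a_0 \subset H_C$ and $d_0 \subset H_B$ (each non-separating inside its handle, since neither is boundary-parallel there) turns each handle into a pair of pants; the result is three pairs of pants glued along $B$ and $C$, with Euler characteristic $-3$ and five boundary components, hence homeomorphic to $S_{0, 5}$. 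The main technical point is the compatibility claim in the second paragraph --- that a single representative of $\pi(a)$ can be placed simultaneously inside $H_C$ and $H_E$ --- which ensures that the ``$H_C$-side'' curve of the BP $A$ coincides with its ``$H_E$-side'' curve; this rests on $\pi(a)$ being disjoint from both $\pi(c)$ and $\pi(e)$ in $\bar{S}$ and lying on the non-$\Delta$ side of each.
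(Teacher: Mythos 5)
Your overall architecture is sound: the decompositions $S=H_B\cup P_{BC}\cup H_C=H_F\cup P_{EF}\cup H_E$, the observation that a component of $A$ realized in $\mathrm{int}(H_C\cap H_E)$ is automatically disjoint from all six vertices, and the final Euler-characteristic computation identifying the cut surface as $S_{0,5}$ are all correct. The gap is at the one step that carries the entire weight of the lemma: the passage from ``$\pi(a)$ has a representative $\alpha^*$ inside $H_C\cap H_E$'' to ``one of the actual components $A_0,A_1$ of the given BP can be isotoped in $S$ into $H_C\cap H_E$.'' Your justification rests on a dichotomy of ``sides of $\Delta$'': the components of $\partial N$ are on the same side, $A_0$ and $A_1$ are on opposite sides, hence one of $A_0,A_1$ is $S$-isotopic to $\partial N$. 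But $\Delta$ is a disk and does not divide $\bar{S}$ into two sides; the correct invariant here is the $S$-isotopy class within the fiber $\pi^{-1}(\pi(a))$, and by Theorem \ref{thm-tree} (and the remark that $\calc_s(S_{2,1})$ is a union of $\aleph_0$-regular trees) this fiber contains infinitely many $S$-isotopy classes, not two. The curve $\partial N\subset H_C\cap H_E$ is some vertex of this infinite tree, and nothing in your argument forces it to be $[A_0]$ or $[A_1]$: a priori $a$ could be a BP, disjoint from $b$ and $f$, whose $\pi$-image is isotopic into $H_C\cap H_E$ while both of its components are point-pushed versions that essentially intersect $C$. Indeed, if your inference were valid it would apply to any BP disjoint from $B$ and $F$ with the right $\pi$-image, which is false; what rules out the bad configurations is the full hexagon structure, which your argument never invokes beyond the $\pi$-level data.

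This is precisely the point where the paper's proof does its real work: it fixes the component $R$ of $S_B$ homeomorphic to $S_{1,2}$, analyzes the arc systems $D\cap R$, $E\cap R$, $F\cap R$, splits into cases according to whether $E\cap R$ has a separating component, and uses the minimal-position criteria of \cite{flp} (Expos\'e 3, Propositions 10 and 12) to locate, by explicit pictures, the unique component of $A$ disjoint from the relevant arcs. Some such arc-level (or equivalently, fiber-tree-level) argument is unavoidable; you would need to either reproduce it or find another way to pin down which vertex of $\pi^{-1}(\pi(a))$ the curve $\partial N$ represents. A secondary, more repairable issue: to place a single representative of $\pi(a)$ in $H_C\cap H_E$ you need $C$ and $E$ to be in minimal position in $\bar{S}$, which does not follow from their being in minimal position in $S$; this can be fixed by re-choosing representatives, but it should be said.
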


\begin{proof}
Choose representatives $A,\ldots, F$ of $a,\ldots, f$, respectively, such that any two of them intersect minimally. 
Let $R$ denote the component of $S_B$ homeomorphic to $S_{1, 2}$.
Since $A$ is a BP in $R$ and is disjoint from $F$, the intersection $F\cap R$ consists of mutually isotopic, essential simple arcs in $R$ which are non-separating in $R$.
Let $l_F$ be a component of $F\cap R$.
Since $C$ is an h-curve in $R$ and is disjoint from $D$, the intersection $D\cap R$ consists of mutually isotopic, essential simple arcs in $R$ which are separating in $R$.
Let $l_D$ be a component of $D\cap R$.
We find a desired curve $a_0$ in the following two cases individually: (1) There exists a component of $E\cap R$ which is separating in $R$.
(2) There exists no component of $E\cap R$ that is separating in $R$.

In case (1), since $E\cap R$ is disjoint from $D\cap R$, any component of $E\cap R$ that is separating in $R$ is isotopic to $l_D$.
Let $l_E^0$ be a component of $E\cap R$ which is separating in $R$.
Since $E\cap R$ is disjoint from $F\cap R$, the arc $l_E^0$ is disjoint from $l_F$.
As drawn in Figure \ref{fig-arc} (a), we can find the unique component of $A$ disjoint from $l_E^0$.
%====================================
\begin{figure}
\begin{center}
\includegraphics[width=12cm]{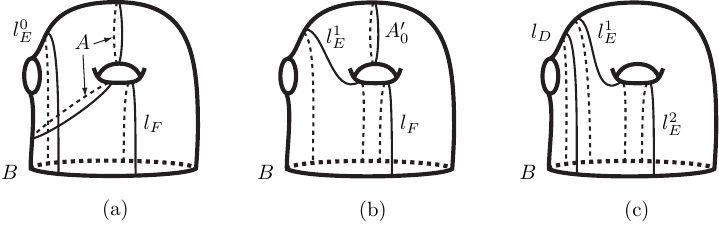}
\caption{}\label{fig-arc}
\end{center}
\end{figure}
%====================================
Let $a_0$ be the isotopy class of that component of $A$.
The curve $a_0$ is disjoint from any of $b$, $d$ and $f$.
Since $C$ is a boundary component of a regular neighborhood of $B\cup D$, the curve $a_0$ is disjoint from $c$.
Similarly, since $E$ is a boundary component of a regular neighborhood of $D\cup F$, the curve $a_0$ is disjoint from $e$.

In case (2), $E\cap R$ consists of essential simple arcs in $R$ which are non-separating in $R$.
Let $\bar{S}$ be the closed surface obtained from $S$ by attaching a disk to $\partial S$.
We identify a curve in $S$ with a curve in $\bar{S}$ through the inclusion of $S$ into $\bar{S}$.
Let $\bar{R}$ denote the component of $\bar{S}_B$ containing $\partial S$.
Since any component of $E\cap R$ is non-separating in $R$, the two curves $B$ and $E$ intersect minimally even as curves in $\bar{S}$, by the criterion on minimal intersection in \cite[Expos\'e 3, Proposition 10]{flp}.
Similarly, $B$ and $F$ also intersect minimally even as curves in $\bar{S}$.
The two curves $E$ and $F$ are isotopic in $\bar{S}$ because they are disjoint h-curves in $S$.
By \cite[Expos\'e 3, Proposition 12]{flp}, there exists a homeomorphism of $\bar{S}$ onto itself isotopic to the identity, fixing $B$ as a set and sending $E\cap R$ to $F\cap R$.
Any component of $E\cap R$ is thus isotopic to $l_F$ in $\bar{R}$.

If any component of $E\cap R$ were isotopic to $l_F$ in $R$, then $e$ and $a$ would be disjoint.
This is a contradiction.
There thus exists a component of $E\cap R$ which is not isotopic to $l_F$ in $R$.
Let $l_E^1$ be such a component of $E\cap R$.

Assuming that there exists no component of $E\cap R$ isotopic to $l_F$ in $R$, we deduce a contradiction.
Any component of $E\cap R$ is then isotopic to $l_E^1$.
Note that if $r_1$ and $r_2$ are non-separating arcs in $R$ which are disjoint and non-isotopic in $R$, but are isotopic in $\bar{R}$, then there exists a homeomorphism of $R$ onto itself sending $r_1$ and $r_2$ to $l_E^1$ and $l_F$, respectively.
It follows that as drawn in Figure \ref{fig-arc} (b), there exists a non-separating curve $A_0'$ in $R$ which is disjoint from $l_F$ and $E\cap R$ and is a boundary component of a regular neighborhood of $l_F\cup B$ in $R$.
This curve $A_0'$ is isotopic to a component of $A$.
There exists a path in $R$ connecting a point of $A_0'$ with a point of $l_F$ without touching $E\cap R$ because any component of $E\cap R$ is isotopic to $l_E^1$.
This contradicts the following:

\begin{claim}
Let $\alpha$ be a BP in $S$.
Let $\beta$ and $\gamma$ be h-curves in $S$ such that each of $\{ \alpha, \beta \}$ and $\{ \beta, \gamma \}$ is an edge of $\calt(S)$.
If a curve $\alpha_0$ in the BP $\alpha$ is disjoint from $\gamma$, then $\alpha_0$ lies in the handle cut off by $\gamma$ from $S$.
In particular, there exists no path in $S$ connecting a point of $\alpha_0$ with a point of $\beta$ without touching $\gamma$.
\end{claim}

\begin{proof}
Let $\alpha_0$ be a curve in the BP $\alpha$ disjoint from $\gamma$.
If $\alpha_0$ were not in the handle cut off by $\gamma$ from $S$, then $\alpha_0$ would be in the handle cut off by $\beta$ from $S$ because $\alpha_0$ is disjoint from $\beta$.
On the other hand, any BP in $S$ disjoint from $\beta$ is in the component of $S_{\beta}$ that is not a handle.
This is a contradiction.
\end{proof}

We have therefore proved that there exists a component of $E\cap R$ isotopic to $l_F$ in $R$.
Let $l_E^2$ be a component of $E\cap R$ isotopic to $l_F$ in $R$.
Cutting $R$ along $l_E^1\cup l_E^2$, we obtain two annuli, one of which contains $\partial S$.
The arc $l_D$ lies in the annulus containing $\partial S$ because $l_D$ is disjoint from $E\cap R$ (see Figure \ref{fig-arc} (c)).
We have the unique component of $A$ isotopic to a curve lying in another annulus.
Let $a_0$ be the isotopy class of that component of $A$.
The curve $a_0$ is disjoint from any of $b$, $d$ and $f$, and is thus disjoint from any of $a,\ldots, f$.

We obtained a curve $a_0$ in $a$ disjoint from any of $a,\ldots, f$ in both cases (1) and (2).
By symmetry, we can also find a curve $d_0$ in $d$ disjoint from any of $a,\ldots, f$.
By Lemma \ref{lem-hex2-pi}, $\pi(a_0)$ and $\pi(d_0)$ lie in distinct components of $\bar{S}_{\pi(b)}$.
It turns out that $a_0$ and $d_0$ are distinct, and the surface obtained by cutting $S$ along $a_0\cup d_0$ is homeomorphic to $S_{0, 5}$. 
\end{proof}

Let $X$ be a surface.
For a BP $b$ in $X$ and a boundary component $\partial$ of $X$, we say that $b$ {\it cuts off} $\partial$ if $b$ cuts off a pair of pants containing $\partial$ from $X$.
For two distinct boundary components $\partial_1$, $\partial_2$ of $X$ and a p-curve $\alpha$ in $X$, we say that $\alpha$ {\it cuts off} $\partial_1$ and $\partial_2$ if $\alpha$ cuts off a pair of pants containing $\partial_1$ and $\partial_2$ from $X$.

\begin{lem}\label{lem-zeta}
Let $(a, b, c, d, e, f)$ be a 6-tuple defining a hexagon in $\calt(S)$ of type 2 with $a$ and $d$ BP-vertices.
Let $a_0$ and $d_0$ be the curves obtained in Lemma \ref{lem-a0d0}.
Then there exists a non-separating curve $\zeta$ in $S$ satisfying the following three conditions:
\begin{enumerate}
\item[(a)] The curve $\zeta$ is disjoint from $a$ and $d$, and belongs to neither $a$ nor $d$.
\item[(b)] Let $S_{d_0}$ denote the surface obtained by cutting $S$ along $d_0$, which is homeomorphic to $S_{1, 3}$.
The pair $\{ a_0, \zeta \}$ is then a BP in $S_{d_0}$, and cuts off one of the two boundary components of $S_{d_0}$ corresponding to $d_0$.
\item[(c)] The condition obtained by exchanging $a_0$ and $d_0$ in condition (b) holds.
\end{enumerate}
Moreover, such a curve $\zeta$ uniquely exists up to isotopy.
\end{lem}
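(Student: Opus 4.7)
The plan is to work inside the surface $R := S_{a_0 \cup d_0}$, which by Lemma \ref{lem-a0d0} is homeomorphic to $S_{0,5}$, and to translate every relevant condition into a condition on partitions of the five boundary components of $R$. I label these boundary components as $\partial_0$ (coming from $\partial S$), $\partial_{a_0}^1, \partial_{a_0}^2$ (coming from $a_0$), and $\partial_{d_0}^1, \partial_{d_0}^2$ (coming from $d_0$). Since $R$ is planar, essential simple closed curves in $R$ correspond bijectively to partitions of these five components into two subsets of size at least two, and two such curves are disjoint in $R$ if and only if the associated partitions are non-crossing.

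First I write $a = \{a_0, a_1\}$ and $d = \{d_0, d_1\}$. The curves $a_1$ and $d_1$ are disjoint from $a_0 \cup d_0$ and hence embed in $R$. The BP condition on $\{a_0, a_1\}$ forces $a_1$ to separate $\partial_{a_0}^1$ from $\partial_{a_0}^2$ (so the regluing of $a_0$ produces a non-separating curve) while keeping $\partial_{d_0}^1$ and $\partial_{d_0}^2$ on the same side (so that $S \setminus (a_0 \cup a_1)$ remains disconnected after the regluing of $d_0$). Combining these constraints with essentiality of $a_1$ in $R$ pins down its partition as $\{\{\partial_{a_0}^{\alpha}, \partial_0\}, \{\partial_{a_0}^{3-\alpha}, \partial_{d_0}^1, \partial_{d_0}^2\}\}$ for some $\alpha \in \{1,2\}$. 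Symmetrically, $d_1$'s partition is $\{\{\partial_{d_0}^{\beta}, \partial_0\}, \{\partial_{d_0}^{3-\beta}, \partial_{a_0}^1, \partial_{a_0}^2\}\}$ for some $\beta \in \{1,2\}$.

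Next I analyze conditions (b) and (c) on $\zeta$. Any $\zeta$ satisfying (a) is a simple closed curve in $R$. Condition (b) demands that one component of $R \setminus \zeta$ is a pair of pants with boundary $\{\zeta, \partial_{d_0}^i, X\}$, where $X$ is some other boundary component of $R$. A short case analysis on $X$ shows that the choices $X = \partial_0$ and $X = \partial_{d_0}^{3-i}$ both force $\zeta$ to be separating in $S_{d_0}$ after regluing $a_0$, contradicting that $\{a_0, \zeta\}$ is a BP; hence $X = \partial_{a_0}^k$ for some $k \in \{1,2\}$. This constrains $\zeta$'s partition to the symmetric form $\{\{\partial_{a_0}^k, \partial_{d_0}^i\}, \{\partial_{a_0}^{3-k}, \partial_{d_0}^{3-i}, \partial_0\}\}$, and condition (c) now follows automatically by the symmetry under $a_0 \leftrightarrow d_0$.

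Finally, I invoke the remaining part of condition (a), namely disjointness of $\zeta$ from $a_1$ and $d_1$. Comparing the above partition of $\zeta$ with the partitions of $a_1$ and $d_1$ via the non-crossing criterion is a routine check that forces $k = 3-\alpha$ and $i = 3-\beta$, so the indices are uniquely determined. This gives uniqueness, and a direct verification shows that the resulting curve is non-separating in $S$ and meets (a), (b), (c). I expect the main obstacle to be the bookkeeping of boundary labels when shuttling the BP and pair-of-pants conditions between $S$, $S_{d_0}$ (or $S_{a_0}$), and $R$; once that translation is set up, everything reduces to elementary combinatorics of partitions of a 5-element set.
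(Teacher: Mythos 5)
Your central premise is false: in the planar surface $R\cong S_{0,5}$, isotopy classes of essential simple closed curves are \emph{not} in bijection with partitions of the five boundary components, and disjointness of two curves is not detected by a non-crossing condition on their partitions. Infinitely many pairwise non-isotopic curves induce the same partition (they differ, for instance, by Dehn twists about other curves in $R$), and two curves realizing the same partition typically intersect. This is precisely the phenomenon the lemma has to contend with: it is why there are infinitely many type 2 hexagons through a fixed 3-path, and why Section \ref{sec-type2} has to work with the Farey graph $\calf(S_{0,4})$ at all. Consequently your case analysis only determines the topological type (the partition) that $\zeta$ must have; it neither produces a curve that is genuinely disjoint from $a_1$ and $d_1$, nor yields uniqueness, since fixing the indices $k,i$ still leaves infinitely many non-isotopic candidates of that type.

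The missing input is the rest of the hexagon. Your argument uses only $a$, $d$ and Lemma \ref{lem-a0d0}, but $a_1$ and $d_1$ intersect (as $a$ and $d$ are non-adjacent vertices of the hexagon), and whether a curve of the required type disjoint from both exists depends on how $a_1\cup d_1$ actually sits in $R$, in particular on $i(a_1,d_1)$ being as small as possible. The paper extracts this from $b,c,e,f$: after cutting along $A_1$ it uses disjointness from $B,C,E,F$ together with minimal-position and bigon arguments (Claims \ref{claim-ce} and \ref{claim-d1}) to show that $D_1$ meets the resulting four-holed sphere in mutually isotopic arcs; only then does a curve disjoint from $D_1\cap R$ exist, and it is unique because at most one curve in $T$ is disjoint from the two intersecting curves $a_1$ and $d_1$ --- an argument about curves, not partitions. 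Without a replacement for this step, both the existence and the uniqueness assertions in your proposal are unproven.
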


Let $\Pi$ be a hexagon in $\calt(S)$ of type 2, and let $(a, b, c, d, e, f)$ be a 6-tuple defining $\Pi$ with $a$ and $d$ BP-vertices.
We denote by $\zeta(\Pi)$ the curve $\zeta$ obtained by applying Lemma \ref{lem-zeta} to $\Pi$.
Let $d_1$ be the curve in $d$ distinct from $d_0$.
In the surface $S_{d_0}$, $a$ and $\{ a_0, \zeta \}$ are BPs, $b$ and $d_1$ are p-curves, and $c$ is an h-curve.
The 5-tuple $(a, b, c, d_1, \{ a_0, \zeta \})$ defines a pentagon in $\calt(S_{d_0})$.

\begin{proof}[Proof of Lemma \ref{lem-zeta}]
Choose representatives $A,\ldots, F$ of $a,\ldots, f$, respectively, such that any two of them intersect minimally.
Let $A_0$ and $A_1$ denote the two components of $A$ so that the isotopy class of $A_0$ is $a_0$.
Let $D_0$ and $D_1$ denote the two components of $D$ so that the isotopy class of $D_0$ is $d_0$.
We define $T$ as the surface obtained by cutting $S$ along $A_0\cup D_0$, which is homeomorphic to $S_{0, 5}$.
We label boundary components of $T$ as $\partial$, $\partial_a^1$, $\partial_a^2$, $\partial_d^1$ and $\partial_d^2$ so that $\partial$ corresponds to $\partial S$, $\partial_a^1$ and $\partial_a^2$ correspond to $A_0$, and $\partial_d^1$ and $\partial_d^2$ correspond to $D_0$.
Without loss of generality, we may assume that $A_1$ is a p-curve in $T$ cutting off $\partial$ and $\partial_a^2$.
Each of $B$ and $F$ is a p-curve in $T$ cutting off $\partial_d^1$ and $\partial_d^2$.
Similarly, each of $C$ and $E$ is a p-curve in $T$ cutting off $\partial_a^1$ and $\partial_a^2$.

Let $R$ be the component of $T_{A_1}$ homeomorphic to $S_{0, 4}$.
The surface $R$ contains $\partial_a^1$, $\partial_d^1$ and $\partial_d^2$.
For each essential simple arc $l$ in $R$ whose boundary lies in $A_1$ and for each $\partial_j^k\in \{ \partial_a^1, \partial_d^1, \partial_d^2\}$, we say that $l$ {\it cuts off} $\partial_j^k$ if $\partial_j^k$ lies in the annulus cut off by $l$ from $R$ (see Figure \ref{fig-arcs} (a)).
Since $B$ is a curve in $R$ and is disjoint from $C$, the intersection $C\cap R$ consists of mutually isotopic, essential simple arcs in $R$ cutting off $\partial_a^1$.
Similarly, since $F$ is a curve in $R$ and is disjoint from $E$, the intersection $E\cap R$ consists of mutually isotopic, essential simple arcs in $R$ cutting off $\partial_a^1$.
Pick a component $l_C$ of $C\cap R$ and a component $l_E$ of $E\cap R$.

\begin{claim}\label{claim-ce}
The two arcs $l_C$ and $l_E$ are non-isotopic, and cannot be isotoped so that they are disjoint. 
\end{claim}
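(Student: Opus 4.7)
My plan is to prove the stronger quantitative statement that the geometric intersection number $i(l_C, l_E)$ in $R$ is positive. This simultaneously implies both assertions of the claim: non-isotopy of $l_C$ and $l_E$ (since isotopic arcs admit disjoint parallel representatives, forcing $i = 0$) and the impossibility of isotoping them to be disjoint.

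First, the vertices $c$ and $e$ occupy positions $3$ and $5$ in the defining 6-tuple of the hexagon and hence are non-adjacent in $\calt(S)$, so $i(c, e) > 0$ and in particular $C$ and $E$ intersect essentially in $S$. Because $C$ and $E$ are disjoint from $a_0 \cup d_0$ (both $a_0$ and $d_0$ were chosen disjoint from every curve in the hexagon), they descend to curves in $T$, and the inclusion $T \hookrightarrow S$ yields $i(C, E)_T \geq i(C, E)_S > 0$.

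Next I would exploit the decomposition $T = R \cup_{A_1} P_T$, where $P_T$ denotes the other component of $T_{A_1}$, a pair of pants with boundaries $\partial$, $\partial_a^2$, and $A_1$. Since $C$ cuts off $\partial_a^1, \partial_a^2$ in $T$ and these lie on opposite sides of $A_1$, the arcs of $C \cap P_T$ are essential in $P_T$, have both endpoints on $A_1$, and each separates $\partial_a^2$ from $\partial$; the same conclusion holds for $E \cap P_T$. In a pair of pants, essential arcs with both endpoints on one boundary component and separating a prescribed second boundary component from the third form a single isotopy class, so the arcs of $C \cap P_T$ and of $E \cap P_T$ are all mutually isotopic in $P_T$ and admit pairwise disjoint representatives.

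Suppose, toward a contradiction, that $l_C$ and $l_E$ can be isotoped to be disjoint in $R$ (this includes the case $l_C$ isotopic to $l_E$, via parallel push-offs). Since all arcs of $C \cap R$ are mutually isotopic to $l_C$ and similarly for $E \cap R$ and $l_E$, I can choose disjoint representatives of all the arcs in $C \cap R$ and $E \cap R$. Combined with disjoint representatives in $P_T$ produced in the previous paragraph, and after sliding endpoints along $A_1$ to match the two sides, I would reassemble into closed curves $C'$ and $E'$ in $T$, isotopic to $C$ and $E$ respectively, with $C' \cap E' = \emptyset$. This yields $i(C, E)_T = 0$, contradicting the first step.

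The only genuinely geometric point is the endpoint-sliding and reassembly along $A_1$. Because the arcs on each side are mutually parallel, their cyclic order of endpoints on $A_1$ is flexible, and one can realize the required matching by sliding endpoints along $A_1$ without introducing any new interior intersection between $C'$ and $E'$. I expect this cyclic-assembly argument to be routine, but it is the only step requiring genuine geometric care.
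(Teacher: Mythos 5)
The gap is at the step you yourself flag as the only delicate one, the reassembly along $A_1$, and it is not merely delicate: it is false. The isotopy classes of $C\cap R$ in $R$ and of $C\cap P_T$ in $P_T$ do not determine the isotopy class of $C$ in $T$; there is an integer's worth of ambiguity coming from twisting about $A_1$. Consequently, disjoint realizability of the pieces on each side of $A_1$ does not imply disjoint realizability of the closed curves, because matching the endpoints on $A_1$ in the prescribed pairing can force the $C$-strands and $E$-strands to cross inside a collar of $A_1$. Concretely, if $C$ is any curve in $T$ meeting $A_1$ minimally in two points and $E=t_{A_1}^{2}(C)$, then $E\cap R$ is isotopic in $R$ to $C\cap R$ and $E\cap P_T$ is isotopic in $P_T$ to $C\cap P_T$ (the twist is supported in an annulus about $A_1$), yet $i(C,E)=2\,i(A_1,C)^{2}=8>0$. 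So from the hypothesis that $l_C$ and $l_E$ are disjointly realizable in $R$ --- in particular from $l_C$ isotopic to $l_E$ --- one cannot conclude $i(C,E)_T=0$, and the contradiction you aim for does not materialize. A tell-tale symptom is that your argument never uses $b\neq f$, whereas the non-isotopy of $l_C$ and $l_E$ genuinely depends on it.

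For comparison, the paper's proof is purely two-dimensional and avoids reassembly entirely. Since $C\cap R$ (resp.\ $E\cap R$) consists of parallel essential arcs cutting off $\partial_a^1$, the curve $B$ (resp.\ $F$) is recovered, up to isotopy, as the unique essential curve in $R$ disjoint from those arcs; hence $l_C\simeq l_E$ would force $B\simeq F$, contradicting $b\neq f$. The second assertion then follows from the elementary fact that two non-isotopic essential arcs in $S_{0,4}$ with both endpoints on the same boundary component, each cutting off an annulus containing the same boundary component $\partial_a^1$, cannot be made disjoint: a disjoint pair of such arcs would be nested and therefore isotopic. Your first two steps (that $i(c,e)\neq 0$ and that this persists in $T$) are fine but end up playing no role in a correct proof.
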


\begin{proof}
The former assertion holds because otherwise $B$ and $F$ would be isotopic.
The latter assertion holds because $l_C$ and $l_E$ are non-isotopic and because both $l_C$ and $l_E$ cut off $\partial_a^1$.
\end{proof}

\begin{claim}\label{claim-d1}
The intersection $D_1\cap R$ consists of mutually isotopic, essential simple arcs in $R$.
\end{claim}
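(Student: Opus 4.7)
The plan is to realize $D_1$ as a simple closed curve in $T$ that crosses $A_1$ minimally, and then exploit the p-curve $B$ sitting inside $R$ to force all arcs of $D_1\cap R$ into a single isotopy class. First I would verify that $D_1$ is disjoint from both $A_0$ and $D_0$: disjointness from $A_0$ comes from $i(a_0,d)=0$ (Lemma \ref{lem-a0d0}) together with the minimality of $|A\cap D|$, and disjointness from $D_0$ is automatic since $D_0$ and $D_1$ are the two components of the BP representative $D$. Thus $D_1$ sits inside $T$. Because $a$ and $d$ are not adjacent in the hexagon, $i(a,d)>0$, and the same minimality computation (using $i(a_0,d)=i(a,d_0)=0$) forces $|A_1\cap D_1|=i(a,d)>0$, with this intersection also minimal in $T$. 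Hence $D_1\cap R$ is a nonempty disjoint union of properly embedded simple arcs with both endpoints on $\partial_{A_1}^R$, the boundary component of $R$ coming from $A_1$.

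Essentiality of each such arc would then follow from the bigon criterion: an inessential arc in $R$ cobounds a disk with a subarc of $\partial_{A_1}^R$, which lifts to a bigon between $A_1$ and $D_1$ in $T$ and contradicts their minimal intersection.

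The main step, and where I expect the only real obstacle, is mutual isotopy: in $R\cong S_{0,4}$ there are three isotopy classes of essential arcs with both endpoints on $\partial_{A_1}^R$, and disjointness from the arcs of $C\cap R$ alone leaves more than one class available. My approach is to use the p-curve $B$, which is disjoint from $D$ in $S$ and, as already observed, is a p-curve in $R$ cutting off $\partial_d^1$ and $\partial_d^2$. Cutting $R$ along $B$ produces a pair of pants $P_0$ with boundary components $\partial_{A_1}^R$, $\partial_a^1$ and $B$, and the arcs of $D_1\cap R$, being disjoint from $B$ and having endpoints on $\partial_{A_1}^R\subset P_0$, must all lie in $P_0$. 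Essentiality in $R$ forces essentiality in $P_0$, and inside a pair of pants there is a unique isotopy class of essential simple arcs with both endpoints on any prescribed boundary component. All arcs of $D_1\cap R$ are therefore mutually isotopic in $P_0$, hence in $R$.
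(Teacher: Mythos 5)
Your preliminary steps (that $D_1\subset T$, that $|A_1\cap D_1|=i(a,d)>0$ with the intersection still minimal in $T$, and essentiality of the arcs of $D_1\cap R$ via the bigon criterion) are fine, but the main step rests on a false premise: $B$ is \emph{not} disjoint from $D$. In the hexagon $(a,b,c,d,e,f)$ the vertices $b=v_2$ and $d=v_4$ are non-adjacent by the definition of a hexagon, so $i(b,d)\neq 0$; and since $d_0$ is disjoint from $b$ by Lemma \ref{lem-a0d0}, minimal position forces $B\cap D_1\neq\emptyset$. Consequently the arcs of $D_1\cap R$ are not disjoint from $B$, they need not lie in the pair of pants $P_0$ obtained by cutting $R$ along $B$, and the uniqueness of essential arcs in a pair of pants cannot be invoked. (A smaller imprecision: in $R\cong S_{0,4}$ there are infinitely many isotopy classes of essential arcs based at the $A_1$-boundary, falling into three types according to which of $\partial_a^1$, $\partial_d^1$, $\partial_d^2$ they cut off; but this is secondary.)

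The gap is not cosmetic: the only curves in the configuration that are genuinely disjoint from $D_1$ are $C$ and $E$ (besides $A_0$ and $D_0$), and their traces on $R$ are arcs cutting off $\partial_a^1$, which by themselves do not confine $D_1\cap R$ to a single isotopy class. This is exactly why the paper's argument is a case analysis rather than a one-line cutting argument: assuming two non-isotopic components $l_D^1$, $l_D^2$ of $D_1\cap R$, it first rules out the type cutting off $\partial_a^1$ using Claim \ref{claim-ce}, then uses that $D_1$ is a p-curve in $T$ cutting off $\partial$ and one of $\partial_d^1$, $\partial_d^2$, caps off the other $d$-boundary to make $C$ and $E$ isotopic in the capped surface, and applies the FLP minimal-position results to produce a closed curve made of subarcs of $C$ and $E$ isotopic to that boundary, contradicting the existence of a path from $\partial_d^k$ to $l_D^k$ missing $C\cup E$. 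To repair your proof you would need an argument of this kind; replacing $B$ by some closed curve disjoint from $D_1$ is not available in this configuration.
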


\begin{proof}
Assuming the contrary, we deduce a contradiction.
Any family of essential simple arcs in $R$ which are mutually disjoint and non-isotopic and whose boundaries lie in $A_1$ has at most three elements.
If $D_1\cap R$ had three components which are mutually non-isotopic, then $l_C$ and $l_E$ would be isotopic because $l_C$ and $l_E$ are disjoint from $D_1\cap R$.
This contradicts Claim \ref{claim-ce}.
We thus assume that $D_1\cap R$ contains exactly two essential simple arcs in $R$ up to isotopy.
Let $l_D^1$ and $l_D^2$ be components of $D_1\cap R$ which are non-isotopic.

If either $l_D^1$ or $l_D^2$ cut off $\partial_a^1$, then $l_C$ and $l_E$ would be isotopic to that arc.
This also contradicts Claim \ref{claim-ce}.
It follows that one of $l_D^1$ and $l_D^2$ cuts off $\partial_d^1$ and another cuts off $\partial_d^2$.
Without loss of generality, we may assume that $l_D^1$ cuts off $\partial_d^1$, and $l_D^2$ cuts off $\partial_d^2$.
%====================================
\begin{figure}
\begin{center}
\includegraphics[width=10cm]{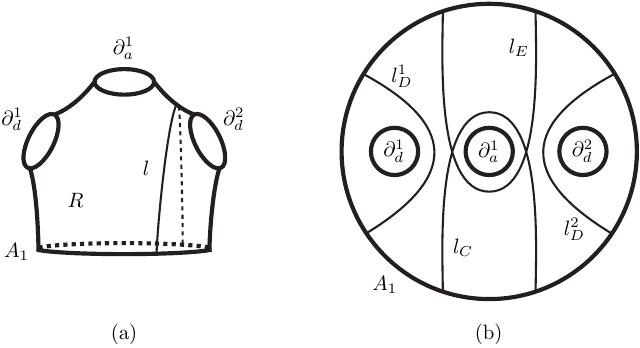}
\caption{The arc $l$ in (a) cuts off $\partial_d^2$.}\label{fig-arcs}
\end{center}
\end{figure}
%====================================
Claim \ref{claim-ce} implies that $l_C$ and $l_E$ are drawn as in Figure \ref{fig-arcs} (b).
For each $k=1, 2$, there exists a path in $R$ connecting a point of $\partial_d^k$ with a point of $l_D^k$ without touching neither $C\cap R$ nor $E\cap R$.

The curve $D_1$ is a p-curve in $T$ cutting off $\partial$ and one of $\partial_d^1$ and $\partial_d^2$.
Suppose that $D_1$ cuts off $\partial$ and $\partial_d^2$.
We define $U$ as the surface obtained from $T$ by attaching a disk to $\partial_d^1$.
The two curves $C$ and $D_1$ are isotopic in $U$ because $C$ and $D_1$ are disjoint and the pair of pants cut off from $T$ by each of them does not contain $\partial_d^1$.
Similarly, $D_1$ and $E$ are also isotopic in $U$.
It turns out that $C$ and $E$ are isotopic in $U$.
On the other hand, $C$ and $E$ intersect minimally as curves in $T$, and $C\cap E$ is non-empty.
By \cite[Expos\'e 3, Proposition 10]{flp}, there exist a subarc in $C$ and a subarc in $E$ whose union is a simple closed curve in $T$ isotopic to $\partial_d^1$.
The curve $D_1$ is disjoint from $C$ and $E$.
Any path in $T$ connecting a point of $\partial_d^1$ with a point of $D_1$ therefore intersects either $C$ or $E$.
This contradicts the property obtained in the end of the last paragraph.
Exchanging $\partial_d^1$ and $\partial_d^2$, we can deduce a contradiction if we suppose that $D_1$ cuts off $\partial$ and $\partial_d^1$.
\end{proof}

By Claim \ref{claim-d1}, there exists an essential simple closed curve in $R$ disjoint from $D_1\cap R$, which is unique up to isotopy.
Let $\zeta$ denote the isotopy class of that curve.
This is a desired one.
In fact, condition (a) holds by definition.
Claim \ref{claim-ce} implies that any component of $D_1\cap R$ cuts off either $\partial_d^1$ or $\partial_d^2$.
The curve $\zeta$ is therefore a p-curve in $R$ cutting off $\partial_a^1$ and one of $\partial_d^1$ and $\partial_d^2$.
Conditions (b) and (c) follow.
The uniqueness of $\zeta$ holds because there exists at most one curve in $T$ disjoint from the two curves $a_1$ and $d_1$ that intersect.
\end{proof}

In the proof of the subsequent two theorems, we use the following:

\medskip

\noindent {\bf Graph $\calf$.} Let $R$ be a surface homeomorphic to $S_{0, 4}$.
We define a simplicial graph $\calf =\calf(R)$ so that the set of vertices of $\calf$ is $V(R)$, and two vertices $\alpha$, $\beta$ of $\calf$ are connected by an edge of $\calf$ if and only if $i(\alpha, \beta)=2$.

\medskip

It is well known that this graph is isomorphic to the Farey graph realized as an ideal triangulation of the Poincar\'e disk (see \cite[Section 3.2]{luo} or Figure \ref{fig-farey} (a)).
We mean by a {\it triangle} in $\calf$ a subgraph of $\calf$ consisting of exactly three vertices and exactly three edges.
Note that for any two ordered triples of vertices in $\calf$ defining triangles in $\calf$, there exists a unique simplicial automorphism of $\calf$ sending the first triple to the second one. 

The following theorem characterizes hexagons in $\calt(S)$ of type 2.

\begin{thm}\label{thm-hex2-m}
Let $\Pi$ be a hexagon in $\calt(S)$ of type 2, and let $(a, b, c, d, e, f)$ be a 6-tuple defining $\Pi$ with $a$ and $d$ BP-vertices.
Put $\zeta =\zeta(\Pi)$.
Then there exists a unique non-zero integer $m$ with $f=t_{\zeta}^m(b)$ and $e=t_{\zeta}^m(c)$.
\end{thm}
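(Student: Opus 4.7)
The plan is to work inside $S_{d_0}\cong S_{1,3}$, where we find two pentagons naturally attached to the hexagon $\Pi$, and then use Lemma \ref{lem-hex2-pen}(ii) to produce and analyze a mapping class of $S_{d_0}$ sending one pentagon to the other. Write $z=\{a_0,\zeta\}$ and let $d_1$ denote the curve in $d$ other than $d_0$. By the observation after Lemma \ref{lem-zeta}, the 5-tuple $(a,b,c,d_1,z)$ defines a pentagon $P_1$ in $\calt(S_{d_0})$ with BP-vertices $a$ and $z$. Applying the same construction to the reversed 6-tuple $(a,f,e,d,c,b)$---which defines the same hexagon $\Pi$ and, via the uniqueness clauses in Lemmas \ref{lem-a0d0} and \ref{lem-zeta}, recovers the same $a_0,d_0,\zeta$---yields a second pentagon $P_2=(a,f,e,d_1,z)$ in $\calt(S_{d_0})$ sharing the three vertices $a$, $d_1$, $z$ with $P_1$.

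By Lemma \ref{lem-hex2-pen}(ii) there exists $g\in \mod(S_{d_0})$ sending $P_1$ to $P_2$. Following that lemma's proof, which passes through $(S_{d_0})_{a_0}\cong S_{0,5}$ and Lemma \ref{lem-05}, one sees that $g$ can be arranged to fix each of $a_0,a_1,d_1,\zeta$ as isotopy classes and to satisfy $g(b)=f$ and $g(c)=e$. The curves $\{a_0,d_1,\zeta\}$ form a pants decomposition of $S_{d_0}\cong S_{1,3}$, so the stabilizer of this decomposition in $\mod(S_{d_0})$ has the form $\langle t_{a_0},t_{d_1},t_\zeta\rangle\cdot H$, with $H$ a finite subgroup whose elements fix each of the pants curves. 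Writing $g=t_{a_0}^{p}t_{d_1}^{r}t_\zeta^{m}h$ and using the additional curve $a_1$, which is disjoint from $a_0$ and $\zeta$ but satisfies $i(a_1,d_1)>0$ (this inequality forced by $i(a,d)\neq 0$ together with $a_0$ and $d_0$ being disjoint from all of $a,\dots,f$ by Lemma \ref{lem-a0d0}), the identity $g(a_1)=a_1$ eliminates the $t_{d_1}$-factor. A further verification that $h$ acts trivially on $b$ and $c$ then gives
\[
f=g(b)=t_\zeta^{m}(b),\qquad e=g(c)=t_\zeta^{m}(c),
\]
since $a_0$ is disjoint from both $b$ and $c$.

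Finally, $m$ is nonzero since $b\neq f$ as vertices of $\Pi$, and $m$ is unique because $i(b,\zeta)>0$: the vertices $v_2=b$ and $v_5=z$ of $P_1$ are at combinatorial distance two, hence non-adjacent in $\calt(S_{d_0})$, which together with $a_0\in z$ being disjoint from $b$ gives $i(b,\zeta)=i(b,z)>0$. The main obstacle will be the analysis in the second step: extracting from Lemma \ref{lem-hex2-pen}(ii) the precise vertexwise and curvewise fixing described above, and then ruling out both the $t_{d_1}$-component of $g$ and any residual contribution of $h\in H$ to the action on $b$ and $c$, which requires a careful understanding of the pants-decomposition stabilizer of $\{a_0,d_1,\zeta\}$ in $\mod(S_{1,3})$ and of how $b,c$ sit relative to it.
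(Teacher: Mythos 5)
Your setup (the two pentagons $(a,b,c,d_1,\{a_0,\zeta\})$ and $(a,f,e,d_1,\{a_0,\zeta\})$ in $\calt(S_{d_0})$) matches the paper's, and your reduction of uniqueness of $m$ to $i(b,\zeta)>0$ is fine. But the core of your argument --- transporting $P_1$ to $P_2$ by a mapping class $g$ of $S_{d_0}$ and then locating $g$ inside the stabilizer of the pants decomposition $\{a_0,d_1,\zeta\}$ --- has a genuine gap exactly where the content of the theorem lies. First, Lemma \ref{lem-hex2-pen}(ii) and its proof give a $g$ with $g(c)=c$ used to see that $g$ preserves the two $a_0$-boundary components of $(S_{d_0})_{a_0}$; in your application $g(c)=e\neq c$, so the descent of $g$ from $S_{0,5}$ back to $S_{d_0}$ fixing $\zeta$, $a_1$, $d_1$ needs its own argument, which you only assert. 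Second, and more seriously, after writing $g=t_{a_0}^{p}t_{d_1}^{r}t_{\zeta}^{m}h$ you must show that the finite part $h$ acts trivially on $b$ and $c$ (and on $a_1$, which you use to kill $r$). An element of the pants-decomposition stabilizer that fixes each pants curve can still act nontrivially on $b$ --- e.g.\ by an involution swapping the two sides of a curve --- and if $h(b)\neq b$ your conclusion $f=t_{\zeta}^{m}(b)$ fails. You flag this yourself as ``the main obstacle,'' but it is not a technicality to be deferred: it is the statement being proved.

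For comparison, the paper sidesteps the global mapping class entirely. It cuts $S_{d_0}$ along the BP $a$ to get a four-holed sphere $R$ containing $b$, $f$ and $\zeta$, notes via Lemma \ref{lem-hex2-pen}(ii) that $i(\zeta,b)=i(\zeta,f)=2$, and then reads off the unique $m$ with $t_{\zeta}^{m}(b)=f$ from the action of $t_{\zeta}$ on the Farey graph $\calf(R)$: that action is free on the link of $\zeta$ and transitive on the link-vertices corresponding to curves cutting off $\partial_d^1$ and $\partial_d^2$. The companion identity $e=t_{\zeta}^{m}(c)$ then comes for free, since $(a,b,c,d,t_{\zeta}^m(c),t_{\zeta}^m(b))$ is already known to be a hexagon and at most one h-curve is disjoint from both $d$ and $f$. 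If you want to salvage your route, you would need to carry out the pants-stabilizer analysis explicitly in $S_{1,3}$; the Farey-graph computation in $R\cong S_{0,4}$ is the cleaner substitute for it.
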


\begin{proof}
Let $a_0$ and $d_0$ be the curves in the BPs $a$ and $d$, respectively, obtained in Lemma \ref{lem-a0d0}.
The surface $S_{d_0}$ is homeomorphic to $S_{1, 3}$.
In $S_{d_0}$, the curve in $d$ distinct from $d_0$, denoted by $d_1$, is a p-curve, $b$ is a p-curve, $c$ is an h-curve, and $a$ and $\{ a_0, \zeta \}$ are BPs.
The 5-tuple $(a, b, c, d_1, \{ a_0, \zeta \})$ defines a pentagon in $\calt(S_{d_0})$ (see Figure \ref{fig-pen-pf}).
%====================================
\begin{figure}
\begin{center}
\includegraphics[width=5cm]{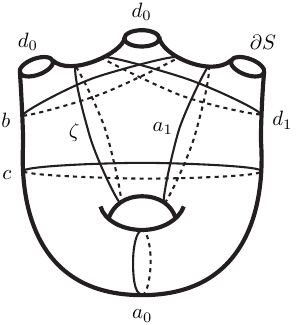}
\caption{}\label{fig-pen-pf}
\end{center}
\end{figure}
%====================================
Similarly, the 5-tuple $(a, f, e, d_1, \{ a_0, \zeta \})$ also defines a pentagon in $\calt(S_{d_0})$ such that in $S_{d_0}$, $e$ is an h-curve and $f$ is a p-curve. 
Cut $S_{d_0}$ along $a$.
The obtained surface consists of a pair of pants and a surface homeomorphic to $S_{0, 4}$.
Let $R$ denote the latter component.

Let $\partial_d^1$ and $\partial_d^2$ denote the two boundary components of $R$ corresponding to $d_0$.
The curves $b$ and $f$ lie in $R$ and cut off $\partial_d^1$ and $\partial_d^2$.
The curve $\zeta$ cuts off a pair of pants from $R$ containing exactly one of $\partial_d^1$ and $\partial_d^2$.
By Lemma \ref{lem-hex2-pen} (ii), we have $i(\zeta, b)=i(\zeta, f)=2$.
Looking at the action of the Dehn twist $t_{\zeta}$ on the graph $\calf(R)$, we see that $t_{\zeta}$ acts on the link of $\zeta$ in $\calf(R)$ freely.
Moreover, $t_{\zeta}$ transitively acts on the set of all vertices in the link of $\zeta$ that correspond to curves in $R$ cutting off $\partial_d^1$ and $\partial_d^2$.
It follows that there exists a unique integer $m$ with $t_{\zeta}^m(b)=f$.
Since $b$ and $f$ are distinct, the integer $m$ is non-zero.

The 6-tuple $(a, b, c, d, t_{\zeta}^m(c), t_{\zeta}^m(b))$ defines a hexagon in $\calt(S)$, as shown in the beginning of this section.
There exists at most one h-curve in $S$ disjoint from the BP $d$ and the h-curve $t_{\zeta}^m(b)=f$.
We therefore have $t_{\zeta}^m(c)=e$.
\end{proof}

\begin{thm}\label{thm-hex2-3path}
Let $\Pi$ be a hexagon in $\calt(S)$ of type 2, and let $(a, b, c, d, e, f)$ be a 6-tuple defining $\Pi$ with $a$ and $d$ BP-vertices.
Put $\zeta =\zeta(\Pi)$.
Then the following assertions hold:
\begin{enumerate}
\item If neither $f=t_{\zeta}(b)$ nor $f=t_{\zeta}^{-1}(b)$, then $\Pi$ is the only hexagon in $\calt(S)$ of type 2 containing $f$, $a$, $b$ and $c$.
\item If either $f=t_{\zeta}(b)$ or $f=t_{\zeta}^{-1}(b)$, then there exists exactly one hexagon in $\calt(S)$ of type 2 that is distinct from $\Pi$ and contains $f$, $a$, $b$ and $c$.
\end{enumerate}
\end{thm}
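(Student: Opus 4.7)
Let $\Pi'$ be any hexagon of type 2 containing $f, a, b, c$ as vertices; the hexagon adjacency and non-adjacency conditions between these four vertices force $\Pi'$ to be defined by a 6-tuple of the shape $(a, b, c, d', e', f)$, with $a$ and $d'$ its two BPs. Apply Lemmas \ref{lem-a0d0} and \ref{lem-zeta} and Theorem \ref{thm-hex2-m} to $\Pi'$, obtaining curves $a_0' \in a$, $d_0' \in d'$, $\zeta' := \zeta(\Pi')$, and a unique non-zero integer $m'$ with $f = t_{\zeta'}^{m'}(b)$ and $e' = t_{\zeta'}^{m'}(c)$. Since $a$ and $c$ are at cyclic distance two in the hexagon, $i(a, c) \neq 0$, so exactly one curve in the BP $a$ is disjoint from $c$; both $a_0$ and $a_0'$ equal that curve by Lemma \ref{lem-a0d0}, whence $a_0' = a_0$.

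Working inside the subsurface $R \cong S_{0, 4}$ from the proof of Theorem \ref{thm-hex2-m} (the component of $(S_{d_0})_a$ homeomorphic to $S_{0, 4}$), the curves $b$ and $f$ lie in $\lk(\zeta)$ with $f = t_\zeta^m(b)$. Matching $a_0' = a_0$ and the boundary-cutoff data of Lemma \ref{lem-zeta}, any valid $\zeta'$ must likewise be a vertex of the Farey graph $\calf(R)$ that is a common neighbor of $b$ and $f$, non-separating in $S$, and cutting off $\partial_a^1$ together with one of the two copies of $d_0$ in $\partial R$. A direct computation in $\calf(R)$ (using $i(p/q, r/s) = 2|ps - qr|$ in the Farey parameterization) shows that the common neighbors of $b$ and $f$ in $\calf(R)$ are $\{\zeta, \zeta''\}$ when $|m| = 1$ (with $\zeta''$ the apex of the other Farey triangle sharing the edge $\{b, f\}$), $\{\zeta, t_\zeta(b)\}$ when $|m| = 2$, and $\{\zeta\}$ when $|m| \geq 3$. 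The extra candidate $t_\zeta(b)$ in the $|m| = 2$ case is a Dehn-twist image of the h-curve $b$, hence itself an h-curve and therefore separating in $S$, violating condition (a) of Lemma \ref{lem-zeta}. Thus when $|m| \neq 1$ only $\zeta$ is valid, forcing $\Pi' = \Pi$ and proving assertion (i).

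For assertion (ii), suppose $|m| = 1$. One constructs the other hexagon $\Pi''$ by taking $\zeta''$ as the other Farey-triangle apex, setting $e'' := t_{\zeta''}^{-m}(c)$, and producing a BP $d''$ via the construction preceding Lemma \ref{lem-hex2-pen} with $\zeta''$ in place of $\zeta$; uniqueness of $\Pi''$ is then immediate from the counting above. The main obstacle is this existence step: one must verify that $\zeta''$ is non-separating in $S$ (so that Lemma \ref{lem-zeta}(a) and the construction apply) and that the resulting $d''$ and $e''$ together with $a, b, c, f$ satisfy every hexagon adjacency and non-adjacency condition. A natural route is to exploit the symmetry of the two Farey triangles on the edge $\{b, f\}$ in $\calf(R)$: produce a self-homeomorphism of $S$ fixing each of $a, b, c, f$ up to isotopy and exchanging $\zeta$ with $\zeta''$, thereby transporting $\Pi$ to a bona fide hexagon $\Pi''$ of type 2 with the desired data.
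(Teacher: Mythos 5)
Your reduction of part (i) to a count of common neighbours of $b$ and $f$ in the Farey graph $\calf(R)$ is a legitimate alternative to the paper's route (the paper instead proves Lemma \ref{lem-04}, classifying when $t_\zeta^m(b)=t_\eta^n(b)$ for two distinct twisting curves), and your identification $a_0'=a_0$ is fine. However, your explicit Farey computation uses the wrong action: the Dehn twist about the slope-$\infty$ curve of $S_{0,4}$ acts on slopes by $x\mapsto x\pm 2$, not $x\mapsto x\pm 1$; equivalently $i(b,t_\zeta(b))=2\lvert 2\rvert=4$, so $b$ and $f=t_\zeta^{\pm1}(b)$ are \emph{not} Farey-adjacent and there is no ``edge $\{b,f\}$'' whose opposite triangle apex could serve as $\zeta''$. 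Redoing the count correctly (neighbours of $0$ and of $2m$ with $|p|=1$, $|p-2mq|=1$): for $|m|\geq 2$ the \emph{only} common neighbour is $\zeta$ (so your separating-curve argument in the $|m|=2$ case is vacuous — its premise that $t_\zeta(b)$ is a common neighbour is false), and for $|m|=1$ the second candidate is the vertex forming a triangle with each of $\{b,\zeta\}$ and $\{f,\zeta\}$, which is exactly the paper's $\eta_\pm$. The conclusion of (i) survives this correction (cite Lemma \ref{lem-zeta-eq} to pass from $\zeta'=\zeta$ to $\Pi'=\Pi$), but as written the computation is wrong.

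The substantive gap is the existence half of (ii). Your counting gives at most one hexagon besides $\Pi$, but you must still produce it, and you only sketch a strategy: find a self-homeomorphism of $S$ fixing $a$, $b$, $c$, $f$ up to isotopy and swapping $\zeta$ with $\zeta''$. You neither construct such a homeomorphism nor verify it exists; in particular it is not clear that the Farey reflection of $R$ exchanging $\zeta$ and $\zeta''$ extends over $S$ compatibly with the boundary identifications of $R$ \emph{and} fixes $c$ (which lives outside $R$ and is tied to $d$, which the map must move). The paper avoids this by an explicit construction: it exhibits $\eta_\pm$, a curve $x_1$ (resp.\ $y_1$) in $S_{d_0}$, checks that $(a,b,c,x_1,\{a_0,\eta_+\})$ is a pentagon in $\calt(S_{d_0})$, and invokes the pentagon-gluing construction from the start of Section \ref{sec-type2} to conclude that $(a,b,c,x,t_{\eta_+}^{-1}(c),f)$ is a genuine type 2 hexagon. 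Without some such verification your proof of (ii) is incomplete.
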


Before proving Theorem \ref{thm-hex2-3path}, we prepare two lemmas.

\begin{lem}\label{lem-zeta-eq}
Let $\Pi$ be a hexagon in $\calt(S)$ of type 2, and let $(a, b, c, d, e, f)$ be a 6-tuple defining $\Pi$ with $a$ and $d$ BP-vertices.
Let $\Omega$ be a hexagon in $\calt(S)$ of type 2 containing $f$, $a$, $b$ and $c$.
If $\zeta(\Pi)=\zeta(\Omega)$, then $\Pi =\Omega$.
\end{lem}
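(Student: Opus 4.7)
The plan is to show that $\Pi$ and $\Omega$ have identical defining 6-tuples. Because $\Omega$ is of type 2 and contains the 3-path $f - a - b - c$ of $\Pi$, and because in a type 2 hexagon the two BP-vertices occupy antipodal positions, the defining 6-tuple of $\Omega$ can be written (after cyclic re-labeling) as $(a, b, c, d', e', f)$ with $a$ and $d'$ the BP-vertices. Set $\zeta := \zeta(\Pi) = \zeta(\Omega)$; the task is then to prove $e = e'$ and $d = d'$.

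The first step is to apply Theorem \ref{thm-hex2-m} separately to $\Pi$ and to $\Omega$. This yields unique non-zero integers $n_\Pi$ and $n_\Omega$ satisfying
\[
  f = t_\zeta^{n_\Pi}(b) = t_\zeta^{n_\Omega}(b), \qquad e = t_\zeta^{n_\Pi}(c), \qquad e' = t_\zeta^{n_\Omega}(c).
\]
The proof of Theorem \ref{thm-hex2-m} also records $i(\zeta, b) = 2 > 0$, which implies that no non-trivial power of $t_\zeta$ fixes $b$; therefore $n_\Pi = n_\Omega$, and consequently $e = e'$.

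At this stage the two hexagons agree at five vertices and may differ only at the BP-vertex between $c$ and $e$. I would then invoke Lemmas \ref{lem-a0d0} and \ref{lem-zeta} for both hexagons, producing curves $a_0 \in a$, $d_0 \in d$ for $\Pi$ and $a_0' \in a$, $d_0' \in d'$ for $\Omega$, together with the pair-of-pants condition that $\{a_0, \zeta\}$ is a BP in $S_{d_0}$ cutting off one $d_0$-boundary (and symmetrically with primes). Re-gluing $d_0$ rephrases this as saying that the three disjoint curves $a_0, \zeta, d_0$ cobound a pair of pants in $S$, and likewise for $a_0', \zeta, d_0'$. The BP $a$ cuts $S$ into a pair of pants containing $\partial S$ and a subsurface $R \cong S_{1,2}$, inside which the non-separating curve $\zeta$ must lie. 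Combining Lemma \ref{lem-zeta}(b) with its symmetric counterpart (c), the pair $(a_0, d_0)$ is pinned down uniquely by the data $(a, \zeta)$, so $a_0 = a_0'$ and $d_0 = d_0'$. The full BP $d$ is then reconstructed as the unique BP in $\calt(S)$ containing $d_0$ and adjacent to both h-vertices $c$ and $e$, giving $d = d'$ and hence $\Pi = \Omega$.

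The main obstacle is the uniqueness claim for $(a_0, d_0)$ recovered from $(a, \zeta)$. Condition (b) of Lemma \ref{lem-zeta} alone carries a two-fold ambiguity corresponding to the choice of which $a$-boundary of $R$ lies with $\zeta$ in the pair of pants; eliminating this ambiguity requires the joint use of the symmetric condition (c) together with an analysis of the three essential pants-decomposing curves in $R \setminus \zeta \cong S_{0,4}$ and how they interact with the labels $a_0$, $a_1$ from the BP $a$.
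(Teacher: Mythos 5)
Your first step coincides exactly with the paper's proof: apply Theorem \ref{thm-hex2-m} to $\Pi$ and to $\Omega$ with the common curve $\zeta$, compare the two powers of $t_{\zeta}$ via $t_{\zeta}^{m}(b)=f=t_{\zeta}^{n}(b)$, and conclude $m=n$ and hence $e=e'$. Your justification that $m=n$ (no nontrivial power of $t_{\zeta}$ fixes $b$ since $i(\zeta,b)=2$) is fine and is exactly what the uniqueness clause of Theorem \ref{thm-hex2-m} encodes.

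Where you go astray is the last step. Once $e=e'$ is known, $d=d'$ is immediate: $d$ and $d'$ are both BPs disjoint from the two distinct h-curves $c$ and $e$, and in $S_{2,1}$ at most one BP is disjoint from two given distinct h-curves (this is the general fact the paper states and uses in the proof of Lemma \ref{lem-hex3}, and it is also implicitly the uniqueness used at the end of the proof of Theorem \ref{thm-hex2-m}). Your detour through Lemmas \ref{lem-a0d0} and \ref{lem-zeta} to pin down $a_0=a_0'$ and $d_0=d_0'$ is therefore unnecessary, and moreover you yourself flag that its key uniqueness claim --- recovering $(a_0,d_0)$ from $(a,\zeta)$ --- is an unresolved ``obstacle.'' As written, your argument is incomplete precisely at that point; note also that your own closing sentence already appeals to ``the unique BP adjacent to both h-vertices $c$ and $e$,'' which, if granted, renders the entire $a_0,d_0$ analysis redundant. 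Replace the third paragraph with the one-line uniqueness argument and the proof is complete and matches the paper's.
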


\begin{proof}
Put $\zeta =\zeta(\Pi)=\zeta(\Omega)$.
By Theorem \ref{thm-hex2-m}, there exists a unique integer $m$ with $t_{\zeta}^m(b)=f$ and $t_{\zeta}^m(c)=e$.
Let $(a, b, c, d', e', f)$ be the 6-tuple defining $\Omega$.
Applying the same theorem to $\Omega$, we obtain a unique integer $n$ with $t_{\zeta}^n(b)=f$ and $t_{\zeta}^n(c)=e'$.
The equality $t_{\zeta}^m(b)=t_{\zeta}^n(b)$ then holds.
We thus have $m=n$ and $e=e'$.
Since at most one BP in $S$ disjoint from $c$ and $e$ exists, we have $d=d'$.
\end{proof}

We set $R=S_{0, 4}$.
For any edge $\tau$ of the graph $\calf=\calf(R)$, the complement of $\tau \cup \partial \tau$ in the geometric realization of $\calf$ has exactly two connected components.
We call those components {\it sides} of $\tau$.

\begin{lem}\label{lem-04}
We set $R=S_{0, 4}$ and $\calf=\calf(R)$.
Let $\alpha$ and $\beta$ be curves in $R$ with $i(\alpha, \beta)=2$.
We denote by $\gamma$ the only curve in $R$ such that each of $\{ \alpha, \beta, \gamma \}$ and $\{ \alpha, \beta, t_{\alpha}(\gamma)\}$ defines a triangle in $\calf$.
Let $\delta$ be a curve in $R$ with $\delta \neq \alpha$ and $i(\beta, \delta)=2$.
Let $m$ and $n$ be non-zero integers.
If the equality $t_{\alpha}^m(\beta)=t_{\delta}^n(\beta)$ holds, then either $\delta =\gamma$ and $(m, n)=(-1, 1)$ or $\delta =t_{\alpha}(\gamma)$ and $(m, n)=(1, -1)$.
\end{lem}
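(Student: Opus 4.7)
The plan is to combine two structural facts about the Farey graph $\calf = \calf(R)$ with a short case analysis. First, for any $\eta \in V(R)$, the Dehn twist $t_\eta$ acts on the bi-infinite link-path of $\eta$ in $\calf$ as a translation of length two; in particular, for any $\mu$ in the link of $\eta$ and any $k \neq 0$, the vertex $t_\eta^k(\mu)$ is not adjacent to $\mu$ in $\calf$ (indeed $i(t_\eta^k(\mu), \mu) = 4|k|$). Second, the Farey graph contains no chordless $4$-cycle, so any two non-adjacent vertices of $\calf$ have at most two common neighbors, and when they have two, those are themselves adjacent in $\calf$. Put $v := t_\alpha^m(\beta) = t_\delta^n(\beta)$; since $m, n \neq 0$ we have $v \neq \beta$, and $v$ is adjacent in $\calf$ to both $\alpha$ and $\delta$.

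I would first show $\alpha$ and $\delta$ are adjacent in $\calf$, i.e.\ $i(\alpha, \delta) = 2$. Otherwise $\beta$ and $v$ would be two distinct common neighbors of the non-adjacent pair $\{\alpha, \delta\}$, so by the second fact $\beta$ and $v$ would be adjacent in $\calf$, contradicting the first. Granted this, $\delta$ is adjacent to both $\alpha$ and $\beta$, so $\delta$ is a common neighbor of the pair $\{\alpha, \beta\}$. There are exactly two such common neighbors, namely the third-vertices of the two Farey triangles sharing the edge $\{\alpha, \beta\}$, and by the defining property of $\gamma$ these are precisely $\gamma$ and $t_\alpha(\gamma)$. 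Hence $\delta \in \{\gamma, t_\alpha(\gamma)\}$.

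Consider the subcase $\delta = \gamma$. Write the link of $\alpha$ as a bi-infinite path $(u_k)_{k \in \mathbb{Z}}$ with $\beta = u_0$; the definition of $\gamma$ forces $\gamma = u_{-1}$ and $t_\alpha(\gamma) = u_1$, while $t_\alpha$ translates indices by $2$, so $v = t_\alpha^m(\beta) = u_{2m}$. The requirement that $v$ be adjacent to $\delta = u_{-1}$ inside the link-path forces $|2m + 1| = 1$, whence $m = -1$ and $v = u_{-2}$. The analogous adjacency analysis performed inside the link of $\gamma$ yields $|n| = 1$, and the sign is fixed by the Farey-triangle identity $t_\alpha^{-1}(\beta) = t_\gamma(\beta)$. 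This identity (equivalent to $t_\alpha t_\gamma$ fixing $\beta$) holds for every Farey triangle in $R = S_{0,4}$ and can be checked by a direct $\mathrm{PSL}_2(\mathbb{Z})$-matrix computation, using that every Dehn twist on $S_{0,4}$ lifts to the square of the corresponding twist on the hyperelliptic double-cover torus. We conclude $(m, n) = (-1, 1)$; the subcase $\delta = t_\alpha(\gamma)$ is entirely symmetric and yields $(m, n) = (1, -1)$.

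The main obstacle is pinning down the sign of $n$ at the end. The adjacency argument inside the link of $\gamma$ cheaply produces $|n| = 1$, but matching the correct sign requires either a consistent orientation convention on the Farey tessellation or the explicit squared-twist identity above, which is responsible for $t_\alpha t_\gamma$ fixing $\beta$ on $S_{0,4}$ (a relation that fails on the double-cover torus).
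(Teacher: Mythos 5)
Your proof is correct, and it takes a genuinely more combinatorial route than the paper's. The paper realizes $\calf$ as the ideal Farey triangulation of the Poincar\'e disk and argues with the circular order at infinity: the four points $\alpha, \beta, \gamma, t_{\alpha}(\gamma)$ cut $\partial D$ into arcs $L_1,\ldots, L_4$; the orbit $t_{\alpha}^j(\beta)$ is confined to $L_3\cup L_4$, while for $\delta \notin \{ \gamma, t_{\alpha}(\gamma)\}$ the orbit $t_{\delta}^j(\beta)$ is trapped in $L_1$ or $L_2$, and a finer subdivision of $L_3$ isolates $(m, n)=(-1, 1)$. You replace the confinement argument by the observation that if $\alpha$ and $\delta$ were non-adjacent, then $\beta$ and $v=t_{\alpha}^m(\beta)=t_{\delta}^n(\beta)$ would be two common neighbours of $\{\alpha, \delta\}$ and hence adjacent (no chordless $4$-cycles), contradicting $i(t_{\alpha}^m(\beta), \beta)=4|m|>2$; this forces $\delta$ into the two-element set of common neighbours of $\alpha$ and $\beta$, namely $\{\gamma, t_{\alpha}(\gamma)\}$. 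You then replace the subdivision of $L_3$ by the index computation $|2m+1|=1$ in the link-path of $\alpha$ and the observation that $\beta$ and $v$ are the two path-neighbours of $\alpha$ in the link of $\gamma$, giving $|n|=1$. Both proofs bottom out in the same two ingredients: that $t_{\eta}$ translates the link-path of $\eta$ by two steps, and the sign-fixing identity $t_{\alpha}^{-1}(\beta)=t_{\gamma}(\beta)$, which the paper simply asserts when it introduces the vertex $\epsilon$ and which you propose to verify by lifting to the torus double cover and computing in $\mathrm{PSL}_2(\mathbb{Z})$ --- a legitimate check, and the one place where either write-up should really supply the computation. Your approach buys a shorter, convention-light argument; the paper's buys a picture in which the two subcases and the exceptional pairs $(\mp 1, \pm 1)$ are visibly symmetric.
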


\begin{proof}
Realize the graph $\calf$ geometrically as an ideal triangulation of the Poincar\'e disk $D$.
The set $\partial D\setminus \{ \alpha, \beta, \gamma, t_{\alpha}(\gamma)\}$ consists of the four connected components $L_1$, $L_2$, $L_3$ and $L_4$ as in Figure \ref{fig-farey} (b).
%====================================
\begin{figure}
\begin{center}
\includegraphics[width=11cm]{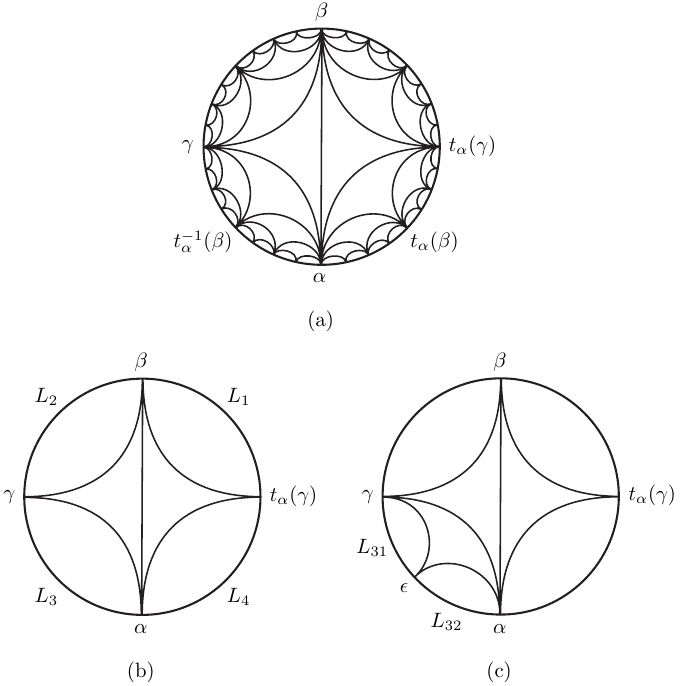}
\caption{}\label{fig-farey}
\end{center}
\end{figure}
%====================================
For any positive integer $j$, $t_{\alpha}^j(\beta)$ lies in $L_4$.
For any negative integer $k$, $t_{\alpha}^k(\beta)$ lies in $L_3$.

The vertex $\delta$ is in the link of $\beta$ in $\calf$ and distinct from $\alpha$.
Assuming that $\delta$ is equal to neither $\gamma$ nor $t_{\alpha}(\gamma)$, we deduce a contradiction.
The vertex $\delta$ then lies in either $L_1$ or $L_2$.
We have the two triangles in $\calf$ containing the edge $\{ \beta, \delta \}$.
Each of those triangles has the edge containing $\delta$ and distinct from $\{ \beta, \delta \}$.
Let $\tau$ and $\sigma$ denote those edges.
If $\delta$ lies in $L_1$, then the interior of $\tau$ and that of $\sigma$ lie in the side of the edge $\{ \beta, t_{\alpha}(\gamma)\}$ containing $\delta$.
The argument in the previous paragraph shows that for any non-zero integer $j$, $t_{\delta}^j(\beta)$ lies in $L_1$.
This contradicts the equality $t_{\alpha}^m(\beta)=t_{\delta}^n(\beta)$.
We can deduce a contradiction similarly if we assume that $\delta$ lies in $L_2$.
It turns out that $\delta$ is equal to either $\gamma$ or $t_{\alpha}(\gamma)$.

We first suppose the equality $\delta =\gamma$.
Let $\epsilon$ denote the vertex $t_{\alpha}^{-1}(\beta)=t_{\gamma}(\beta)$, which lies in $L_3$ and forms a triangle in $\calf$ together with $\alpha$ and $\gamma$.
Let $L_{31}$ and $L_{32}$ be the two components of $L_3\setminus \{ \epsilon \}$ so that the closure of $L_{31}$ contains $\gamma$ and that of $L_{32}$ contains $\alpha$ (see Figure \ref{fig-farey} (c)).
For any integer $j$ with $j>1$, $t_{\gamma}^j(\beta)$ lies in $L_{31}$.
For any integer $k$ with $k<-1$, $t_{\alpha}^k(\beta)$ lies in $L_{32}$.
For any negative integer $k$, $t_{\gamma}^k(\beta)$ lies in $L_2$.
The equality $t_{\alpha}^m(\beta)=t_{\gamma}^n(\beta)$ therefore implies $(m, n)=(-1, 1)$.
If we suppose the equality $\delta =t_{\alpha}(\gamma)$ in place of the equality $\delta =\gamma$, then we obtain $(m, n)=(1, -1)$ along a similar argument.
\end{proof}

We are now ready to prove Theorem \ref{thm-hex2-3path}.

\begin{proof}[Proof of Theorem \ref{thm-hex2-3path} (i)]
Let $\Pi$ be a hexagon in $\calt(S)$ of type 2.
Let $(a, b, c, d, e, f)$ be a 6-tuple defining $\Pi$ with $a$ and $d$ BP-vertices.
Pick a hexagon $\Omega$ in $\calt(S)$ of type 2 containing $f$, $a$, $b$ and $c$.
Let $(a, b, c, d', e', f)$ be the 6-tuple defining $\Omega$.
We put $\zeta =\zeta(\Pi)$ and $\eta =\zeta(\Omega)$.
By Theorem \ref{thm-hex2-m}, we have the non-zero integers $m$, $n$ with
\[f=t_{\zeta}^m(b)=t_{\eta}^n(b),\quad e=t_{\zeta}^m(c)\quad \textrm{and}\quad e'=t_{\eta}^n(c).\]
Applying Lemma \ref{lem-a0d0} to $\Pi$, we obtain the curve $a_0$ in $a$ and the curve $d_0$ in $d$ that are disjoint from any of $a,\ldots, f$.
In the component of $S_a$ homeomorphic to $S_{1, 2}$, the curve $d_0$ is the only curve disjoint from $b$ and $f$.
Applying Lemma \ref{lem-a0d0} to the hexagon $\Omega$, which contains $a$, $b$ and $f$, we see that $d_0$ is also contained in the BP $d'$.
Let $R$ denote the subsurface of $S$ filled by $b$ and $f$, which is homeomorphic to $S_{0, 4}$.
Any of $b$, $f$, $\zeta$ and $\eta$ is a curve in $R$.
By Lemmas \ref{lem-hex2-pen} and \ref{lem-zeta}, we have $i(b, \zeta)=i(b, \eta)=2$.

If $\zeta$ and $\eta$ are distinct, then by Lemma \ref{lem-04}, the equality $t_{\zeta}^m(b)=t_{\eta}^n(b)$ implies that either $(m, n)=(1, -1)$ or $(m, n)=(-1, 1)$.
It follows that either $f=t_{\zeta}(b)$ or $f=t_{\zeta}^{-1}(b)$.
Under the assumption that neither $f=t_{\zeta}(b)$ nor $f=t_{\zeta}^{-1}(b)$ holds, we therefore have the equality $\zeta =\eta$.
By Lemma \ref{lem-zeta-eq}, we then have $\Pi =\Omega$.
Theorem \ref{thm-hex2-3path} (i) is proved.
\end{proof}

\begin{proof}[Proof of Theorem \ref{thm-hex2-3path} (ii)]
Let $\Pi$ be a hexagon in $\calt(S)$ of type 2.
Let $(a, b, c, d, e, f)$ be a 6-tuple defining $\Pi$ with $a$ and $d$ BP-vertices.
Put $\zeta =\zeta(\Pi)$.
Let $a_0$ and $d_0$ be the curves in the BPs $a$ and $d$, respectively, obtained in Lemma \ref{lem-a0d0}.
We define curves $a_1$ and $d_1$ so that $a=\{ a_0, a_1\}$ and $d=\{ d_0, d_1\}$.
Let $R$ denote the subsurface of $S$ filled by $b$ and $f$, which is homeomorphic to $S_{0, 4}$ because $b$ and $f$ are disjoint from $a$ and $d_0$.
We set $\calf=\calf(R)$.
Any of $b$, $f$ and $\zeta$ is a curve in $R$.
%====================================
\begin{figure}
\begin{center}
\includegraphics[width=11cm]{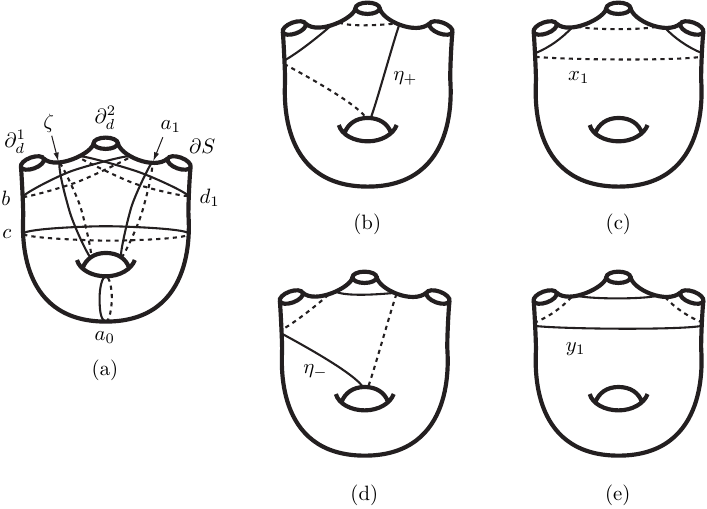}
\caption{}\label{fig-hex2}
\end{center}
\end{figure}
%====================================
By Lemmas \ref{lem-hex2-pen} and \ref{lem-zeta}, the curves $a_0$, $a_1$, $b$, $c$, $d_1$ and $\zeta$ in $S_{d_0}$ are drawn as in Figure \ref{fig-hex2} (a), where $\partial_d^1$ and $\partial_d^2$ denote the two boundary components of $S_{d_0}$ corresponding to $d_0$.

We first suppose the equality $f=t_{\zeta}(b)$.
We construct a hexagon in $\calt(S)$ of type 2 containing $f$, $a$, $b$ and $c$ and distinct from $\Pi$.
The assumption $f=t_{\zeta}(b)$ implies that there exists a unique curve $\eta_+$ in $R$ such that each of the triples $\{ b, \zeta, \eta_+ \}$ and $\{ f, \zeta, \eta_+\}$ forms a triangle in $\calf$, as in Figure \ref{fig-tri} (a).
%====================================
\begin{figure}
\begin{center}
\includegraphics[width=11cm]{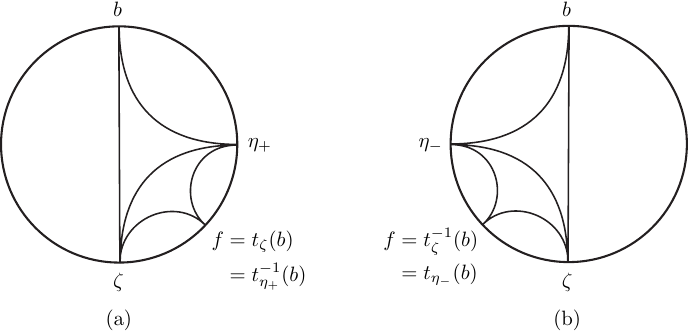}
\caption{}\label{fig-tri}
\end{center}
\end{figure}
%====================================
We have the equality $f=t_{\zeta}(b)=t_{\eta_+}^{-1}(b)$.
The curve $\eta_+$ is then determined as in Figure \ref{fig-hex2} (b). 
We define $x_1$ as the curve drawn in Figure \ref{fig-hex2} (c), and set $x=\{ d_0, x_1\}$.
The 5-tuple $(a, b, c, x_1, \{ a_0, \eta_+\})$ defines a pentagon in $\calt(S_{d_0})$. 
The 6-tuple $(a, b, c, x, t_{\eta_+}^{-1}(c), f)$ therefore defines a hexagon in $\calt(S)$ of type 2, denoted by $\Omega_+$.

Let $\Omega$ be a hexagon in $\calt(S)$ of type 2 containing $f$, $a$, $b$ and $c$.
Put $\eta =\zeta(\Omega)$.
Applying Theorem \ref{thm-hex2-m} to $\Omega$, we have a non-zero integer $n$ with $f=t_{\eta}^n(b)$.
In the first paragraph in the proof of Theorem \ref{thm-hex2-3path} (i), we showed that $\eta$ is also a curve in $R$, and we have $i(b, \zeta)=i(b, \eta)=2$.
The equality $f=t_{\zeta}(b)=t_{\eta}^n(b)$ and Lemma \ref{lem-04} imply that either $\eta =\zeta$ or $\eta =\eta_+$ and $n=-1$.
By Lemma \ref{lem-zeta-eq}, we have either $\Omega =\Pi$ or $\Omega =\Omega_+$.
Theorem \ref{thm-hex2-3path} (ii) is therefore proved if $f=t_{\zeta}(b)$.

We next suppose the equality $f=t_{\zeta}^{-1}(b)$.
There exists a unique curve $\eta_-$ in $R$ such that each of the triples $\{ b, \zeta, \eta_- \}$ and $\{ f, \zeta, \eta_-\}$ forms a triangle in $\calf$, as in Figure \ref{fig-tri} (b).
We have the equality $f=t_{\zeta}^{-1}(b)=t_{\eta_-}(b)$.
The curve $\eta_-$ is then determined as in Figure \ref{fig-hex2} (d).
We define a curve $y_1$ as in Figure \ref{fig-hex2} (e), and set $y=\{ d_0, y_1\}$.
The 5-tuple $(a, b, c, y_1, \{ a_0, \eta_-\})$ defines a pentagon in $\calt(S_{d_0})$. 
The 6-tuple $(a, b, c, y, t_{\eta_-}(c), f)$ defines a hexagon in $\calt(S)$ of type 2, denoted by $\Omega_-$.
As in the previous paragraph, we can show that if $\Omega$ is a hexagon in $\calt(S)$ of type 2 containing $f$, $a$, $b$ and $c$, then either $\Omega =\Pi$ or $\Omega =\Omega_-$.
\end{proof}

In the rest of this section, we observe hexagons in $\calt(S)$ sharing a 3-path with a given hexagon of type 1 or type 2.
Note that a hexagon in $\calt(S)$ has exactly two BP-vertices if and only if it is of either type 1 or type 2.

\begin{lem}\label{lem-hex1-hex}
Let $\Pi$ be a hexagon in $\calt(S)$ of type 1, and let $(a, b, c, d, e, f)$ be a 6-tuple defining $\Pi$ with $b$ and $f$ BP-vertices.
Let $\Omega$ be a hexagon in $\calt(S)$ such that $\Pi \cap \Omega$ contains a 3-path.
Then the following assertions hold:
\begin{enumerate}
\item The hexagon $\Omega$ contains at least one of $b$ and $f$.
\item If $\Omega$ contains exactly one of $b$ and $f$, then $\Omega$ has exactly two BP-vertices.
\item If $\Omega$ contains both $b$ and $f$, then the equality $\Omega =\Pi$ holds.
\end{enumerate}
\end{lem}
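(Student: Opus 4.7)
The plan is to prove (i) by a direct enumeration, (ii) by a position-counting argument that excludes a third BP, and (iii) by a case analysis on how $b$ and $f$ sit in $\Omega$, combined with Lemma \ref{lem-hex1-hex1} and a lifting argument through the shared curve $\alpha$ of $b$ and $f$ to $\calc_s(S_\alpha)$. For (i), I enumerate the six cyclic 3-paths of $\Pi$:
\[
(a,b,c,d),\ (b,c,d,e),\ (c,d,e,f),\ (d,e,f,a),\ (e,f,a,b),\ (f,a,b,c),
\]
and observe each contains at least one of $b$, $f$; hence any 3-path $L$ in $\Pi \cap \Omega$ has a vertex in $\{b, f\}$, giving (i). For (ii), I may assume without loss of generality that $\Omega$ contains $b$ but not $f$, so $L \in \{(a,b,c,d), (b,c,d,e)\}$. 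Placing $L$ at positions $1$--$4$ of the 6-cycle of $\Omega$, three of these four vertices are h-vertices of $\Pi$. If $\Omega$ were to have three BPs, they would occupy three alternating positions of the 6-cycle --- either $\{1,3,5\}$ or $\{2,4,6\}$ --- and in either choice some h-vertex of $L$ would be forced to be a BP, a contradiction. Hence $\Omega$ has exactly two BPs.

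For (iii), I assume $\Omega$ contains both $b$ and $f$. If $L$ contains exactly one of them, say $b$, then $f$ occupies one of the two extra positions of $\Omega$, and the non-adjacencies $i(d,f), i(e,f) > 0$ inherited from $\Pi$ pin $f$ into a unique admissible position. As in (ii), a type-3 configuration is blocked, so $\Omega$ has two BPs at cyclic distance $2$, i.e., $\Omega$ is of type 1, and Lemma \ref{lem-hex1-hex1} yields $\Omega = \Pi$. If $L$ contains both $b$ and $f$, then $L \in \{(e,f,a,b), (f,a,b,c)\}$; by symmetry take $L = (e,f,a,b)$, so $\Omega = (e, f, a, b, v_5, v_6)$ with $v_5$ forced to be an h-vertex (being adjacent to the BP $b$). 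Type 2 is excluded because Lemma \ref{lem-a0d0} would require disjoint distinguished curves in the BPs $b$ and $f$, whose only possible candidates are $b_1$ and $f_1$ themselves, yet $i(b_1, f_1) = i(b, f) > 0$. If $v_6$ is also an h-vertex, then $\Omega$ is of type 1 and Lemma \ref{lem-hex1-hex1} again yields $\Omega = \Pi$.

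The main obstacle is excluding the remaining possibility that $v_6$ is a third BP. Here I exploit the realization $\Pi = \lambda_\alpha(\Pi_0)$ with $\Pi_0 = (a, b_1, c, d, e, f_1)$ a hexagon in $\calc_s(S_\alpha)$ and $b = \{\alpha, b_1\}$, $f = \{\alpha, f_1\}$. A short intersection-number computation rules out $v_6$ sharing $f_1$ or $b_1$: for example, $f_1 \in v_6$ together with the adjacency $v_5 \sim v_6$ would force $i(v_5, f_1) = 0$, contradicting the non-adjacency $v_5 \not\sim f$ combined with $v_5 \sim b$ (which yields $i(v_5, \alpha) = 0$ and hence $i(v_5, f) = i(v_5, f_1) > 0$); the case $b_1 \in v_6$ is symmetric. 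The remaining subcase, in which $v_6$ shares no curve of $\{\alpha, b_1, f_1\}$ with $b$ or $f$, is handled by a subsurface analysis using the fact that $v_6$ must be disjoint from the h-curves $e$ and $v_5$ (both of which are disjoint from $\alpha$) while intersecting $a$ and the BPs $b, f$; this forces $v_6$ to contain $\alpha$. Writing $v_6 = \{\alpha, v_{6,1}\}$, I lift $\Omega$ to $\Omega' = (e, f_1, a, b_1, v_5, v_{6,1})$ in $\calc_s(S_\alpha)$, verify by direct translation of adjacencies that $\Omega'$ is a hexagon there, and apply Lemma \ref{lem-13} to the 3-path $(e, f_1, a, b_1)$ shared with $\Pi_0$ to conclude $\Omega' = \Pi_0$. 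In particular $v_{6,1} = d$; but $d$ is separating in $S$, whereas $v_{6,1}$ must be non-separating as a component of the BP $v_6$, a contradiction. This excludes the type-3 possibility and completes (iii).
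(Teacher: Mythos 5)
Your overall architecture matches the paper's: (i) by inspecting the four-vertex windows of the 6-cycle, (ii) by excluding a type-3 configuration, and (iii) by reducing to a 6-tuple $(e,f,a,b,v_5,v_6)$ with $v_5$ forced to be an h-vertex, showing the problematic vertex $v_6$, if a BP, would have to contain $\alpha$, and then lifting to $\calc_s(S_\alpha)$. Your endgame is a legitimate variant of the paper's: you invoke Lemma \ref{lem-13} on the shared 3-path $(e,f_1,a,b_1)$ to force the lifted hexagon to equal $\Pi_0$ and get a contradiction from $v_{6,1}=d$ being separating, whereas the paper contradicts Theorem \ref{thm-13} by noting all three p-curves of the lifted hexagon would cut off pairs of pants containing $\partial S$; both routes work, and your exclusion of $b_1,f_1\in v_6$ by intersection numbers is correct.

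However, there is a genuine gap at the crux: the claim that a BP $v_6$ adjacent to $e$ and $v_5$ but intersecting $a$, $b$, $f$ must contain $\alpha$ is only asserted (``a subsurface analysis \dots forces $v_6$ to contain $\alpha$''), not proved. This is exactly the step where the paper does its real topological work: it passes to the non-handle component $R$ of the surface cut along the known adjacent h-curve, shows the arcs of the unknown h-curve in $R$ are mutually isotopic and non-separating, uses disjointness from the BP containing $\alpha$ to keep those arcs off the component $\mathfrak{a}$ representing $\alpha$, and then identifies the two curves of the unknown BP with boundary components of a regular neighborhood of (arcs)$\cup$(h-curve), forcing one of them to be isotopic to $\mathfrak{a}$. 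The facts you list (disjointness from $e$, $v_5$, and $i(v_5,\alpha)=i(e,\alpha)=0$) are the right ingredients but do not by themselves yield the conclusion; without this paragraph the type-3 possibility is not excluded and (iii) is unproved. Two smaller points: in (ii) your position count only rules out \emph{three} BP-vertices, so to get ``exactly two'' you still need Lemmas \ref{6-h_hex} and \ref{one-BP_hex} (zero is impossible since $b\in\Omega$, but one must be excluded by Lemma \ref{one-BP_hex}); and your type-2 exclusion in (iii) is both unnecessary (the BPs $b,f$ sit at cyclic distance $2$ in $\Omega$, while a type-2 hexagon has its two BP-vertices antipodal) and misstated — $b_1$ and $f_1$ are not admissible candidates for the curves of Lemma \ref{lem-a0d0} precisely because each intersects the other BP; the forced candidate would be $\alpha$ for both, which violates distinctness.
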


\begin{proof}
Assertion (i) holds because any 3-path in $\Pi$ contains at least one of $b$ and $f$. 
If $\Omega$ contains exactly one of $b$ and $f$, then $\Omega$ contains two adjacent h-vertices, and thus has exactly two BP-vertices by Lemmas \ref{6-h_hex} and \ref{one-BP_hex}. 
Assertion (ii) follows.

Assuming that $\Omega$ contains $b$ and $f$, we prove assertion (iii).
Without loss of generality, we may assume that $\Pi$ and $\Omega$ contain $f$, $a$, $b$ and $c$.
Let $(a, b, c, d', e', f)$ be the 6-tuple defining $\Omega$.
The vertex $e'$ is an h-vertex because $f$ is a BP-vertex.
Assuming that $d'$ is a BP-vertex, we deduce a contradiction.
Let $\alpha$ be the curve contained in $b$ and $f$.

Choose representatives $A,\ldots, F$, $D'$ and $E'$ of $a,\ldots, f$, $d'$ and $e'$, respectively, such that any two of them intersect minimally.
Let $\mathfrak{a}$ denote the component of $F$ whose isotopy class is $\alpha$.
Let $R$ denote the component of $S_C$ that is not a handle. 
Note that $\mathfrak{a}$ is a curve in $R$.
Since $D'$ is a BP in $R$, the intersection $E'\cap R$ consists of mutually isotopic, essential simple arcs in $R$ which are non-separating in $R$ (see Figure \ref{fig-prime}).
%====================================
\begin{figure}
\begin{center}
\includegraphics[width=4cm]{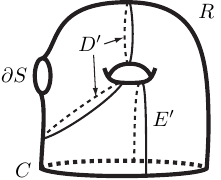}
\caption{}\label{fig-prime}
\end{center}
\end{figure}
%==================================== 
It follows from $E'\cap F=\emptyset$ that $E'\cap R$ is disjoint from $\mathfrak{a}$. 
Since the two components of $D'$ are boundary components of a regular neighborhood of $(E'\cap R) \cup C$ in $R$, one of components of $D'$ is isotopic to $\mathfrak{a}$.
It turns out that $d'$ contains $\alpha$.

We define curves $b_1$, $d_1'$ and $f_1$ so that $b=\{\alpha, b_1\}$, $d'=\{\alpha, d_1'\}$ and $f=\{\alpha, f_1\}$. 
The 6-tuple $(a, b_1, c, d_1', e, f_1)$ then defines a hexagon in $\calc_s(S_{\alpha})$ such that each of the curves $b_1$, $d_1'$ and $f_1$ in $S_{\alpha}$ cuts off a pair of pants containing $\partial S$ from $S_{\alpha}$.
This is a contradiction because by Theorem \ref{thm-13}, for any hexagon $H$ in $\calc_s(S_{\alpha})$, there is a p-curve in $H$ cutting off a pair of pants containing the two boundary components of $S_{\alpha}$ that correspond to $\alpha$, from $S_{\alpha}$.

We proved that $d'$ is an h-vertex.
It follows that $\Omega$ is of type 1.
By Lemma \ref{lem-hex1-hex1}, we have the equality $\Omega =\Pi$.
\end{proof}

Finally, we obtain the following:

\begin{thm}\label{thm-hex1-hex2}
Let $\Pi$ be a hexagon in $\calt(S)$.
Then the following assertions hold:
\begin{enumerate}
\item If $\Pi$ is of type 1, then for any 3-path $K$ in $\Pi$ containing the two BP-vertices of $\Pi$, there exists no hexagon in $\calt(S)$ distinct from $\Pi$ and containing $K$.
\item If $\Pi$ is of type 2, then for any 3-path $L$ in $\Pi$ containing exactly one BP-vertex of $\Pi$, there exist at most two hexagons in $\calt(S)$ distinct from $\Pi$ and containing $L$.
\end{enumerate}
\end{thm}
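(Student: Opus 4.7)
The plan is to deduce both assertions from the structural results already established in Sections \ref{sec-type1} and \ref{sec-type2}.

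Assertion (i) is an immediate application of Lemma \ref{lem-hex1-hex} (iii). If $\Omega$ is a hexagon in $\calt(S)$ containing $K$, then $\Omega$ contains both BP-vertices of $\Pi$, since these are among the four vertices of $K$. Thus $\Pi \cap \Omega$ contains the 3-path $K$, and Lemma \ref{lem-hex1-hex} (iii) forces $\Omega = \Pi$.

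For assertion (ii), let $(a, b, c, d, e, f)$ be a 6-tuple defining $\Pi$ with $a$ and $d$ BP-vertices. The four 3-paths in $\Pi$ containing exactly one BP-vertex are permuted transitively by the symmetries of $\Pi$ (the rotation $(a,b,c,d,e,f)\mapsto(d,e,f,a,b,c)$ and the reflection $(a,b,c,d,e,f)\mapsto(a,f,e,d,c,b)$, both of which preserve the type of $\Pi$), so I may assume without loss of generality that $L = (f, a, b, c)$. Let $\Omega$ be a hexagon distinct from $\Pi$ containing $L$, and write $\Omega = (f, a, b, c, x, y)$ for vertices $x$, $y$ to be determined. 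I would first show that $\Omega$ is of type 1 or type 2. By Lemmas \ref{6-h_hex} and \ref{one-BP_hex}, $\Omega$ has at least two BP-vertices; since $\calt(S)$ has no edge joining two BP-vertices, it has at most three. The vertex $a$ is already a BP-vertex of $\Omega$, while $f$, $b$, $c$ are h-vertices of $\Pi$ and hence h-vertices. So the remaining BP-vertices of $\Omega$ must lie in $\{x, y\}$. A hexagon with three BP-vertices would necessarily have them at alternate cyclic positions, forcing one of $f, b, c$ to be a BP-vertex, which is impossible. Hence $\Omega$ has exactly two BP-vertices. In a type-1 configuration the two BP-vertices sit at cyclic distance $2$, which singles out $y$ as the second BP-vertex; in a type-2 configuration they sit at cyclic distance $3$, which singles out $x$.

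Finally, I would invoke Lemma \ref{lem-hex1-hex1} to conclude that at most one hexagon of type 1 contains $L$ (any two such hexagons would share the 3-path $L$ and hence coincide), and Theorem \ref{thm-hex2-3path} to conclude that at most one hexagon of type 2 distinct from $\Pi$ contains $\{f, a, b, c\}$. Summing the two cases gives at most two hexagons distinct from $\Pi$ containing $L$. Essentially all the heavy lifting is done by the earlier results; the only new ingredient is the type-classification of $\Omega$, which falls out from the BP-placement constraints described above.
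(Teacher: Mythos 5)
Your proposal is correct and follows essentially the same route as the paper: assertion (i) is read off from Lemma \ref{lem-hex1-hex} (iii), and assertion (ii) is obtained by first ruling out type 3 for any hexagon containing $L$ (via Lemmas \ref{6-h_hex} and \ref{one-BP_hex} together with the two adjacent h-vertices in $L$) and then invoking Lemma \ref{lem-hex1-hex1} for the type-1 count and Theorem \ref{thm-hex2-3path} for the type-2 count. Your explicit symmetry reduction to $L=(f,a,b,c)$ and the placement argument for the second BP-vertex of $\Omega$ are harmless extra detail that the paper leaves implicit.
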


\begin{proof}
Assertion (i) follows from Lemma \ref{lem-hex1-hex} (iii).
Suppose that $\Pi$ is of type 2, and pick a 3-path $L$ in $\Pi$ containing exactly one BP-vertex of $\Pi$. 
By Lemmas \ref{6-h_hex} and \ref{one-BP_hex}, any hexagon in $\calt(S)$ has at least two BP-vertices.
Any hexagon in $\calt(S)$ containing $L$ is thus of either type 1 or type 2 because $L$ contains two adjacent h-vertices.
By Lemma \ref{lem-hex1-hex1}, the number of hexagons in $\calt(S)$ of type 1 containing $L$ is at most 1.
By Theorem \ref{thm-hex2-3path}, the number of hexagons in $\calt(S)$ of type 2 distinct from $\Pi$ and containing $L$ is at most 1.
Assertion (ii) is therefore proved.
\end{proof}

\begin{rem}
In addition to Theorem \ref{thm-hex1-hex2}, we have the following description of the number of hexagons sharing a 3-path with a given hexagon of type 1 or type 2, whose proof is not presented here because it is not used in the rest of the paper.

Let $\Pi$ be a hexagon in $\calt(S)$ defined by a 6-tuple $(a, b, c, d, e, f)$.
Assume that $\Pi$ is of type 1 with $b$ and $f$ BP-vertices.
Let $K$ be a 3-path in $\Pi$ containing exactly one of $b$ and $f$.
If $K$ does not contain $a$, then $\Pi$ is the only hexagon in $\calt(S)$ containing $K$ by Lemma \ref{lem-hex1-hex1}.
If $K$ contains $a$, then any hexagon in $\calt(S)$ distinct from $\Pi$ and containing $K$ is of type 2 by Lemma \ref{lem-hex1-hex1}, and there exist exactly two hexagons in $\calt(S)$ of type 2 containing $K$.

Those two hexagons are drawn in Figure \ref{fig-hex-rem}.
%====================================
\begin{figure}
\begin{center}
\includegraphics[width=12cm]{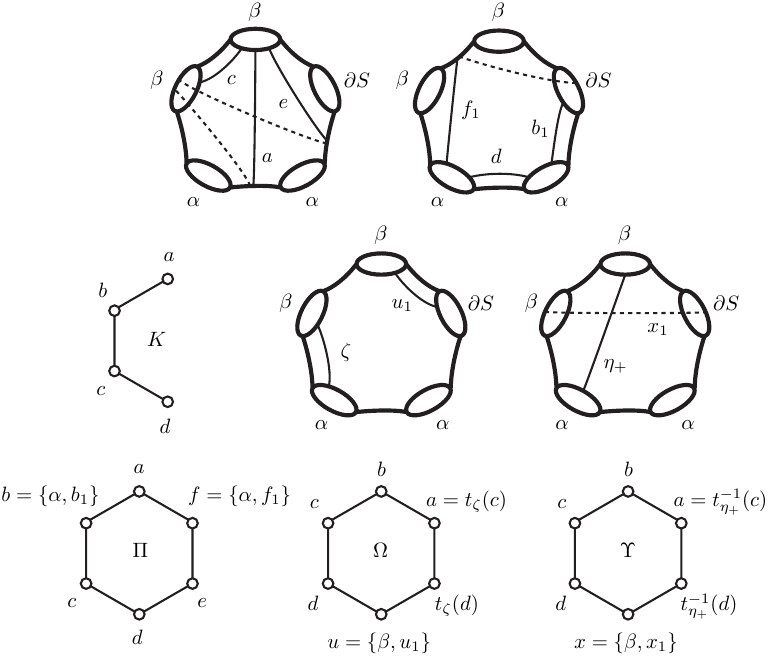}
\caption{}\label{fig-hex-rem}
\end{center}
\end{figure}
%==================================== 
Let $\alpha$ and $\beta$ be disjoint and non-isotopic curves in $S$ such that the surface obtained by cutting $S$ along $\alpha \cup \beta$, denoted by $T$, is connected.
Any essential simple arc $l$ in $T$ connecting two distinct boundary components $\partial_1$, $\partial_2$ associates a curve $c(l)$ in $T$.
Namely, $c(l)$ is defined as a boundary component of a regular neighborhood of $l\cup \partial_1\cup \partial_2$ in $T$.
In Figure \ref{fig-hex-rem}, the surface $T$ is drawn, and in place of curves, essential simple arcs associating them are drawn.
Given a hexagon $\Pi$ in $\calt(S)$ of type 1 and a 3-path $K$ in $\Pi$ as drawn in Figure \ref{fig-hex-rem}, we have the two hexagons $\Omega$, $\Upsilon$ in $\calt(S)$ of type 2 containing $K$.
It follows from Theorem \ref{thm-hex1-hex2} (ii) that there is no other hexagon in $\calt(S)$ containing $K$.

We next assume that $\Pi$ is of type 2 with $a$ and $d$ BP-vertices.
Let $\zeta$ be the curve $\zeta(\Pi)$ obtained in Lemma \ref{lem-zeta}.
Let $L$ be a 3-path in $\Pi$.
Any hexagon in $\calt(S)$ containing $L$ is either of type 1 or type 2 because $L$ contains two adjacent h-vertices.
We first suppose that $L$ contains exactly one of $a$ and $d$.
If either $f=t_{\zeta}(b)$ or $f=t_{\zeta}^{-1}(b)$, then there exist exactly two hexagons in $\calt(S)$ distinct from $\Pi$ and containing $L$, one of which is of type 1 and another of which is of type 2.
We omit to describe those two hexagons because they are obtained by using Figure \ref{fig-hex-rem} after exchanging symbols appropriately. 
If neither $f=t_{\zeta}(b)$ nor $f=t_{\zeta}^{-1}(b)$, then $\Pi$ is the only hexagon in $\calt(S)$ containing $L$.
Finally, we suppose that $L$ contains $a$ and $d$.
In the fourth paragraph of this section, we have observed that there are infinitely many hexagons in $\calt(S)$ of type 2 containing $L$.
\end{rem}

%%%%%%%%%%%%%%%%%%%%%%%%%%%%%%%%%%%%%%%%

\section{Hexagons of type 3}\label{sec-type3}

Throughout this section, we set $S=S_{2,1}$.
We say that a hexagon in $\calt(S)$ is of {\it type 3} if it contains exactly three BP-vertices.
In this section, we focus on the hexagons of type 3 drawn in Figure \ref{fig-hex3}, and present their property that no hexagon of type 1 or type 2 satisfies.
%====================================
\begin{figure}
\begin{center}
\includegraphics[width=10cm]{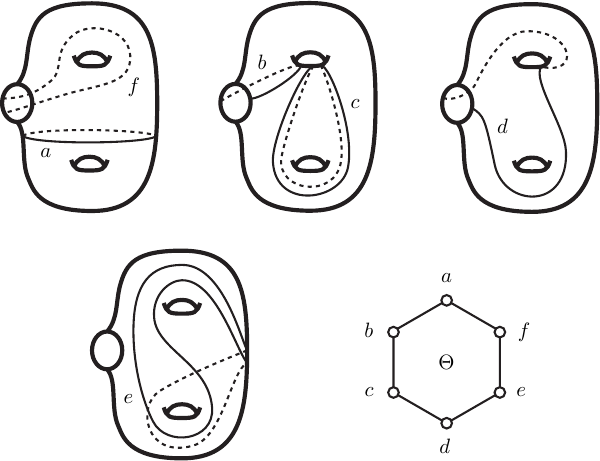}
\caption{}\label{fig-hex3}
\end{center}
\end{figure}
%====================================

We note that there is a one-to-one correspondence between elements of $V_{bp}(S)$ and elements of $V_a(S)$ whose representatives are non-separating in $S$.
In fact, each BP $b$ in $S$ associates an essential simple arc in $S$ contained in the pair of pants cut off by $b$ from $S$, which is non-separating in $S$ and is uniquely determined up to isotopy.
Conversely, given an essential simple arc $l$ in $S$ which is non-separating in $S$, one obtains the BP in $S$ whose curves are boundary components of a regular neighborhood of $l\cup \partial S$ in $S$. 

In Figure \ref{fig-hex3}, in place of BPs, essential simple arcs corresponding to them are drawn.
This replacement makes the drawing much plainer.
We define $a$, $c$ and $e$ as the h-curves in $S$ in Figure \ref{fig-hex3}, and define $b$, $d$ and $f$ as the BPs in $S$ corresponding to the arcs in Figure \ref{fig-hex3}.
Let $\Theta$ denote the hexagon in $\calt(S)$ of type 3 defined by the 6-tuple $(a, b, c, d, e, f)$.
Let $\alpha$, $\beta$ and $\gamma$ be the non-separating curves in Figure \ref{fig-ns}.
%====================================
\begin{figure}
\begin{center}
\includegraphics[width=10cm]{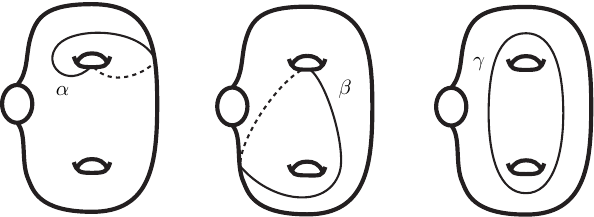}
\caption{}\label{fig-ns}
\end{center}
\end{figure}
%====================================
The curve $\alpha$ is disjoint from $a$ and $d$, the curve $\beta$ is disjoint from $b$ and $e$, and the curve $\gamma$ is disjoint from $c$ and $f$.
The following lemma is in contrast with Theorem \ref{thm-hex1-hex2} on hexagons of type 1 and type 2.

\begin{prop}\label{prop-hex3}
For any 3-path $L$ in $\Theta$, there exist infinitely many hexagons in $\calt(S)$ containing $L$.
\end{prop}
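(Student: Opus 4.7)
The plan is to find, for each 3-path $L$ in $\Theta$, an essential simple closed curve $\eta=\eta(L)$ in $S$ satisfying
\begin{enumerate}
\item[(a)] $i(\eta, v)=0$ for every vertex $v$ of $L$, and
\item[(b)] $i(\eta, w)\neq 0$ for at least one vertex $w$ of $\Theta$ not in $L$.
\end{enumerate}
Given such an $\eta$, the Dehn twist $t_{\eta}\in \mod^*(S)$ fixes every vertex of $L$ by (a), while by (b) together with the standard fact that a Dehn twist has infinite order on any curve it meets essentially, the orbit $\{t_{\eta}^n(w):n\in \mathbb{Z}\}$ is infinite. Consequently the hexagons $t_{\eta}^n(\Theta)$ for $n\in \mathbb{Z}$ are pairwise distinct hexagons in $\calt(S)$ (they differ in the image of $w$) and each contains $L$, which proves the proposition.

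The crux of the argument is therefore the construction of $\eta$. Here two observations help. First, the hexagon $\Theta$ admits visible symmetries: in Figures \ref{fig-hex3} and \ref{fig-ns}, a threefold rotation cyclically permutes the triples $(a,c,e)$, $(b,d,f)$ and $(\alpha,\beta,\gamma)$, and a reflection exchanges $a\leftrightarrow d$, $b\leftrightarrow e$, $c\leftrightarrow f$. Both symmetries are realized by elements of $\mod^*(S)$, and together they act transitively on the six 3-paths of $\Theta$. It therefore suffices to treat a single representative 3-path, say $L_0=(a,b,c,d)$. Second, for this specific 3-path, an Euler-characteristic calculation shows that a regular neighbourhood of a minimal representative of $a\cup b\cup c\cup d$ (with $b$ and $d$ understood via their associated non-separating arcs, as drawn in Figure \ref{fig-hex3}) does not fill $S$; any essential simple closed curve lying in the complementary subsurface is automatically disjoint from $a,b,c,d$, and one may further arrange for such a curve to meet $e$ essentially, since $e$ is not contained in that complement. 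The desired curve $\eta=\eta(L_0)$ is then read off from Figure \ref{fig-hex3}.

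The main obstacle is the last step: exhibiting $\eta(L_0)$ explicitly from the figure and checking its intersection properties with all six vertices of $\Theta$. Once this is done, the transport of $\eta(L_0)$ by the symmetries above produces $\eta(L)$ for every 3-path $L$, and the Dehn twist construction immediately yields infinitely many hexagons containing any given 3-path of $\Theta$. This stands in sharp contrast to the behaviour of hexagons of type 1 and type 2 recorded in Theorem \ref{thm-hex1-hex2}, and is precisely the distinguishing feature of hexagons of type 3 that will be exploited later.
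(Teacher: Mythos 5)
Your Dehn-twist strategy is natural, but it hinges entirely on a claim you do not prove and which I believe is false: the existence of an essential simple closed curve $\eta$ disjoint from \emph{all four} vertices of a 3-path such as $(a,b,c,d)$ (disjointness from a BP-vertex meaning disjointness from both of its curves). The curves the paper supplies in Figure \ref{fig-ns} are each disjoint from only an \emph{opposite pair} of vertices of $\Theta$ --- $\alpha$ from $a$ and $d$, the two endpoints of the path --- and nothing is claimed about $\alpha$ versus the middle vertices $b$ and $c$. This is not an accident. In a hexagon the vertices $a,c$ intersect, as do $b,d$ and $a,d$. Since the h-curve $a$ is disjoint from the BP $b$, both curves of $b$ lie outside the handle $H_a$ cut off by $a$, and $S$ cut along $a\cup b$ leaves only two pairs of pants besides $H_a$; hence any curve disjoint from $a$ and $b$ either lies in $H_a$ or is isotopic to $a$, $b_1$ or $b_2$. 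Running the same argument with $c$ in place of $a$, a curve disjoint from $a$, $b$ and $c$ is forced into $H_a\cap H_c$ or into the finite list $\{a,c,b_1,b_2\}$; none of the listed curves is disjoint from both $d$ and the intersecting pair $\{a,c\}$ (each $b_i$ meets $d$ because the algebraic intersection number of the homology classes of $b$ and $d$ is $\pm 1$ when the arcs $l_b,l_d$ cross once), and in the configuration of Figure \ref{fig-hex3} the region $H_a\cap H_c$ carries no essential non-peripheral curve that survives disjointness from $d$. Your ``Euler-characteristic calculation'' is also pointing the wrong way: the graph $a\cup c\cup l_b\cup l_d\cup\partial S$ has Euler characteristic at most $-8$, far below $\chi(S)=-3$, which is consistent with filling rather than evidence against it.

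The paper's proof avoids this obstruction by \emph{not} moving the hexagon by a mapping class fixing $L$ pointwise. It keeps $a,b,c,d$ literally fixed, replaces only $e$ and $f$ by $t_{\alpha}^{n}(e)$ and $t_{\alpha}^{n}(f)$ (which is legitimate because $\alpha$ is disjoint from the two endpoints $a$ and $d$, so the new pairs $\{d,t_{\alpha}^{n}(e)\}$ and $\{t_{\alpha}^{n}(f),a\}$ are still edges), and then invokes Lemma \ref{lem-hex3} to verify that the resulting 6-tuple satisfies all the non-adjacency conditions of a hexagon --- the whole point of that lemma being that everything reduces to the single condition $t_{\alpha}^{n}(f)\neq b$, which fails for at most one $n$. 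Your transitivity-of-symmetries reduction to a single 3-path is fine (the paper handles the other cases by relabelling with $\beta$ and $\gamma$), and your argument that the hexagons would be pairwise distinct is fine, but without the curve $\eta$ the construction never gets off the ground. To repair the proof you would need either to exhibit $\eta$ explicitly and check six disjointness conditions against Figure \ref{fig-hex3} (which I do not believe can be done), or to switch to the paper's device of twisting only the two vertices outside the path and supplying a substitute for Lemma \ref{lem-hex3}.
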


Before proving this proposition, we show the following:

\begin{lem}\label{lem-hex3}
Let $v_1$, $v_3$ and $v_5$ be h-vertices of $\calt(S)$, and $v_2$, $v_4$ and $v_6$ BP-vertices of $\calt(S)$ such that
\begin{itemize}
\item for any $j$ mod 6, $v_j$ and $v_{j+1}$ are adjacent, for any $k=1, 2$, $v_k$ and $v_{k+2}$ are distinct and not adjacent, and $v_1$ and $v_4$ are not adjacent; and
\item we have $v_6\neq v_2$.
\end{itemize}
Then the 6-tuple $(v_1,\ldots, v_6)$ defines a hexagon in $\calt(S)$.
\end{lem}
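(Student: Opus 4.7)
The strategy is to verify the three defining properties of a hexagon on the 6-tuple $(v_1,\dots,v_6)$: (a) the six vertices are pairwise distinct, (b) $v_j$ and $v_{j+2}$ are non-adjacent for every $j$ modulo $6$, and (c) $v_j$ and $v_{j+3}$ are non-adjacent for every $j$ modulo $6$. Since $\calt(S_{2,1})$ is one-dimensional, it contains no $2$-simplex; hence from the two edges $v_{j-1}v_j$ and $v_jv_{j+1}$ alone we deduce that $v_{j-1}$ and $v_{j+1}$ are non-adjacent whenever they are distinct, so condition (b) reduces to the corresponding distinctness statements.

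Distinctness of opposite-parity pairs is automatic from $v_1,v_3,v_5\in V_s(S)$ and $v_2,v_4,v_6\in V_{bp}(S)$. The same-parity pairs $v_1,v_3$ and $v_2,v_4$ are given, and so is $v_6,v_2$. The pairs $v_4,v_6$ and $v_5,v_1$ are excluded directly by the hypothesis that $v_1$ and $v_4$ are non-adjacent: if $v_4=v_6$ then the edge $v_6v_1$ would give an edge $v_4v_1$, and if $v_5=v_1$ then the edge $v_4v_5$ would give an edge $v_4v_1$. The only nontrivial distinctness is $v_3\neq v_5$. Assuming $v_3=v_5$, the four vertices $v_1,v_2,v_3,v_6$ are pairwise distinct and the walk $v_1\,v_2\,v_3\,v_6\,v_1$ is a simple $4$-cycle in $\calt(S)$, whose diagonals $v_1v_3$ and $v_2v_6$ are both non-edges---the first by hypothesis, the second by one-dimensionality applied to the edges $v_1v_2$ and $v_1v_6$ together with $v_6\neq v_2$. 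I would rule such a $4$-cycle out by direct surface topology: pick minimally intersecting representatives $A,C$ of $v_1,v_3$, note that any BP disjoint from $v_1$ lies in $R:=S\setminus H(v_1)\cong S_{1,2}$, and argue that the arcs $C\cap R$ constrain the BPs in $R$ disjoint from $C$ tightly enough that at most one isotopy class survives, contradicting $v_2\neq v_6$.

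Once (a) and (b) are in hand, (c) reduces to the diagonals $v_2v_5$ and $v_3v_6$, since $v_1v_4$ is a hypothesis. If $v_2$ were adjacent to $v_5$, then $(v_2,v_3,v_4,v_5)$ would be a simple $4$-cycle of the same alternating type, with non-adjacent diagonals $v_2v_4$ (by hypothesis) and $v_3v_5$ (from the previous step); the same subsurface exclusion rules this out, and likewise for $v_3v_6$.

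The main obstacle is the geometric exclusion of simple $4$-cycles of the above alternating form in $\calt(S_{2,1})$, which amounts to showing that two non-adjacent h-vertices admit at most one common BP-neighbour in $\calt(S)$. Although the appendix establishes the much stronger statement that $\calt(S)$ contains no simple cycle of length at most $5$, that result is explicitly withheld from the main proof, so this lemma must be carried out by the direct arc analysis inside $R\cong S_{1,2}$ sketched above, using the correspondence between BPs in $S$ and essential simple arcs to reduce the count of BPs disjoint from $v_3$ inside $R$.
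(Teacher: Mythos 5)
Your argument is correct and reaches the conclusion by a genuinely different route. The paper verifies each of the nine required non-adjacencies one at a time, using two surface-topology facts stated in the course of the proof: that two distinct BP-vertices adjacent to a common h-vertex must intersect (and dually for h-vertices adjacent to a common BP-vertex), and the ``general fact'' that two distinct h-curves admit at most one common disjoint BP. You instead exploit one-dimensionality of $\calt(S_{2,1})$ (asserted in Remark 1.3 and used freely in the Appendix, though not in the paper's own proof of this lemma) to collapse all the $v_j,v_{j+2}$ non-adjacencies into distinctness statements, and you then funnel the three remaining obligations ($v_3\neq v_5$, $v_2v_5$, $v_3v_6$) through a single lemma: two distinct non-adjacent h-vertices have at most one common BP-neighbour. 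That is exactly the paper's ``general fact,'' which the paper likewise asserts without proof, so your deferral of it leaves you at the same level of rigor as the original; and your sketched justification is sound --- in particular your claim that any BP disjoint from an h-curve $v_1$ has both curves in $R=S\setminus H(v_1)$ is correct (a curve of the BP inside the handle would have nonzero class in the $H_1(H(v_1))$ summand while its partner's class lies in the complementary summand, contradicting that the two curves are homologous). The trade-off is that your version is shorter and more uniform, at the cost of importing one-dimensionality as an external input, whereas the paper's case-by-case argument is self-contained relative to its two stated facts. One small point worth making explicit if you write this up: the reduction of non-adjacency to distinctness needs all three vertices $v_{j-1},v_j,v_{j+1}$ to be pairwise distinct before the no-$2$-simplex argument applies, which holds here because consecutive vertices alternate between $V_s(S)$ and $V_{bp}(S)$.
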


\begin{proof}
The assumption $v_6\neq v_2$ implies that $v_6$ and $v_2$ intersect.
Since $v_1$ is disjoint from $v_6$, but intersects $v_4$, the BPs $v_6$ and $v_4$ are distinct, and thus intersect.
In general, for any two distinct h-curves $x$, $y$ in $S$, there exists at most one BP in $S$ disjoint from $x$ and $y$ if it exists.
If $v_6$ and $v_3$ were disjoint, then the two BPs $v_6$ and $v_2$ would be disjoint from the two distinct h-curves $v_1$ and $v_3$.
This contradicts $v_6\neq v_2$.
It follows that $v_6$ and $v_3$ intersect.

Since $v_6$ is disjoint from $v_5$, but intersects $v_3$, the h-curves $v_5$ and $v_3$ are distinct.
The curves $v_5$ and $v_3$ intersect because they are disjoint from the BP $v_4$.
Since $v_4$ is disjoint from $v_5$, but intersects $v_1$, the h-curves $v_5$ and $v_1$ are distinct.
The curves $v_5$ and $v_1$ intersect because they are disjoint from the BP $v_6$.
If $v_5$ and $v_2$ were disjoint, then the two BPs $v_6$ and $v_2$ would be disjoint from the two distinct h-curves $v_1$ and $v_5$.
As noted in the previous paragraph, this contradicts $v_6\neq v_2$.
It follows that $v_5$ and $v_2$ intersect.
\end{proof}

\begin{proof}[Proof of Proposition \ref{prop-hex3}]
We prove the proposition in the case where $L$ consists of $a$, $b$, $c$ and $d$.
The proof of the other cases are obtained along a verbatim argument after exchanging symbols appropriately.
We show that for all but one of integers $n$, the 6-tuple $(a, b, c, d, t^n_{\alpha}(e), t_{\alpha}^n(f))$ defines a hexagon in $\calt(S)$.
For any integer $n$, the pair $\{ t^n_{\alpha}(e), t_{\alpha}^n(f)\}$ is an edge of $\calt(S)$.
Since $\alpha$ is disjoint from $a$ and $d$, we have $t_{\alpha}^n(a)=a$ and $t_{\alpha}^n(d)=d$.
Each of $\{ d, t^n_{\alpha}(e)\}$ and $\{ t^n_{\alpha}(f), a\}$ is thus an edge of $\calt(S)$.
To prove the proposition, it suffices to show the following three assertions:
\begin{enumerate}
\item[(1)] At most one integer $n$ satisfies the equality $t_{\alpha}^n(f)=b$.
\item[(2)] For any integer $n$ with $t_{\alpha}^n(f)\neq b$, the 6-tuple $(a, b, c, d, t^n_{\alpha}(e), t_{\alpha}^n(f))$ defines a hexagon in $\calt(S)$.
\item[(3)] For any integers $n_1$, $n_2$ with $n_1\neq n_2$, we have $t_{\alpha}^{n_1}(f)\neq t_{\alpha}^{n_2}(f)$.
\end{enumerate}
Assertions (1) and (3) hold because $\alpha$ and $f$ intersect.
Applying Lemma \ref{lem-hex3} when $(v_1,\ldots, v_6)=(a, b, c, d, t_{\alpha}^n(e), t_{\alpha}^n(f))$, we obtain assertion (2).
\end{proof}

The following lemma will be used in Section \ref{sec-aut}.

\begin{lem}\label{seq_hexagon}
Let $\Theta$ be the hexagon in $\calt(S)$ drawn in Figure \ref{fig-hex3}. 
For any two vertices $u$, $v$ of $\calt(S)$, there exists a sequence of hexagons in $\calt(S)$, $\Pi_1, \Pi_2, \ldots, \Pi_n$, satisfying the following three conditions: 
\begin{itemize}
\item For any $k=1,\ldots,n$, there exists $\gamma \in \mod(S)$ with $\Pi_k=\gamma(\Theta)$.
\item We have $u \in \Pi_1$ and $v \in \Pi_n$.
\item For any $k=1,\ldots,n-1$, the intersection $\Pi_k\cap \Pi_{k+1}$ contains a 2-path. 
\end{itemize}
\end{lem}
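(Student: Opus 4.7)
The plan is to reduce the lemma to a connectivity statement about an auxiliary graph and then handle the connectivity via a generating-set argument. Since $\mod(S)$ acts transitively on $V_s(S)$ (all h-curves in $S_{2,1}$ are topologically equivalent) and on $V_{bp}(S)$ (all BPs are topologically equivalent), and $\Theta$ contains vertices of both types, every vertex of $\calt(S)$ lies in at least one $\mod(S)$-translate of $\Theta$. Accordingly, the lemma is implied by connectivity of the graph $\mathcal{H}$ whose vertex set is $\{\gamma(\Theta):\gamma\in\mod(S)\}$ and whose edges are the pairs $\{\Pi,\Pi'\}$ with $\Pi\cap\Pi'$ containing a 2-path: given a path in $\mathcal{H}$ between a translate containing $u$ and a translate containing $v$, that path yields the required sequence.

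Next, because $\mod(S)$ acts transitively on $V(\mathcal{H})$, connectivity of $\mathcal{H}$ is equivalent to the statement that $\mod(S)$ is generated by $\stab(\Theta)$ together with $T:=\{\gamma\in\mod(S):\gamma(\Theta)\cap\Theta\text{ contains a 2-path}\}$. A convenient family inside $T$ is obtained as follows: if $\zeta$ is any essential simple closed curve disjoint from three consecutive vertices of $\Theta$, then the Dehn twist $t_\zeta$ fixes those three vertices, so $t_\zeta(\Theta)\cap\Theta$ contains the corresponding 2-path, giving $t_\zeta^n\in T$ for every integer $n$. Inspection of the explicit picture of $\Theta$ in Figure \ref{fig-hex3} shows that, as the 2-path ranges over the six choices in $\Theta$, the curves disjoint from the corresponding triple collectively fill a rich portion of $S$.

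The main step is to combine these primitives into a Putman-style path construction: for a chosen finite generating set of $\mod(S)$ (for instance a Humphries-type set of Dehn twists about a small family of curves on $S_{2,1}$), exhibit for each generator $\tau$ a finite path in $\mathcal{H}$ from $\Theta$ to $\tau(\Theta)$, or equivalently, decompose $\tau$ as a product of elements of $T\cup\stab(\Theta)$. The natural order-$6$ symmetry among the vertices of $\Theta$ reduces the case analysis, since a generator that fails to fix any particular 2-path may still be analyzed against a rotated 2-path.

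The main obstacle is precisely the verification in the last step: a Dehn twist generator whose supporting curve meets $\Theta$ in a complicated way need not lie in $T$ or $\stab(\Theta)$ directly, and one must bridge it through a sequence of intermediate translates. I expect that this is handled by a careful case analysis, aided by the observation that curves in the link of a 2-path of $\Theta$—which correspond to translates of $\Theta$ adjacent to $\Theta$ in $\mathcal{H}$—already suffice to realize enough Dehn twists to recover all of $\mod(S)$ modulo the stabilizer of $\Theta$.
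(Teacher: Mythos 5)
Your reduction is sound and is essentially the standard orbit-graph reformulation (in the spirit of Putman's connectivity lemma, which the paper itself cites as an alternative route): since $\mod(S)$ acts transitively on $V_s(S)$ and on $V_{bp}(S)$ and $\Theta$ has vertices of both types, the lemma does follow once one knows that $\mod(S)$ is generated by elements $\gamma$ with $\gamma(\Theta)\cap\Theta$ containing a 2-path (note $\stab(\Theta)$ is already contained in that set, so it plays no separate role). Your sufficient criterion --- a Dehn twist about a curve disjoint from three consecutive vertices of $\Theta$ fixes that 2-path pointwise --- is also exactly the right observation.

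The gap is that you stop precisely where the actual content of the proof lies. You acknowledge that a generator of a Humphries-type set ``need not lie in $T$ or $\stab(\Theta)$ directly'' and say you expect a case analysis with bridging through intermediate translates to work, but you neither choose a concrete generating set nor carry out any verification, so nothing is proved. The paper's proof is exactly this verification, done so that no bridging is needed: it fixes the Gervais/Humphries-type generating set $T=\{t_1^{\pm1},\ldots,t_5^{\pm1}\}$ of Dehn twists about the five curves $\alpha_1,\ldots,\alpha_5$ of Figure \ref{fig-gen}, drawn in a specific position relative to the hexagon $\Theta$ of Figure \ref{fig-hex3} so that \emph{every} generator $r\in T$ satisfies that $r(\Theta)\cap\Theta$ contains a 2-path (each $\alpha_j$ misses three consecutive vertices of $\Theta$). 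Then, given $v$, transitivity on $V_s(S)$ and $V_{bp}(S)$ gives $h\in\mod(S)$ with $v\in\{h(a),h(b)\}$, and writing $h=h_1\cdots h_n$ with $h_j\in T$ the telescoping sequence $\Theta,\ h_1(\Theta),\ h_1h_2(\Theta),\ \ldots,\ h(\Theta)$ already satisfies all three conditions, with no auxiliary graph and no stabilizer analysis. So your framework is compatible with the paper's argument, but the decisive step --- exhibiting generators that all lie in your set $T$ (or, failing that, actually producing the bridging paths) --- is missing, and without it the proposal does not establish the lemma. Your appeal to an ``order-6 symmetry'' of $\Theta$ is likewise unjustified and unnecessary once the generators are chosen as in Figure \ref{fig-gen}.
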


\begin{proof}
Pick two vertices $u$, $v$ of $\calt(S)$.
For any $\gamma \in \mod(S)$, the lemma holds for $u$ and $v$ if and only if it holds for $\gamma u$ and $\gamma v$.
To prove the lemma, we may therefore assume that $u$ is a vertex of $\Theta$.
For $j=1,\ldots, 5$, let $t_j$ denote the Dehn twist about the curve $\alpha_j$ drawn in Figure \ref{fig-gen}.
%====================================
\begin{figure}
\begin{center}
\includegraphics[width=3cm]{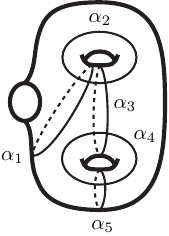}
\caption{}\label{fig-gen}
\end{center}
\end{figure}
%====================================
We set $T=\{t_1^{\pm 1}, \ldots, t_5^{\pm 1}\}$.
The group $\mod(S)$ is known to be generated by elements of $T$ (see \cite{gervais}). 
Since $\mod(S)$ transitively acts on $V_s(S)$ and on $V_{bp}(S)$, we can find an element $h$ of $\mod(S)$ with $v\in \{ h(a), h(b)\}$, where $a$ and $b$ are the h-vertex and the BP-vertex of $\Theta$, respectively, drawn in Figure \ref{fig-hex3}.
Write $h$ as a product $h=h_1\cdots h_n$ so that $h_j\in T$ for any $j$.  
For any $r\in T$, the intersection $r(\Theta)\cap \Theta$ contains a 2-path.
The sequence of hexagons in $\calt(S)$,
\[\Theta,\ h_1(\Theta),\ h_1h_2(\Theta),\ \ldots,\ h_1\cdots h_n(\Theta)=h(\Theta),\]
is thus a desired one.  
\end{proof}

%%%%%%%%%%%%%%%%%%%%%%%%%%%%%%%%%%%%%%%%%%%%

\section{Construction of an automorphism of the complex of curves}\label{sec-aut}

Throughout this section, we set $S=S_{2,1}$.
For any superinjective map $\phi$ from $\calt(S)$ into itself, we construct an automorphism of $\calc(S)$ inducing $\phi$.

\subsection{Surjectivity of a superinjective map}

In this subsection, we show that any superinjective map from $\calt(S)$ into itself preserves h-vertices and BP-vertices, respectively, and is surjective.

\begin{lem}\label{invariant_2_kind}
Let $\Theta$ be the hexagon in $\calt(S)$ drawn in Figure \ref{fig-hex3}. 
Then for any superinjective map $\phi \colon \calt(S) \rightarrow \calt(S)$ and any $\gamma \in \mod(S)$, the hexagon $\phi(\gamma(\Theta))$ in $\calt(S)$ is of type 3.  
\end{lem}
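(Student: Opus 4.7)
The plan is to combine the finiteness results from Theorem \ref{thm-hex1-hex2} with the abundance statement of Proposition \ref{prop-hex3} to force the image to be of type 3. First, I observe that a superinjective map $\phi$ sends hexagons in $\calt(S)$ to hexagons: it is injective on vertices, it preserves adjacency as a simplicial map, and it preserves non-adjacency (positive intersection is preserved, by the definition of superinjectivity). Since $\gamma$ acts as a simplicial automorphism of $\calt(S)$, $\gamma(\Theta)$ is a hexagon of type 3, and $\phi(\gamma(\Theta))$ is again a hexagon. By Lemmas \ref{6-h_hex} and \ref{one-BP_hex}, every hexagon in $\calt(S)$ has at least two BP-vertices; since two BP-vertices in $\calt(S)$ are never adjacent, a hexagon has at most three BP-vertices, and the cyclic position of the BP-vertices forces any hexagon with exactly two BP-vertices to be of type 1 or type 2. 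Hence $\phi(\gamma(\Theta))$ is of type 1, type 2, or type 3, and it suffices to rule out the first two possibilities.

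Suppose for contradiction that $\phi(\gamma(\Theta))$ is of type 1, and choose a 3-path $K$ in $\phi(\gamma(\Theta))$ containing both of its BP-vertices (such a $K$ exists by inspecting the cyclic structure of a type 1 hexagon). Since $\phi$ is injective on the vertex set of $\gamma(\Theta)$ and preserves both adjacency and non-adjacency, the four preimages in $\gamma(\Theta)$ of the vertices of $K$ form a 3-path $L$ in $\gamma(\Theta)$. By Proposition \ref{prop-hex3}, applied to the type 3 hexagon $\gamma(\Theta)$ (which is legitimate because $\gamma \in \mod(S)$ acts as a simplicial automorphism of $\calt(S)$, carrying the conclusion of the proposition from $\Theta$ to $\gamma(\Theta)$), there exist infinitely many hexagons $\Pi_1, \Pi_2, \ldots$ in $\calt(S)$ containing $L$. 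Each $\phi(\Pi_j)$ is then a hexagon containing $K$, and since distinct hexagons have distinct vertex sets and $\phi$ is injective on vertices, the $\phi(\Pi_j)$ are pairwise distinct. This contradicts Theorem \ref{thm-hex1-hex2}(i), which asserts that $\phi(\gamma(\Theta))$ is the unique hexagon containing $K$.

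An entirely analogous argument rules out type 2: one chooses a 3-path $K$ in $\phi(\gamma(\Theta))$ containing exactly one of its two BP-vertices, pulls it back to a 3-path $L$ in $\gamma(\Theta)$ (again, the preimage is a 3-path because $\phi$ preserves adjacency and non-adjacency and is injective on vertices), applies Proposition \ref{prop-hex3} to produce infinitely many hexagons containing $L$, and then observes that their pairwise distinct images under $\phi$ are infinitely many hexagons containing $K$, contradicting the bound of at most two hexagons distinct from $\phi(\gamma(\Theta))$ in Theorem \ref{thm-hex1-hex2}(ii). It follows that $\phi(\gamma(\Theta))$ is of type 3. The only point requiring real care is the setup, namely verifying that $\phi$ indeed sends hexagons to hexagons and does so injectively on the set of hexagons; once this is established, the contradiction is immediate from the contrast between the infinitude provided by Proposition \ref{prop-hex3} and the uniqueness/boundedness provided by Theorem \ref{thm-hex1-hex2}.
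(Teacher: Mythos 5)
Your proof is correct and follows essentially the same route as the paper: transfer the "infinitely many hexagons through every 3-path" property of Proposition \ref{prop-hex3} from $\gamma(\Theta)$ to $\phi(\gamma(\Theta))$ via injectivity of $\phi$, and contrast this with the finiteness bounds of Theorem \ref{thm-hex1-hex2} to exclude types 1 and 2. The paper's proof is just a terser version of the same argument; your added verifications (that $\phi$ carries hexagons to hexagons and that suitable 3-paths with the required BP-vertex configurations exist) are correct and fill in details the paper leaves implicit.
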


\begin{proof}
Pick $\gamma \in \mod(S)$.
The same property as that in Proposition \ref{prop-hex3} is satisfied by the hexagon $\gamma(\Theta)$, and hence by the image $\phi(\gamma(\Theta))$ because $\phi$ is superinjective.
By Theorem \ref{thm-hex1-hex2}, the hexagon $\phi(\gamma(\Theta))$ is of neither type 1 nor type 2, and is thus of type 3.
\end{proof}

\begin{lem}\label{lem-phi-pre}
Any superinjective map from $\calt(S)$ into itself preserves h-vertices and BP-vertices of $\calt(S)$, respectively.   
\end{lem}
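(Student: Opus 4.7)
The plan is to combine Lemma~\ref{invariant_2_kind}, which pins down the type of each $\phi(\gamma(\Theta))$, with the connectivity provided by Lemma~\ref{seq_hexagon}, and then to rule out the ``swapping'' alternative by means of type~1 hexagons. Throughout I will use the general observation that a superinjective $\phi$ sends hexagons to hexagons: the adjacencies in a hexagon are preserved by simpliciality, the required non-adjacencies are preserved by superinjectivity, and injectivity of $\phi$ (as recalled in the introduction) makes the six images distinct.

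First, for each $\gamma \in \mod(S)$, Lemma~\ref{invariant_2_kind} tells us that $\phi(\gamma(\Theta))$ is of type~3, so both $\gamma(\Theta)$ and $\phi(\gamma(\Theta))$ have vertices alternating between h-vertices and BP-vertices of $\calt(S)$. Restricted to $\gamma(\Theta)$, the map $\phi$ therefore either preserves the partition into h-vertices and BP-vertices (call this case~A on $\gamma(\Theta)$) or swaps these two classes (case~B).

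Next, I would establish a propagation step: if $\gamma(\Theta)$ and $\gamma'(\Theta)$ share a 2-path $u$-$v$-$w$, then they fall into the same case. In each hexagon the endpoints $u$ and $w$ sit at distance $2$ in the $6$-cycle and so belong to the same alternating class, while $v$ lies in the opposite class; because the h/BP type of each of $u$, $v$, $w$ as a vertex of $\calt(S)$ is intrinsic, knowing whether $\phi$ preserves or swaps classes on one of the two hexagons forces the same conclusion on the other. Invoking the sequence of hexagons supplied by Lemma~\ref{seq_hexagon}, this propagates and shows that either case~A holds on every $\gamma(\Theta)$ simultaneously, or case~B holds on every $\gamma(\Theta)$ simultaneously.

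Finally, I would rule out the uniform case~B. Lemma~\ref{seq_hexagon} implies that every vertex of $\calt(S)$ lies in some $\gamma(\Theta)$, so case~B uniformly would mean $\phi$ globally exchanges h-vertices with BP-vertices. But the hexagons of type~1 constructed in Section~\ref{sec-type1} have four h-vertices and only two BP-vertices; their images under $\phi$ would therefore be hexagons with four BP-vertices, contradicting the bound of three BP-vertices per hexagon noted at the start of Section~\ref{sec-hex} (which itself reflects that BP-vertices are pairwise non-adjacent in $\calt(S)$ and that a $6$-cycle admits no independent set of size~$4$). Hence case~A holds uniformly, and the lemma follows. The step I expect to demand the most care is the propagation argument, where one must verify that for every possible 2-path shared between two type~3 hexagons, its three vertices really do occupy corresponding alternating classes in both hexagons, so that the ``preserve versus swap'' dichotomy transfers cleanly from one hexagon to the next.
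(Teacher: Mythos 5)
Your proposal is correct and follows essentially the same route as the paper: it uses Lemma \ref{invariant_2_kind} to see that each $\phi(\gamma(\Theta))$ is of type 3 (hence alternates h/BP), propagates the preserve-versus-swap dichotomy along the chains of hexagons from Lemma \ref{seq_hexagon}, and kills the swap case because a type-1 hexagon would map to a hexagon with four BP-vertices, impossible since no edge of $\calt(S)$ joins two BP-vertices. This matches the paper's proof step for step, differing only in presentation.
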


\begin{proof}
Let $\phi \colon \calt(S)\rightarrow \calt(S)$ be a superinjective map.
Assuming that there exists an h-vertex $u$ of $\calt(S)$ with $\phi(u)$ a BP-vertex, we deduce a contradiction.
Pick an h-vertex $v$ of $\calt(S)$.
By Lemma \ref{seq_hexagon}, there exists a sequence of hexagons in $\calt(S)$, $\Pi_1, \Pi_2,\ldots , \Pi_n$, such that
\begin{itemize}
\item any $\Pi_k$ is of the form $\gamma(\Theta)$ for some $\gamma \in \mod(S)$;
\item we have $u \in \Pi_1$ and $v \in \Pi_n$; and
\item the intersection $\Pi_k\cap \Pi_{k+1}$ contains a 2-path for any $k=1,\ldots, n-1$.
\end{itemize} 
We note that for any $k=1,\ldots, n$, the hexagon $\phi(\Pi_k)$ is of type 3 by Lemma \ref{invariant_2_kind}, and that any edge of a hexagon in $\calt(S)$ of type 3 consists of an h-vertex and a BP-vertex.
Since $u$ is an h-vertex and $\phi(u)$ is a BP-vertex, the map $\phi$ sends h-vertices of $\Pi_1$ to BP-vertices of $\phi(\Pi_1)$, and sends BP-vertices of $\Pi_1$ to h-vertices of $\phi(\Pi_1)$.
Using the property that $\Pi_k\cap \Pi_{k+1}$ contains a 2-path for any $k=1,\ldots, n-1$, we inductively see that for any $k=1,\ldots, n$, the map $\phi$ sends h-vertices of $\Pi_k$ to BP-vertices of $\phi(\Pi_k)$, and sends BP-vertices of $\Pi_k$ to h-vertices of $\phi(\Pi_k)$. 
It turns out that $\phi(v)$ is a BP-vertex. 

We have shown that $\phi$ sends any h-vertex to a BP-vertex. 
It therefore follows that $\phi$ sends an edge consisting of two h-vertices to an edge consisting of two BP-vertices. 
This is a contradiction because $\calt(S)$ contains no edge consisting of two BP-vertices. 

We can also deduce a contradiction along a verbatim argument if we assume that there exists a BP-vertex $u$ of $\calt(S)$ with $\phi(u)$ an h-vertex. 
\end{proof}

We set $Y=S_{1,2}$. 
To prove surjectivity of a superinjective map from $\calt(S)$ into itself, we recall the following simplicial complexes associated to $Y$. 

\medskip

\noindent {\bf Complex $\cala(Y)$.} We define $\cala(Y)$ to be the abstract simplicial complex such that the set of vertices of $\cala(Y)$ is $V_a(Y)$, and a non-empty finite subset $\sigma$ of $V_a(Y)$ is a simplex of $\cala(Y)$ if and only if there exist mutually disjoint representatives of elements of $\sigma$. 

\medskip

\noindent {\bf Complex $\cald(Y)$.} We define $\cald(Y)$ to be the full subcomplex of $\cala(Y)$ spanned by all vertices that correspond to essential simple arcs in $Y$ connecting the two boundary components  of $Y$.

\begin{rem}\label{rem-d}
Let us describe simplices of $\cald(Y)$ of maximal dimension.
We denote by $Y_0$ the surface obtained from $Y$ by shrinking each component of $\partial Y$ to a point.
Let $P=\{ x_1, x_2\}$ denote the set of the two points of $Y_0$ into which components of $\partial Y$ are shrunken.
The natural map from $Y$ onto $Y_0$ induces the bijection from $V_a(Y)$ onto the set of isotopy classes of ideal arcs in the punctured surface $(Y_0, P)$.
It turns out that a simplex of $\cala(Y)$ of maximal dimension corresponds to an ideal triangulation of $(Y_0, P)$, and that a simplex of $\cald(Y)$ of maximal dimension corresponds to an ideal squaring of $(Y_0, P)$ defined as follows.
We mean by an {\it ideal squaring} of $(Y_0, P)$ a cell division $\delta$ of $Y_0$ such that
\begin{itemize}
\item the set of 0-cells of $\delta$ is $P$;
\item any 1-cell of $\delta$ is an ideal arc in $(Y_0, P)$ connecting $x_1$ and $x_2$; and
\item any 2-cell of $\delta$ is a {\it square}, that is, it is obtained by attaching a Euclidean square $\tau$ to the 1-skeleton of $\delta$, mapping each vertex of $\tau$ to a 0-cell of $\delta$, and each edge of $\tau$ to a 1-cell of $\delta$.
\end{itemize}
By argument on the Euler characteristic of $Y_0$, for any ideal squaring $\delta$ of $(Y_0, P)$, the numbers of 1-cells and 2-cells of $\delta$ are equal to 4 and 2, respectively.
\end{rem}

We will use the following:

\begin{prop}\label{prop-d-surj}
We set $Y=S_{1,2}$.
Then any injective simplicial map from $\cald(Y)$ into itself is surjective.
\end{prop}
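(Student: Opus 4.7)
My plan is to exploit the structure of maximal simplices of $\cald(Y)$ as ideal squarings of the twice-punctured torus $(Y_0, P)$, together with the transitivity of the mapping class group and a propagation argument through the flip graph.

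First, I would invoke Remark \ref{rem-d}: every maximal simplex of $\cald(Y)$ is a $3$-simplex corresponding to an ideal squaring of $(Y_0, P)$, the latter having exactly four $1$-cells and two $2$-cells by Euler characteristic. Since $\phi$ is injective and simplicial, the image of a $3$-simplex is a set of four pairwise disjoint vertices, and thus a $3$-simplex. Hence $\phi$ restricts to an injective self-map on the set of $3$-simplices of $\cald(Y)$.

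Second, I would establish the key combinatorial fact that each $2$-simplex $\tau = \{l_1, l_2, l_3\}$ of $\cald(Y)$ is contained in exactly two $3$-simplices. Cutting $(Y_0, P)$ along representatives of $l_1, l_2, l_3$ produces a cell decomposition with an extra region that is not yet a square; a direct Euler characteristic computation shows this region is a hexagon with its boundary alternating between the two punctures, and exactly two of its diagonals yield ideal arcs completing the decomposition to an ideal squaring. Consequently the flip graph $G$ (vertices $=$ ideal squarings, edges $=$ pairs sharing a $2$-face) is $4$-regular, and $\phi$ induces an injective endomorphism of $G$ preserving this adjacency.

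Third, I would fix a reference ideal squaring $\sigma_0$. Using the Mosher-type connectivity and transitivity results for ideal cell decompositions of $(Y_0, P)$ (based on the discussion preceding Lemma \ref{lem-mosher}), one shows that $\mod^*(Y)$ acts transitively on ideal squarings, and that the stabilizer of $\sigma_0$ in $\mod^*(Y)$ permutes the four vertices of $\sigma_0$ in all ways realizable by homeomorphisms of $(Y_0, P)$. After post-composing $\phi$ with a suitable element of $\mod^*(Y)$, we may assume $\phi$ fixes $\sigma_0$ vertexwise. Then for each $2$-face $\tau$ of $\sigma_0$, $\phi$ fixes $\tau$ pointwise and sends the unique other $3$-simplex $\sigma_\tau$ extending $\tau$ to itself; injectivity and the flip count force $\phi$ to fix the additional vertex of $\sigma_\tau$ as well. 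Propagating through the connected flip graph $G$, $\phi$ fixes every vertex of every $3$-simplex, and since every vertex of $\cald(Y)$ lies in some $3$-simplex, $\phi$ is the identity on vertices after the reduction, hence surjective before it.

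The hard part will be the final propagation and, in particular, justifying that the stabilizer of $\sigma_0$ in the automorphism group of $\cald(Y)$ is realized by $\mod^*(Y)$ so that we may indeed reduce to $\phi$ fixing $\sigma_0$ vertexwise. This rigidity will require a careful analysis of the symmetries of a single ideal squaring on the twice-punctured torus and a verification that each such symmetry extends to a homeomorphism of $Y_0$ preserving $P$. Once this is in place, the flip-propagation step is essentially mechanical, relying only on uniqueness of the flipped diagonal and connectedness of $G$.
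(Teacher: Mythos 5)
Your key combinatorial claim in the second step is false, and the propagation argument built on it collapses. For a $2$-simplex $\{l_1,l_2,l_3\}$ of $\cald(Y)$ the complementary region is indeed a single hexagon whose vertices alternate between the two punctures, but that hexagon has \emph{three} main diagonals joining opposite (hence differently labelled) vertices, and in general all three give essential arcs completing $\{l_1,l_2,l_3\}$ to an ideal squaring. Concretely, realize $Y_0$ as $\mathbb{R}^2/\mathbb{Z}^2$ with $x_1=(0,0)$, $x_2=(1/2,1/2)$, and let $a,b,c,d$ be the straight arcs from $x_2$ to the four corners (lift data $(1/2,1/2)$, $(-1/2,1/2)$, $(-1/2,-1/2)$, $(1/2,-1/2)$ based at $x_1$). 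Then the arcs with lift data $(1/2,3/2)$ and $(-3/2,-1/2)$ are each disjoint from $a,b,c$, not isotopic to any of $a,b,c,d$, and each of them (as well as $d$) completes $\{a,b,c\}$ to a $3$-simplex; a direct check with straight representatives (which are in minimal position) confirms this. So a $2$-simplex lies in exactly \emph{three} maximal simplices, not two: this is the usual phenomenon that flips of quadrangulations are not unique. Consequently the flip graph is not $4$-regular, and the crucial step ``$\phi$ fixes $\tau$ pointwise and sends the unique other $3$-simplex extending $\tau$ to itself, forcing $\phi$ to fix the new vertex'' fails: after normalizing $\phi$ to fix $\sigma_0=\{a,b,c,d\}$ vertexwise, $\phi$ only permutes the two remaining completions of the face $\{a,b,c\}$ and could a priori swap them; since $\phi$ is merely injective simplicial (not superinjective), there is no immediate way to separate them, so the induction does not propagate.

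There are further unproven inputs you lean on: transitivity of $\mod^*(Y)$ on ideal squarings, realizability of the vertex permutation $\phi|_{\sigma_0}$ by a homeomorphism (which you acknowledge as ``the hard part'' but do not supply), and connectedness of the squaring flip graph; none of these follows from the material surrounding Lemma \ref{lem-mosher}, which concerns ideal triangulations only. Note also that the paper does not prove this proposition internally at all: it simply derives it from Proposition 3.1 and Lemma 3.2 of \cite{kida-cohop}, so if you want a self-contained argument you would need either to repair the flip count (working with three completions per face and finding additional data that pins down each of them), or to follow the cited route.
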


This proposition follows from \cite[Proposition 3.1 and Lemma 3.2]{kida-cohop}.
For a vertex $v$ of $\calt(S)$, we denote by $\lk(v)$ the link of $v$ in $\calt(S)$.

\begin{lem}\label{surj_lkb}
Let $b$ be a BP-vertex of $\calt(S)$ and $\phi \colon \calt(S)\rightarrow \calt(S)$ a superinjective map. 
Then the equality $\phi(\lk(b))=\lk(\phi(b))$ holds. 
\end{lem}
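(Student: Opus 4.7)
The plan is to reduce the non-trivial inclusion $\lk(\phi(b))\subseteq\phi(\lk(b))$ to Proposition \ref{prop-d-surj} by transporting $\phi|_{\lk(b)}$ to an injective simplicial self-map of $\cald(Y)$, where $Y\cong S_{1,2}$ is the non-pair-of-pants component of $S\setminus b$. The easy inclusion $\phi(\lk(b))\subseteq\lk(\phi(b))$ follows from simpliciality of $\phi$: for any $v\in\lk(b)$, $\{v,b\}$ is a $1$-simplex of $\calt(S)$, hence so is $\{\phi(v),\phi(b)\}$, placing $\phi(v)$ in $\lk(\phi(b))$.

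To set up the identification, I would first describe $\lk(b)$ explicitly. Cutting $S$ along $b$ produces a pair of pants $P_0$ containing $\partial S$ together with a genus-one surface $Y\cong S_{1,2}$; every vertex of $\lk(b)$ must involve only curves lying in $Y$ or in $b$ itself. A direct case analysis eliminates all candidate BP-vertices of $\lk(b)$: for a candidate $\{a_1,a_2\}$ with both curves essential in $Y$, the separating property of $S$ fails in each of the three possible partitions of $Y\setminus a_1\cong S_{0,4}$ by $a_2$; and for a candidate $\{c_i,a\}$ with $a$ an essential curve in $Y$, the cut $S\setminus\{c_i,a\}$ is connected whenever $a$ is non-separating in $Y$, while the case $a$ separating in $Y$ is ruled out because $a$ is then separating in $S$ and cannot belong to a BP. Hence every vertex of $\lk(b)$ is an h-vertex, necessarily represented by a p-curve in $Y$ (since any essential separating curve in $S_{1,2}$ is such a p-curve). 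These p-curves correspond bijectively, via the unique dual arc inside the cut-off pair of pants, to vertices of $\cald(Y)$; applying the same procedure to $\phi(b)$ and some $Y'\cong S_{1,2}$ presents $\phi|_{\lk(b)}$ as a vertex map $\widetilde\phi\colon V(\cald(Y))\to V(\cald(Y'))$.

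The core step is upgrading $\widetilde\phi$ to a simplicial map, so that Proposition \ref{prop-d-surj} becomes applicable once $\cald(Y')$ is identified with $\cald(Y)$ via a homeomorphism of $S$ carrying $\phi(b)$ to $b$. The obstacle is that $\lk(b)$ is itself discrete: no two disjoint non-isotopic p-curves can coexist in $S_{1,2}$ by a standard nesting argument, so the adjacency information of $\cald(Y)$ is not recorded within $\lk(b)$ directly. I would therefore isolate a combinatorial characterization of arc-disjointness that is detectable in $\calt(S)$ using vertices \emph{outside} $\lk(b)$---for instance, showing that $\ell_1,\ell_2\in\cald(Y)$ are disjoint precisely when the corresponding p-curves $\alpha_1,\alpha_2$ together with $b$ fit into a specific hexagon configuration, drawing on the type-3 hexagon analysis of Section \ref{sec-type3} together with Lemma \ref{invariant_2_kind}. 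Superinjectivity of $\phi$ propagates this configuration to $\widetilde\phi(\ell_1)$ and $\widetilde\phi(\ell_2)$, yielding the simplicial property, while injectivity of $\widetilde\phi$ is inherited from $\phi$. Proposition \ref{prop-d-surj} then gives surjectivity of $\widetilde\phi$, which unwinds through the bijection to $\lk(\phi(b))\subseteq\phi(\lk(b))$ and completes the proof. The delicate part is precisely this combinatorial characterization of arc-disjointness, coaxed from the ambient structure of $\calt(S)$ rather than read off $\lk(b)$.
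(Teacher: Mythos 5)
Your overall architecture is the same as the paper's: identify $\lk(b)$ with the vertex set of $\cald(Y)$ via dual arcs, transport $\phi$ to an injective simplicial self-map of $\cald(Y)$ (after normalizing $\phi(b)=b$), and invoke Proposition \ref{prop-d-surj}. Your description of $\lk(b)$ (only h-vertices, all of them p-curves of $Y$, no two of them disjoint) and the easy inclusion are correct; note that identifying $\lk(\phi(b))$ with arcs in some $Y'$ already presupposes that $\phi(b)$ is a BP-vertex, i.e.\ Lemma \ref{lem-phi-pre}. The genuine gap is precisely the step you yourself flag as delicate: you never establish a combinatorial characterization of arc-disjointness that is visibly preserved by $\phi$, and the tools you point to would not supply it. Lemma \ref{invariant_2_kind} only says that the $\phi$-images of $\mod(S)$-translates of the single hexagon $\Theta$ are of type 3; it tells you neither which triples $(b,\alpha_1,\alpha_2)$ extend to a type-3 hexagon, nor that such an extension is equivalent to disjointness of the dual arcs $l_{\alpha_1}$, $l_{\alpha_2}$ --- and your plan needs this equivalence in both directions (the configuration must exist whenever the arcs are disjoint, and existence of the configuration for the image triple must force disjointness of the image arcs). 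Without it, $\widetilde\phi$ is only a vertex map, so Proposition \ref{prop-d-surj} cannot be applied and the hard inclusion is not proved.

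For comparison, the paper closes exactly this gap with type-1 hexagons: for distinct $\alpha, \beta \in V_s(Y)$, the arcs $l_\alpha$ and $l_\beta$ are disjoint if and only if there is a hexagon of type 1 in $\calt(S)$ containing $b$, $\alpha$ and $\beta$; this equivalence rests on the classification of type-1 hexagons in Section \ref{sec-type1} (every such hexagon is the $\lambda_{\alpha_0}$-image of a hexagon in $\calc_s(S_{\alpha_0})$, together with Theorem \ref{thm-13}). The $\phi$-invariance of the criterion is then immediate from Lemma \ref{lem-phi-pre}: $\phi$ preserves h- and BP-vertices and sends hexagons to hexagons, hence preserves hexagons of type 1. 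If you wish to salvage a type-3 variant, you would still have to prove an analogous ``disjoint arcs $\Leftrightarrow$ hexagon configuration'' statement from scratch, and the preservation statement you would invoke is again Lemma \ref{lem-phi-pre}, not Lemma \ref{invariant_2_kind}.
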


\begin{proof}
We may assume $\phi(b)=b$.
Let $Y$ denote the component of $S_b$ homeomorphic to $S_{1, 2}$.
As noted right after Lemma \ref{lem-13}, there is a one-to-one correspondence between elements of $V_s(Y)$ and elements of $V_a(Y)$ whose representatives connect the two components of $\partial Y$.
For $\alpha \in V_s(Y)$, we denote by $l_{\alpha}$ the element of $V_a(Y)$ corresponding to $\alpha$.
By Theorem \ref{thm-type1} (ii), for any two distinct vertices $\alpha, \beta \in V_s(Y)$, there is a hexagon in $\calt(S)$ of type 1 containing $b$, $\alpha$ and $\beta$ if and only if $l_{\alpha}$ and $l_{\beta}$ are disjoint (see Figure \ref{fig-link} (a) for such two disjoint arcs).
%====================================
\begin{figure}
\begin{center}
\includegraphics[width=11cm]{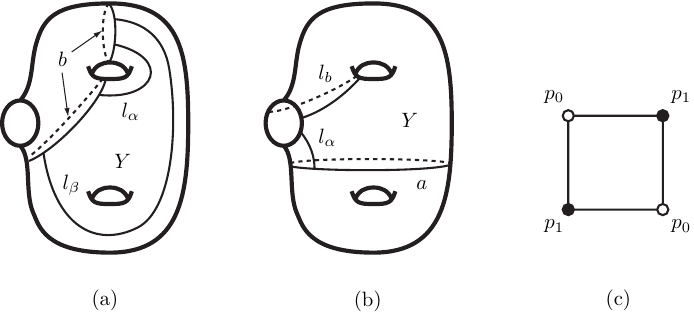}
\caption{}\label{fig-link}
\end{center}
\end{figure}
%====================================
The map $\phi$ induces an injective simplicial map from $\cald(Y)$ into itself because $\phi$ preserves hexagons in $\calt(S)$ of type 1 by Lemma \ref{lem-phi-pre}.
Proposition \ref{prop-d-surj} implies the equality in the lemma.
\end{proof}

\begin{lem}\label{surj_lkd}
Let $a$ be an h-vertex of $\calt(S)$ and $\phi \colon \calt(S)\rightarrow \calt(S)$ a superinjective map. 
Then the equality $\phi(\lk(a))=\lk(\phi(a))$ holds. 
\end{lem}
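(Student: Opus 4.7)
My plan is to parallel the argument of Lemma~\ref{surj_lkb}, but I must treat both types of vertex of $\lk(a)$. First, by Lemma~\ref{lem-phi-pre} I may assume $\phi(a)=a$. Let $Y\cong S_{1,2}$ be the component of $S_a$ containing $\partial S$ and $Q\cong S_{1,1}$ the handle component. A short topological analysis shows every vertex of $\lk(a)$ lies inside $Y$: $Q$ has no essential separating curve, no pair of disjoint non-isotopic essential curves, and any BP of $S$ one of whose curves lies in $Q$ fails to disconnect $S$. Consequently, the h-vertices of $\lk(a)$ are exactly the p-curves of $Y$, and the BP-vertices of $\lk(a)$ are exactly the bounding pairs of $Y$ which split $Y$ into two pairs of pants, one containing each boundary component (the only BP type present in $S_{1,2}$).

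For the h-vertex part I would argue essentially as in Lemma~\ref{surj_lkb}. The p-curves of $Y$ are in bijection with arcs in $\cald(Y)$ via $\alpha\mapsto l_\alpha$, and two distinct p-curves $\alpha,\beta\in\lk(a)$ have disjoint arcs $l_\alpha, l_\beta$ if and only if some hexagon of $\calt(S)$ of type 1 contains $a,\alpha,\beta$. Since $\phi$ preserves vertex types by Lemma~\ref{lem-phi-pre} and is superinjective, it carries hexagons of type 1 to hexagons of type 1, so it induces an injective simplicial self-map of $\cald(Y)$, which is surjective by Proposition~\ref{prop-d-surj}. This handles the h-vertices.

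For the BP-vertices I would appeal to Mosher's theory of ideal triangulations (Lemma~\ref{lem-mosher}) on the twice-punctured torus $(Y_0,P)$ obtained by collapsing each boundary of $Y$ to a point. Each type-(b) BP of $Y$ arises as the pair of curves in the boundary of the pants neighborhood of an essential simple arc of $Y$ both of whose endpoints lie on $\partial S$, equivalently of an ideal loop of $(Y_0,P)$ at the puncture coming from $\partial S$. The plan is to extend the self-map of $\cald(Y)$ produced above to an injective simplicial self-map of the full ideal arc complex $\cala(Y)$ of $(Y_0,P)$ by sending each BP-vertex of $\lk(a)$ to its corresponding loop-arc; Lemma~\ref{lem-mosher} identifies maximal simplices of $\cala(Y)$ with ideal triangulations of $(Y_0,P)$, and simpliciality on loop-arcs can be read off of configurations of $\calt(S)$ preserved by $\phi$ (hexagons of types 1 and 2 and the pentagons of Lemma~\ref{lem-hex2-pen}). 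A surjectivity theorem for injective simplicial self-maps of $\cala(Y)$, analogous to Proposition~\ref{prop-d-surj} and derivable from the same machinery of \cite{kida-cohop}, then forces $\phi$ to be surjective on BP-vertices of $\lk(a)$, giving $\phi(\lk(a))=\lk(\phi(a))$. The main obstacle I expect is verifying simpliciality of the extension: within $\lk(a)$ the BP-vertices are mutually non-adjacent (any two intersect as sets of curves on $S$), so arc-disjointness among loop-arcs must be certified using vertices of $\calt(S)$ lying outside $\lk(a)$.
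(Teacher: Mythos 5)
Your first half (the h-vertices of $\lk(a)$) is exactly the paper's argument: reduce to $\phi(a)=a$, identify p-curves of $Y\cong S_{1,2}$ with arcs of $\cald(Y)$, detect arc-disjointness by type 1 hexagons through $a$, and invoke Proposition \ref{prop-d-surj}. The BP-vertex half, however, has a genuine gap, and it is the one you yourself flag: to extend $\tilde{\phi}$ to an injective \emph{simplicial} self-map of $\cala(Y)$ (or even of the subcomplex spanned by connecting arcs and loops at $p_0$) you must show that if $b,b'$ are BP-vertices of $\lk(a)$ whose loop-arcs $l_b,l_{b'}$ are disjoint, then $l_{\phi(b)},l_{\phi(b')}$ are disjoint. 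No adjacency in $\calt(S)$ records this relation (any two BPs of $Y$ intersect as curve systems, since $S_{1,2}$ carries at most two disjoint non-isotopic curves), so it would have to be encoded by some configuration of hexagons or pentagons preserved by $\phi$; you gesture at "types 1 and 2 and Lemma \ref{lem-hex2-pen}" but give no such criterion, and none is obvious. On top of that, your plan needs a surjectivity theorem for injective simplicial self-maps of $\cala(Y)$, which you assert is "derivable from the same machinery" but do not prove; Proposition \ref{prop-d-surj} as stated covers only $\cald(Y)$.

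The paper's proof is arranged precisely so that no disjointness relation between two loop-arcs is ever needed. It only uses the relation between a loop-arc $l_b$ and the connecting arcs $l_\alpha$ (certified by type 1 hexagons containing $a$, $b$, $\alpha$), plus Mosher's Lemma \ref{lem-mosher}: a maximal simplex $\sigma$ of $\cald(Y)$ is an ideal squaring of $(Y_0,P)$, which admits \emph{exactly two} ideal arcs at $p_0$ disjoint from it, the set $L(\sigma)$. Since $\phi$ preserves type 1 hexagons through $a$ and is injective, it maps $L(\sigma)$ bijectively onto the two-element set $L(\tilde{\phi}(\sigma))$; combining this with surjectivity of $\tilde{\phi}$ on $\cald(Y)$ and the fact that every loop-arc at $p_0$ lies in $L(\sigma)$ for some maximal $\sigma$ yields surjectivity on $V_{bp}(Y)$. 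If you want to salvage your route you would need to supply the missing disjointness criterion for pairs of loop-arcs and a proof (or precise citation) of the surjectivity statement for $\cala(Y)$; as written, the argument is incomplete at exactly the point where the real work lies.
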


\begin{proof}
We may assume $\phi(a)=a$.
Let $Y$ denote the component of $S_a$ homeomorphic to $S_{1, 2}$.
Note that $V_s(Y)$ and $V_{bp}(Y)$ are naturally identified with sets of vertices of $\lk(a)$.
An argument similar to the proof of Lemma \ref{surj_lkb} shows that $\phi$ induces an injective simplicial map $\tilde{\phi}\colon \cald(Y)\rightarrow \cald(Y)$, which is surjective by Proposition \ref{prop-d-surj}.
It follows that $\phi$ sends $V_s(Y)$ onto itself.

We prove that $\phi$ sends $V_{bp}(Y)$ onto itself.
As noted in the second paragraph of Section \ref{sec-type3}, there is a one-to-one correspondence between elements of $V_{bp}(Y)$ and elements of $V_a(Y)$ whose representatives are non-separating in $Y$ and connect two points in the component of $\partial Y$ corresponding to $\partial S$.
For $b \in V_{bp}(Y)$, we denote by $l_b$ the element of $V_a(Y)$ corresponding to $b$.
We use the same symbol as in the proof of Lemma \ref{surj_lkb}.
Namely, for $\alpha \in V_s(Y)$, we denote by $l_{\alpha}$ the element of $V_a(Y)$ corresponding to $\alpha$.
By Theorem \ref{thm-type1} (ii), for any $b\in V_{bp}(Y)$ and $\alpha \in V_s(Y)$, there exists a hexagon in $\calt(S)$ of type 1 containing $a$, $b$ and $\alpha$ if and only if $l_b$ and $l_{\alpha}$ are disjoint (see Figure \ref{fig-link} (b) for such two disjoint arcs).

Pick a simplex $\sigma$ of $\cald(Y)$ of maximal dimension.
Let $Y_0$ denote the surface obtained from $Y$ by shrinking each component of $\partial Y$ to a point.
Let $P$ denote the set of points of $Y_0$ into which components of $\partial Y$ are shrunken.
We then obtain the punctured surface $(Y_0, P)$.
Let $p_0$ denote the point of $P$ into which $\partial S$ is shrunken.
Let $p_1$ denote the other point of $P$.
As discussed in Remark \ref{rem-d}, we have the ideal squaring of $(Y_0, P)$ corresponding to $\sigma$, and the set of whose 2-cells consists of two squares.
In each of those two squares, as in Figure \ref{fig-link} (c), two opposite vertices correspond to $p_0$, the other two vertices correspond to $p_1$, and we have an arc connecting the two vertices corresponding to $p_0$ and dividing the square into two triangles.
It follows from Lemma \ref{lem-mosher} that up to isotopy, there exist exactly two ideal arcs in $(Y_0, P)$ disjoint from any ideal arc corresponding to an element of $\sigma$ and both of whose end points are $p_0$. 
We define $L(\sigma)$ as the subset of $V_a(Y)$ consisting of the two elements that correspond to those ideal arcs.
Any arc in $L(\sigma)$ is non-separating in $Y$ because for any essential simple arc $l$ in $Y$ that is separating in $Y$, a vertex of $\cald(Y)$ whose representative is disjoint from $l$ uniquely exists and because we have $|\sigma|=4$.

The claim in the end of the second paragraph of the proof and injectivity of $\phi$ imply that for any simplex $\sigma$ of $\cald(Y)$ of maximal dimension, the map $\phi$ induces a bijection from $L(\sigma)$ onto $L(\tilde{\phi}(\sigma))$.
For any $b\in V_{bp}(Y)$, there exists a simplex of $\cald(Y)$ of maximal dimension any of whose arcs is disjoint from $l_b$.  
Surjectivity of the map $\tilde{\phi}\colon \cald(Y)\rightarrow \cald(Y)$ therefore implies that $\phi$ sends $V_{bp}(Y)$ onto itself.
\end{proof}

The last two lemmas and connectivity of $\calt(S)$ imply the following: 

\begin{thm}\label{surjective}
Any superinjective map from $\calt(S)$ into itself is surjective and is thus an automorphism of $\calt(S)$. 
\end{thm}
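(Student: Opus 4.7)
The plan is to exploit connectivity of $\calt(S)$ together with the local surjectivity results already established in Lemmas \ref{surj_lkb} and \ref{surj_lkd}, by a straightforward path-chasing argument. Let $\phi\colon \calt(S)\to \calt(S)$ be a superinjective map. To prove surjectivity on vertices, fix an arbitrary vertex $u$ of $\calt(S)$ and pick any target vertex $v$. By connectivity of $\calt(S)$ (which is guaranteed by Lemma \ref{seq_hexagon}, as indicated in the introduction), there is a simple edge-path
\[
\phi(u)=w_0,\ w_1,\ \ldots,\ w_n=v
\]
in $\calt(S)$ with $w_i$ adjacent to $w_{i+1}$ for each $i$.

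I will show by induction on $i$ that each $w_i$ lies in the image of $\phi$. The base case is immediate. For the inductive step, suppose $w_i=\phi(u_i)$ for some vertex $u_i$. By Lemma \ref{lem-phi-pre}, $\phi$ preserves the types of vertices, so $u_i$ is an h-vertex (resp.\ BP-vertex) exactly when $w_i$ is. Applying Lemma \ref{surj_lkd} in the first case and Lemma \ref{surj_lkb} in the second, we obtain
\[
\lk(w_i)=\lk(\phi(u_i))=\phi(\lk(u_i)).
\]
Since $w_{i+1}\in \lk(w_i)$, there exists $u_{i+1}\in \lk(u_i)$ with $\phi(u_{i+1})=w_{i+1}$. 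Iterating, we reach $v=w_n\in \phi(V(\calt(S)))$, so $\phi$ is surjective on vertices.

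Because $\phi$ is already known to be injective (any superinjective map on $\calt(S)$ is injective, as noted in the introduction following \cite{kida-tor}), it is now a bijection on vertices. It remains to check that $\phi$ induces a bijection on edges; since $\calt(S)$ is one-dimensional this suffices to make $\phi$ an automorphism. So let $\{u,w\}$ be an edge of $\calt(S)$, and write $u=\phi(u')$, $w=\phi(w')$ using the vertex bijection. If $u'$ and $w'$ were not adjacent, that is if $i(u',w')\neq 0$, then superinjectivity would give $i(\phi(u'),\phi(w'))=i(u,w)\neq 0$, contradicting the fact that $\{u,w\}$ is an edge. Hence $\{u',w'\}$ is an edge mapped to $\{u,w\}$, and the inverse vertex map is automatically simplicial, so $\phi$ is an automorphism of $\calt(S)$.

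The main obstacle has really already been overcome in Lemmas \ref{surj_lkb} and \ref{surj_lkd}, which pass local surjectivity on links through the auxiliary complex $\cald(S_{1,2})$; the present step only bootstraps from local to global using the global input of connectivity. The one delicate point is the type-preservation used to select the correct link lemma at each step of the induction, but this is precisely Lemma \ref{lem-phi-pre}.
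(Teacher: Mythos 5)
Your proof is correct and follows essentially the same route as the paper, which derives Theorem \ref{surjective} directly from Lemmas \ref{surj_lkb} and \ref{surj_lkd} together with connectivity of $\calt(S)$; you have simply written out the path-chasing induction and the edge-bijectivity check that the paper leaves implicit.
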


%%%%%%%%%%%%%%%%%%%%%%%%%%%%%%%%%%%%%%%

\subsection{Construction of a map from $V(S)$ into itself}

Let $\phi$ be an automorphism of $\calt(S)$.
We define a map $\Phi \colon V(S) \rightarrow V(S)$ as follows. 
Pick an element $\alpha$ of $V(S)$.
If $\alpha$ is separating in $S$, then we set $\Phi(\alpha)=\phi(\alpha)$. 
If $\alpha$ is non-separating in $S$, then pick a hexagon $\Pi$ in $\calt(S)$ of type 1 such that $\alpha$ is contained in the two BP-vertices of $\Pi$, and define $\Phi(\alpha)$ to be the non-separating curve in $S$ contained in the two BP-vertices of the hexagon $\phi(\Pi)$ of type 1. 

We will prove that $\Phi$ is well-defined as a consequence of Lemma \ref{well-def}.
To prove it, let us introduce the following:

\medskip

\noindent {\bf Graph $\cale$.} We define $\cale$ to be the simplicial graph so that the set of vertices of $\cale$ is $V_{bp}(S)$, and two distinct vertices $u$, $v$ of $\cale$ are connected by an edge of $\cale$ if and only if there exists a hexagon in $\calt(S)$ of type 1 containing $u$ and $v$.

\medskip

We mean by a {\it square} in $\cale$ the full subgraph of $\cale$ spanned by exactly four vertices $v_1,\ldots , v_4$ such that for any $k$ mod $4$, $v_k$ and $v_{k+1}$ are adjacent, and $v_k$ and $v_{k+2}$ are not adjacent. 
In this case, let us say that the square is defined by the 4-tuple $(v_{1},\ldots, v_{4})$.

\begin{lem}\label{square}
Let $(v_1,\ldots,v_4)$ be a 4-tuple defining a square in $\cale$. 
Then there exists a non-separating curve $\alpha$ in $S$ with $\alpha \in v_k$ for any $k=1,\ldots,4$. 
\end{lem}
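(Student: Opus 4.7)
The plan is to pull all four BPs back to a single fibre of the simplicial map $\pi\colon \calc(S)\to \calc(\bar S)$ from Theorem~\ref{thm-tree} and finish with tree combinatorics.

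For each $k$ mod $4$, since $\{v_k,v_{k+1}\}$ is an edge of $\cale$, there is a hexagon of type~$1$ in $\calt(S)$ containing $v_k$ and $v_{k+1}$ as its two BP-vertices. Every such hexagon is the image under $\lambda_\alpha$ of a hexagon in $\calc_s(S_\alpha)$ for some non-separating curve $\alpha$, and by construction both BP-vertices of the image have the form $\{\alpha,\ast\}$; hence there is a common curve $\alpha_k\in v_k\cap v_{k+1}$. Next, for any BP $\{\beta,\gamma\}\in V_{bp}(S)$, the non-separation of $\beta$ and $\gamma$ forces the two boundary copies of $\beta$ in $S_{\{\beta,\gamma\}}$ (and likewise those of $\gamma$) to lie in different components; a short Euler-characteristic count then pins the two components down as a pair of pants containing $\partial S$ and an $S_{1,2}$. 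Filling $\partial S$ in $\bar S$ converts this pair of pants into an annulus cobounded by $\beta$ and $\gamma$, so $\pi(\beta)=\pi(\gamma)$. Combined with $\pi(v_k)=\pi(\alpha_k)=\pi(v_{k+1})$ for each $k$, a single vertex $\bar\alpha\in V(\bar S)$ satisfies $\pi(v_k)=\bar\alpha$ for all $k$, so every curve appearing in any $v_k$ lies in $\pi^{-1}(\bar\alpha)$.

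By Theorem~\ref{thm-tree}, the full subcomplex $T$ of $\calc(S)$ spanned by $\pi^{-1}(\bar\alpha)$ is a tree; each $v_k$ is a $1$-simplex of $T$, and consecutive $v_k,v_{k+1}$ share the vertex $\alpha_k\in V(T)$. I split on whether $v_1$ and $v_3$ share a vertex of $T$. If they share some $\alpha$, write $v_1=\{\alpha,a\}$ and $v_3=\{\alpha,c\}$ with $a\ne c$: if $\alpha\notin v_2$ then $v_2$ must contain both $a$ and $c$, producing a triangle on $\{a,\alpha,c\}$ in $T$, impossible; hence $\alpha\in v_2$ and symmetrically $\alpha\in v_4$, giving the desired common curve. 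If $v_1$ and $v_3$ share no vertex of $T$, write $v_1=\{a_1,a_2\}$ and $v_3=\{a_3,a_4\}$ with the pairs disjoint, and $v_2=\{a_i,a_j\}$, $v_4=\{a_{i'},a_{j'}\}$ for $i,i'\in\{1,2\}$, $j,j'\in\{3,4\}$ with $(i,j)\ne(i',j')$; the case $i=i'$ or $j=j'$ produces a triangle in $T$, while $i\ne i'$ and $j\ne j'$ produces a $4$-cycle, each contradicting the tree property. The main obstacle is paragraph two's claim that every BP in $S_{2,1}$ cuts off a pair of pants containing $\partial S$---without this, the descent of $\pi$ to $V_{bp}(S)$ fails and the tree argument has no foothold.
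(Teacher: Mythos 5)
Your proof is correct and follows essentially the same route as the paper's: both collapse all curves of the four BPs into a single fibre of $\pi$ via the shared curves coming from type-1 hexagons, invoke Theorem \ref{thm-tree} to get a tree, and rule out a triangle or a simple $4$-cycle there (the paper organizes the case analysis around the shared curves $\beta_k$ of consecutive vertices rather than around $v_1\cap v_3$, but the combinatorics is the same). The ``obstacle'' you flag is not one: your Euler-characteristic count correctly shows that every BP in $S_{2,1}$ cuts off a pair of pants containing $\partial S$, so $\pi$ identifies its two curves --- a fact the paper simply asserts without proof.
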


\begin{proof}
By the definition of $\cale$, for any two adjacent vertices of $\cale$, the two BPs in $S$ corresponding to them share a non-separating curve in $S$.
For each $k$ mod $4$, let $\beta_k$ denote the non-separating curve in $S$ contained in $v_k$ and $v_{k+1}$.
Without loss of generality, it suffices to deduce a contradiction under the assumption $\beta_1\neq \beta_2$.

Let $\bar{S}$ denote the closed surface obtained from $S$ by attaching a disk to $\partial S$, and let $\pi \colon \calc(S)\rightarrow \calc(\bar{S})$ be the simplicial map associated with the inclusion of $S$ into $\bar{S}$.
Since $\pi$ sends the two curves in any BP in $S$ to the same curve in $\bar{S}$, all curves in the BPs $v_1,\ldots, v_4$ are sent to the same curve in $\bar{S}$, denoted by $\alpha_0$.
In other words, all curves in the BPs $v_1,\ldots, v_4$ are in $\pi^{-1}(\alpha_0)$.
Let $T$ denote the full subcomplex of $\calc(S)$ spanned by $\pi^{-1}(\alpha_0)$, which is a tree by Theorem \ref{thm-tree}.
The sequence, $\beta_1$, $\beta_2$, $\beta_3$, $\beta_4$, $\beta_1$, forms a closed path in $T$.

We assume $\beta_1\neq \beta_2$.
The equality $v_2=\{ \beta_1, \beta_2\}$ then holds.
We have $\beta_3\neq \beta_1$ and $\beta_4\neq \beta_2$ because $v_3\neq v_2$ and $v_1\neq v_2$.
Let $\gamma$ and $\delta$ denote the curves in $S$ with $v_1=\{ \beta_1, \gamma \}$ and $v_3=\{ \beta_2, \delta \}$.
Each of $\gamma$ and $\delta$ is equal to neither $\beta_1$ nor $\beta_2$ because $v_1\neq v_2$ and $v_3\neq v_2$.
We have either $\beta_4=\gamma$ or $\beta_3=\delta$ because otherwise we would have $v_2=v_4$.

If $\beta_4=\gamma$, then we have $\beta_3\neq \beta_2$ and $\beta_3\neq \gamma$ because otherwise the sequence, $\beta_1$, $\beta_2$, $\gamma$, $\beta_1$, would form a simple closed path in $T$.
It turns out that $\beta_1$, $\beta_2$, $\beta_3$ and $\beta_4$ are mutually distinct.
This is a contradiction.

If $\beta_3=\delta$, then we have $\beta_4\neq \beta_1$ and $\beta_4\neq \delta$ because otherwise the sequence, $\beta_1$, $\beta_2$, $\delta$, $\beta_1$, would form a simple closed path in $T$.
It turns out that $\beta_1$, $\beta_2$, $\beta_3$ and $\beta_4$ are mutually distinct.
This is also a contradiction.
\end{proof}

\begin{lem}\label{well-def}
Let $\phi$ be an automorphism of $\calt(S)$.
Let $\alpha$ be a non-separating curve in $S$.
Pick two hexagons $\Pi$, $\Omega$ in $\calt(S)$ of type 1 such that any BP-vertex of $\Pi$ and $\Omega$ contains $\alpha$.
Then the non-separating curve in $S$ contained in the two BP-vertices of $\phi(\Pi)$ is equal to that of $\phi(\Omega)$.
\end{lem}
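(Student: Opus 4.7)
The plan is to deduce the lemma from Lemma~\ref{square} via a square-connectedness argument in the graph $\cale$. First, $\phi$ preserves BP-vertices and h-vertices by Lemma~\ref{lem-phi-pre}, and hence preserves the number of BP-vertices of any hexagon; in particular $\phi$ sends type-1 hexagons to type-1 hexagons, so it induces an automorphism of $\cale$. The theorem from Section~\ref{sec-type1} then guarantees that the two BPs of $\phi(\Pi)$ share a non-separating curve $\alpha'$ and the two BPs of $\phi(\Omega)$ share a non-separating curve $\alpha''$; the task is to show $\alpha' = \alpha''$.

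Denote the BPs of $\Pi$ by $u_1, u_2$ and those of $\Omega$ by $u_3, u_4$, all four lying in $E(\alpha) := \{u \in V_{bp}(S) : \alpha \in u\}$. The key observation is that Lemma~\ref{square} provides the converse direction for squares: any square $(v_1, v_2, v_3, v_4)$ in $\cale$ has all four BPs sharing a common non-separating curve. Since $\phi$ is an automorphism of $\cale$, it preserves squares, so the image of any square in $\cale$ again has this common-curve property. Thus, if one can exhibit a finite chain of squares in $\cale$ whose vertices all lie in $E(\alpha)$, starting with a square containing the edge $\{u_1, u_2\}$ and ending with a square containing $\{u_3, u_4\}$, with each consecutive pair of squares sharing an edge of $\cale$, then the common non-separating curve attached to each square is transported unchanged along the chain, yielding $\alpha' = \alpha''$.

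The bulk of the work is therefore the construction of such square chains inside $E(\alpha)$. I would use the description of type-1 hexagons via the map $\lambda_\alpha \colon \calc_s(S_\alpha) \to \calt(S)$: an edge of $\cale$ with both endpoints in $E(\alpha)$ corresponds to a pair of p-curves appearing as the two p-vertices of a hexagon of $\calc_s(S_\alpha) \cong \calc_s(S_{1,3})$. By Theorem~\ref{thm-13}, all such hexagons lie in a single $\pmod(S_{1,3})$-orbit, and any two can be connected by a sequence of elementary moves altering one h-vertex at a time; each such move furnishes a partner hexagon, and the resulting pair of type-1 hexagons in $\calt(S)$ contributes the required fourth vertex to complete a square in $\cale$ with all four vertices in $E(\alpha)$.

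The main obstacle is verifying that each 4-tuple so constructed is a genuine square in $\cale$: one must rule out the diagonal adjacencies, so that the configuration is a 4-cycle rather than a triangle or the complete graph on four vertices. This requires a link analysis in $\calc_s(S_{1,3})$ showing that the p-curves and h-curves chosen via the partner-hexagon construction fail to lie in a common hexagon in the diagonal pattern, which is where the combinatorics of Figure~\ref{fig_hex_first} enters. A separate but parallel inspection is needed to handle degenerate overlaps such as $\{u_1, u_2\} \cap \{u_3, u_4\} \neq \varnothing$, where the chain has length zero or one and the square construction is applied once at the shared vertex.
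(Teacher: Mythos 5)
Your overall reduction coincides with the paper's: $\phi$ preserves type-1 hexagons and hence induces an automorphism of $\cale$, Lemma \ref{square} attaches a common non-separating curve to every square of $\cale$, and a chain of squares sharing edges transports that curve (for the $\phi$-images) from the edge $\{\phi(u_1),\phi(u_2)\}$ to $\{\phi(u_3),\phi(u_4)\}$. But with this skeleton in place, the entire content of the lemma is the existence of the chain --- this is exactly the paper's Claim \ref{seq_square} --- and your proposal does not actually produce it. The route you sketch does not work as stated: Theorem \ref{thm-13} gives only transitivity of $\pmod(S_{1,3})$ on hexagons of $\calc_s(S_{1,3})$, not a connecting sequence of ``elementary moves,'' and by Lemma \ref{lem-13} a hexagon in $\calc_s(S_{1,3})$ is determined by any 3-path it contains, so there is no move that alters a single h-vertex while keeping the remaining five vertices; the moves you invoke would have to be defined and justified differently. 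Moreover you explicitly leave open the verification that your 4-tuples are genuine squares (no diagonal adjacencies), which is the other half of the needed work. So what remains unproved is precisely the part of the argument that carries the weight.

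For comparison, the paper closes this gap by an explicit translation argument rather than an abstract connectivity claim: writing $a_j=\{\alpha,\alpha_j\}$ and $b_j=\{\alpha,\beta_j\}$, it fixes curves $\gamma_1,\ldots,\gamma_4$ in $S_\alpha$ with $i(\alpha_1,\gamma_1)=i(\alpha_2,\gamma_2)=0$, checks from the explicit configuration that the 4-tuple $(a_1,a_2,t_2(a_1),t_1(a_2))$ defines a square $\Delta$ in $\cale$ all of whose vertices contain $\alpha$, and then uses Gervais's theorem that $\pmod(S_\alpha)$ is generated by the images of $t_1,\ldots,t_4$. Theorem \ref{thm-13} enters only to supply $h\in\pmod(S_\alpha)$ with $\{h(\alpha_1),h(\alpha_2)\}=\{\beta_1,\beta_2\}$; writing $h$ as a word in the generators and noting that $\Delta\cap w(\Delta)$ contains an edge of $\cale$ for every generator $w$, the prefix translates $\Delta, h_1(\Delta), h_1h_2(\Delta),\ldots,h(\Delta)$ form the desired chain. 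If you want to complete your proof you need either this generating-set mechanism or some independently proved connectivity statement replacing your ``elementary move'' claim; the degenerate overlaps you worry about ($\{u_1,u_2\}\cap\{u_3,u_4\}\neq\varnothing$) then require no separate treatment, since the translation argument handles all cases uniformly.
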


\begin{proof}
Let $a_1$ and $a_2$ denote the two BP-vertices of $\Pi$.
Let $b_1$ and $b_2$ denote the two BP-vertices of $\Omega$.

\begin{claim}\label{seq_square}
There exists a sequence of squares in $\cale$, $\Delta_1,\ldots,\Delta_n$, satisfying the following three conditions:
\begin{itemize}
\item The square $\Delta_1$ contains $a_1$ and $a_2$, and the square $\Delta_n$ contains $b_1$ and $b_2$. 
\item For any $k=1,\ldots,n$, any vertex of $\Delta_k$ contains $\alpha$.
\item For any $k=1,\ldots,n-1$, the intersection $\Delta_k\cap \Delta_{k+1}$ contains an edge of $\cale$.
\end{itemize}
\end{claim}

\begin{proof}
Let $\alpha_1$, $\alpha_2$, $\beta_1$ and $\beta_2$ be the curves in $S$ with $a_j=\{ \alpha, \alpha_j\}$ and $b_j=\{ \alpha, \beta_j\}$ for $j=1, 2$.
The curves $\alpha_1$ and $\alpha_2$ can be drawn as in Figure \ref{fig-gene} (a), where the surface $S_{\alpha}$ obtained by cutting $S$ along $\alpha$ is drawn.
We define $\gamma_1,\ldots, \gamma_4$ as the curves in $S$ drawn in Figure \ref{fig-gene} (b).
%====================================
\begin{figure}
\begin{center}
\includegraphics[width=8cm]{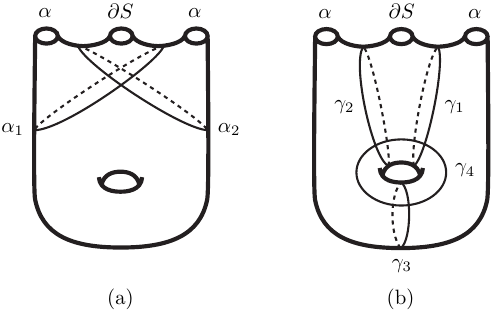}
\caption{}\label{fig-gene}
\end{center}
\end{figure}
%====================================
The equality $i(\alpha_1, \gamma_1)=i(\alpha_2, \gamma_2)=0$ then holds.
For $j=1,\ldots, 4$, let $t_j\in \mod(S)$ denote the Dehn twist about $\gamma_j$.
Let $\mod(S)_{\alpha}$ denote the stabilizer of $\alpha$ in $\mod(S)$, and define
\[q\colon \mod(S)_{\alpha}\rightarrow \mod(S_{\alpha})\]
as the natural homomorphism.
The group $\pmod(S_\alpha)$ is known to be generated by $q(t_1),\ldots, q(t_4)$ (see \cite{gervais}).
By Theorem \ref{thm-13}, there exists an element $h$ of $\pmod(S_{\alpha})$ with $\{ h(\alpha_1), h(\alpha_2)\}=\{ \beta_1, \beta_2\}$.
Write $h$ as a product $h=q(h_1)\cdots q(h_n)$ so that $h_j\in \{ t_1^{\pm 1},\ldots, t_4^{\pm 1}\}$ for any $j$.

The 4-tuple $(a_1, a_2, t_2(a_1), t_1(a_2))$ defines a square in $\cale$. We denote it by $\Delta$.
For any $w\in \{ t_1^{\pm 1},\ldots, t_4^{\pm 1}\}$, the intersection $\Delta \cap w(\Delta)$ contains an edge of $\cale$.
The sequence of squares in $\cale$,
\[\Delta,\ h_1(\Delta),\ h_1h_2(\Delta),\ \ldots ,\ h_1h_2\cdots h_n(\Delta)=h(\Delta),\]
is therefore a desired one.
\end{proof}

By the definition of $\cale$, the automorphism $\phi$ of $\calt(S)$ induces an automorphism of $\cale$.
If $\Delta_1,\ldots, \Delta_n$ are the squares in $\cale$ chosen in Claim \ref{seq_square}, then $\phi(\Delta_1),\ldots, \phi(\Delta_n)$ are also squares in $\cale$ such that
\begin{itemize}
\item $\phi(\Delta_1)$ contains $\phi(a_1)$ and $\phi(a_2)$, and $\phi(\Delta_n)$ contains $\phi(b_1)$ and $\phi(b_2)$; and
\item for any $k=1,\ldots, n-1$, the intersection $\phi(\Delta_k)\cap \phi(\Delta_{k+1})$ contains an edge of $\cale$.
\end{itemize}
It follows from Lemma \ref{square} that for any $k=1,\ldots, n$, there exists a non-separating curve in $S$ contained in any vertex of $\phi(\Delta_k)$.
The above second condition implies that this curve does not depend on $k$.
In particular, the curve shared by $\phi(a_1)$ and $\phi(a_2)$ is equal to the curve shared by $\phi(b_1)$ and $\phi(b_2)$.
\end{proof}

Lemma \ref{well-def} implies that the map $\Phi \colon V(S)\rightarrow V(S)$ constructed in the beginning of this subsection is well-defined.

\begin{lem}\label{simplicial}
Let $\phi$ be an automorphism of $\calt(S)$, and let $\Phi \colon V(S) \rightarrow V(S)$ be the map defined in the beginning of this subsection.
Then $\Phi$ defines a simplicial map from $\calc(S)$ into itself. 
\end{lem}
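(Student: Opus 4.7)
The plan is to verify that $\Phi$ sends edges of $\calc(S)$ to simplices, and hence simplices to simplices, by showing $i(\Phi(\alpha),\Phi(\beta))=0$ for every pair of distinct disjoint $\alpha,\beta\in V(S)$; since $\calc(S)$ is a flag complex this suffices. The crux is a compatibility lemma for $\Phi$ on BPs: for every $\{b_1,b_2\}\in V_{bp}(S)$, both $\Phi(b_1)$ and $\Phi(b_2)$ appear among the two constituent curves of the BP $\phi(\{b_1,b_2\})$. To establish it I would produce a hexagon $\Pi$ in $\calt(S)$ of type 1 whose two BP-vertices both contain $b_1$ and one of which is $\{b_1,b_2\}$; such $\Pi$ is obtained by applying $\lambda_{b_1}$ to a hexagon in $\calc_s(S_{b_1})$ that contains the p-curve representing $b_2$, whose existence is guaranteed by Theorem~\ref{thm-13}. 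Since $\phi$ preserves vertex types (Lemma~\ref{lem-phi-pre}) and the cyclic adjacency in $\calt(S)$, $\phi(\Pi)$ is again a hexagon of type 1, and by the definition of $\Phi(b_1)$ its two BP-vertices share the curve $\Phi(b_1)$; this forces $\Phi(b_1)\in\phi(\{b_1,b_2\})$, and the symmetric argument with $b_2$ in the shared position delivers $\Phi(b_2)\in\phi(\{b_1,b_2\})$.

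Armed with compatibility, the three easy cases are each handled by pulling back a single edge of $\calt(S)$ linking the two relevant vertices. If $\alpha,\beta$ are both separating, the edge $\{\alpha,\beta\}$ of $\calt(S)$ maps under $\phi$ to an edge, and $\Phi=\phi$ on separating curves gives the conclusion. If $\alpha$ is separating and $\beta$ is non-separating, I would construct a BP $\{\beta,\gamma\}\in V_{bp}(S)$ disjoint from $\alpha$ by selecting $\gamma$ as a p-curve in $S_\beta$ whose pair of pants avoids the subsurface containing $\alpha$; applying $\phi$ to the edge $\{\alpha,\{\beta,\gamma\}\}$ of $\calt(S)$ and invoking compatibility delivers $i(\Phi(\alpha),\Phi(\beta))=0$. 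If $\{\alpha,\beta\}\in V_{bp}(S)$, compatibility applied to this BP gives the disjointness directly.

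The main obstacle is the case where $\alpha,\beta$ are non-separating, disjoint, and $\{\alpha,\beta\}\notin V_{bp}(S)$, so $S_{\{\alpha,\beta\}}\cong S_{0,5}$. I would first verify that no two BP-vertices $\{\alpha,\gamma\}, \{\beta,\delta\}$ of $\calt(S)$ can be simultaneously disjoint: both BPs would demand pair-of-pants regions in $S$ each containing $\partial S$, which is geometrically impossible. Thus $\alpha$ and $\beta$ cannot be linked by a single edge of $\calt(S)$, and I would instead detour through an h-curve. Choose $e\in V_s(S)$ disjoint from both $\alpha,\beta$ so that the handle $H$ bounded by $e$ contains $\alpha$ while $\beta$ lies in the complementary $S_{1,2}$-subsurface; such $e$ corresponds in $S_{\{\alpha,\beta\}}\cong S_{0,5}$ to a p-curve cutting off the two $\alpha$-boundaries. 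The key positional characterization I would use is intrinsic to $\calt(S)$: a non-separating curve $\zeta$ disjoint from an h-curve $c$ lies in the handle bounded by $c$ if and only if $c$ is not an h-vertex of any hexagon of type 1 whose two BP-vertices share $\zeta$. Since $\phi$ bijects hexagons of type 1, sending one with shared curve $\zeta$ to one with shared curve $\Phi(\zeta)$, this characterization transfers through $\phi$, placing $\Phi(\alpha)$ inside the handle bounded by $\phi(e)$ and $\Phi(\beta)$ outside it, whence $i(\Phi(\alpha),\Phi(\beta))=0$.
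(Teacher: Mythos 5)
Your overall skeleton (reduce to pairwise disjointness, prove BP-compatibility, then treat the remaining type pairs) parallels the paper, and your BP-compatibility argument is essentially what the paper means by ``by the definition of $\Phi$'' together with Lemma \ref{well-def}. But two of your cases have genuine gaps. In the mixed case ($\alpha$ separating, $\beta$ non-separating), the BP $\{\beta,\gamma\}$ disjoint from $\alpha$ that you want simply does not exist when $\beta$ lies in the handle $H_\alpha$ cut off by $\alpha$: any $\gamma$ with $\{\beta,\gamma\}\in V_{bp}(S)$ is homologous to $\beta$ up to sign, and no curve in the complementary $S_{1,2}$ carries that class (its image in $H_1(S;\mathbb{Z})$ misses $[\beta]$), while inside $H_\alpha$ every non-separating curve disjoint from $\beta$ is isotopic to $\beta$. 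Equivalently, in $S_\beta\cong S_{1,3}$ the curve $\alpha$ is a p-curve cutting off \emph{both} $\beta$-boundaries, so no separating curve of $S_\beta$ disjoint from $\alpha$ can separate them. Hence the edge $\{\alpha,\{\beta,\gamma\}\}$ you would push through $\phi$ is unavailable in half of the configurations.

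Second, the ``positional characterization'' driving your hard case is false as stated: if $\zeta$ lies in the handle bounded by $c$, then $c$ \emph{does} occur as an h-vertex of hexagons of type 1 whose two BP-vertices share $\zeta$, namely as the h-vertex not adjacent to the BP-vertices (in $S_\zeta\cong S_{1,3}$, $c$ is the p-curve cutting off the two $\zeta$-boundaries, the vertex $d$ of Figure \ref{fig_hex_first}); and conversely every h-curve disjoint from $\zeta$ whose handle does not contain $\zeta$ also occurs. What separates the two situations is the position of $c$ inside such a hexagon (adjacent to a BP-vertex or not), equivalently whether some BP containing $\zeta$ is disjoint from $c$. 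Moreover, even after correcting the characterization your argument is circular: to ``place $\Phi(\alpha)$ inside the handle bounded by $\phi(e)$'' you must already know $i(\Phi(\alpha),\phi(e))=0$, and that is precisely the unproved bad sub-case of your mixed case ($\alpha\subset H_e$). The paper sidesteps both issues by exploiting the classification of type-1 hexagons: in any such hexagon the curve shared by the two BP-vertices lies in the handle cut off by the h-vertex not adjacent to them, hence is disjoint from all four h-vertices. Choosing type-1 hexagons whose BP-vertices share $\alpha$ and which contain $\beta$ (respectively, contain $\gamma$ and $\delta$ with $\alpha\in V(H_\gamma)$ and $\beta\in V(H_\delta)$) as h-vertices then gives the required disjointness directly, with no prior knowledge about intersections of the image curves.
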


\begin{proof}
Let $\alpha$ and $\beta$ be distinct elements of $V(S)$ with $i(\alpha,\beta)=0$. 
We have to show $i(\Phi(\alpha),\Phi(\beta))=0$. 
If both $\alpha$ and $\beta$ are separating in $S$, then this equality follows from the definition of $\Phi$ and simpliciality of $\phi$. 
If $\alpha$ and $\beta$ are non-separating curves in $S$ with $\{\alpha,\beta\}$ a BP in $S$, then $\Phi(\alpha)$ and $\Phi(\beta)$ are curves in the BP $\phi(\{\alpha, \beta\})$ by the definition of $\Phi$. 
The equality $i(\Phi(\alpha),\Phi(\beta))=0$ thus holds. 

For $\gamma \in V_s(S)$, we denote by $H_\gamma$ the component of $S_\gamma$ that is a handle.  
If $\alpha$ and $\beta$ are non-separating curves in $S$ such that $\{\alpha,\beta\}$ is not a BP in $S$, then there exist $\gamma, \delta \in V_s(S)$ with $\gamma \neq \delta$, $i(\gamma,\delta)=0$, $\alpha \in V(H_\gamma)$ and $\beta \in V(H_\delta)$.
We can then find two hexagons $\Pi$, $\Omega$ in $\calt(S)$ of type 1 such that
\begin{itemize}
\item the two BP-vertices of $\Pi$ contain $\alpha$, and those of $\Omega$ contain $\beta$; and 
\item both $\Pi$ and $\Omega$ contain the h-vertices $\gamma$ and $\delta$.
\end{itemize}
By the definition of $\Phi$, we have $\Phi(\alpha)\in V(H_{\phi(\gamma)})$ and $\Phi(\beta)\in V(H_{\phi(\delta)})$.
Since we have $\phi(\gamma)\neq \phi(\delta)$ and $i(\phi(\gamma),\phi(\delta))=0$, the equality $i(\Phi(\alpha),\Phi(\beta))=0$ holds. 

If $\alpha$ is non-separating in $S$ and $\beta$ is separating in $S$, then one can find two distinct BPs $a_1$, $a_2$ in $S$ containing $\alpha$ and a hexagon $\Pi$ in $\calt(S)$ of type 1 such that $\Pi$ contains $a_1$, $a_2$ and $\beta$ as its vertices. 
By the definition of $\Phi$, the curve $\Phi(\alpha)$ is disjoint from any curve corresponding to an h-vertex of $\phi(\Pi)$.
We therefore have $i(\Phi(\alpha),\Phi(\beta))=0$. 
\end{proof}

Let $\phi$ be an automorphism of $\calt(S)$.
We have constructed the simplicial map $\Phi \colon \calc(S)\rightarrow \calc(S)$ associated to $\phi$.
The simplicial map from $\calc(S)$ into itself associated to $\phi^{-1}$ is then the inverse of $\Phi$.
The map $\Phi$ is therefore an automorphism of $\calc(S)$ and is induced by an element of $\mod^*(S)$ by Theorem \ref{thm-iva}.
If $\{ \alpha, \beta \}$ is a BP in $S$, then $\Phi(\alpha)$ and $\Phi(\beta)$ are curves in the BP $\phi(\{ \alpha, \beta\})$ by the definition of $\Phi$, and are distinct because $\Phi$ is an automorphism of $\calc(S)$.
We thus have the equality $\{ \Phi(\alpha), \Phi(\beta)\}=\phi(\{ \alpha, \beta \})$.
It follows that $\phi$ is induced by an element of $\mod^*(S)$.
Combining this with Theorem \ref{surjective}, we obtain the following:

\begin{thm}
Any superinjective map from $\calt(S)$ into itself is induced by an element of $\mod^*(S)$. 
\end{thm}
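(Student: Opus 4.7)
The plan is to deduce the theorem from Ivanov's rigidity theorem \ref{thm-iva} for $\calc(S)$, by constructing from any superinjective map on $\calt(S)$ an automorphism of $\calc(S)$ inducing it. First I would invoke Theorem \ref{surjective} to upgrade the superinjective map $\phi \colon \calt(S)\to\calt(S)$ to an automorphism of $\calt(S)$; this is needed both to apply the construction to $\phi^{-1}$ and because ``type 1'' is a combinatorial property preserved only under automorphisms.

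Next I would define $\Phi \colon V(S) \to V(S)$ directly: on a separating curve set $\Phi(\alpha) = \phi(\alpha)$, and on a non-separating curve $\alpha$ choose any hexagon $\Pi$ in $\calt(S)$ of type 1 whose two BP-vertices both contain $\alpha$, then let $\Phi(\alpha)$ be the non-separating curve shared by the two BP-vertices of the type 1 hexagon $\phi(\Pi)$. To see this definition is independent of the choice of $\Pi$, I would cite Lemma \ref{well-def}; to check that $\Phi$ preserves disjointness in each of the four (non-)separating cases and thus defines a simplicial self-map of $\calc(S)$, I would cite Lemma \ref{simplicial}.

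Applying the same procedure to $\phi^{-1}$ yields a simplicial inverse of $\Phi$ on $\calc(S)$, so $\Phi$ is an automorphism of $\calc(S)$. Since $S=S_{2,1}$ has genus $g=2\geq 2$, Theorem \ref{thm-iva} then produces an element $h \in \mod^*(S)$ inducing $\Phi$. The last task is to check that $h$ recovers $\phi$ on all vertices of $\calt(S)$: on h-vertices this is immediate from $\Phi(\alpha) = \phi(\alpha)$, and for any BP-vertex $\{\alpha,\beta\}$ the two curves $\Phi(\alpha)$ and $\Phi(\beta)$ both lie in the BP $\phi(\{\alpha,\beta\})$ by construction and are distinct because $\Phi$ is a bijection of $V(S)$; consequently they exhaust the two curves of $\phi(\{\alpha,\beta\})$, giving the required equality $\{\Phi(\alpha),\Phi(\beta)\}=\phi(\{\alpha,\beta\})$.

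Essentially all of the substance has been pushed into the earlier lemmas, so there is little hard work left at this stage. The genuine obstacle of the whole program -- making $\Phi$ well-defined on non-separating curves, which encodes an unambiguous notion of ``lift'' from a type 1 hexagon to its shared non-separating curve -- has been reduced to Lemma \ref{well-def}, whose proof relies on connecting any two valid choices of hexagon by a sequence of squares in the graph $\cale$ and on Theorem \ref{thm-tree} controlling the fibers of $\pi$. Once those inputs are in place, the final deduction here is formal, and in particular does not require any further analysis of hexagons of types 2 or 3.
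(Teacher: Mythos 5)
Your proposal matches the paper's own argument essentially step for step: upgrade $\phi$ to an automorphism via Theorem \ref{surjective}, define $\Phi$ on $V(S)$ using type 1 hexagons with well-definedness from Lemma \ref{well-def} and simpliciality from Lemma \ref{simplicial}, invert via the construction applied to $\phi^{-1}$, apply Theorem \ref{thm-iva}, and then verify the BP equality $\{\Phi(\alpha),\Phi(\beta)\}=\phi(\{\alpha,\beta\})$ exactly as the paper does. The proof is correct and takes the same route, so there is nothing further to add.
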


%%%%%%%%%%%%%%%%%%%%%%%%%%%%%%%%%%%%%%%%

\appendix

\section{The minimal length of simple cycles}\label{sec-app}

Let $\cal{G}$ be a simplicial graph.
We mean by a {\it simple cycle} in $\cal{G}$ a subgraph of $\cal{G}$ obtained as the image of a simple closed path in $\cal{G}$.
A simple cycle in $\cal{G}$ is called {\it non-trivial} if its length is positive.

Throughout this appendix, we set $S=S_{2, 1}$.
We aim to show the following:

\begin{prop}\label{prop-five}
There exists no non-trivial simple cycle in $\calt(S)$ of length at most 5.
\end{prop}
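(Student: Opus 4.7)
The plan is to classify non-trivial simple cycles in $\calt(S)$ by their length $n\in\{3,4,5\}$ and by the number $k$ of BP-vertices they contain. Since every edge of $\calt(S)$ is either of h-h type or h-BP type, any two BP-vertices are non-adjacent in $\calt(S)$, so $k\le\lfloor n/2\rfloor$ and it suffices to consider $k\in\{0,1,2\}$.

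The case $k=0$ is handled verbatim as in the proof of Lemma \ref{6-h_hex}: the simplicial map $\pi\colon\calc(S)\to\calc(\bar S)$ sends any two adjacent h-vertices to the same vertex, so an all-h cycle collapses into $\pi^{-1}(\alpha)$ for a single vertex $\alpha$; by Theorem \ref{thm-tree} this fiber spans a tree and cannot contain a simple cycle.

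For $k\ge 1$, the main tool I would prove is the following key lemma: any $S_{1,2}$-subsurface $X\subset S$ (arising either as $Y_b$, the non-pants component of $S_b$ for a BP $b$, or as $R_a$, the non-handle component of $S_a$ for an h-curve $a$) contains no two distinct disjoint h-curves of $S$. Indeed, an h-curve of $S$ contained in $X$ separates $X$ into an $S_{0,3}$ piece (whose essential curves are all peripheral in $X$) and an $S_{1,1}$ piece (whose essential curves are non-separating in $S$, hence not h-curves), so any disjoint companion h-curve is forced to coincide with the original. The $k=1$ cases then fall out: for length $3$, the two h-neighbors of the BP both lie in $Y_{v_1}$ and are adjacent in the cycle, hence disjoint; for length $4$ or $5$, if any ``middle'' h-vertex $v_3$ lay in $Y_{v_1}$ it would already contradict the lemma, so $v_3$ intersects the BP, pushing $v_3$ into $R_{v_2}$ alongside another adjacent h-vertex and producing a forbidden pair there. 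In the length $5$ case one also exploits that three distinct h-vertices inside $Y_{v_1}$ must be pairwise intersecting, which forces an adjacent pair of cycle vertices to be non-disjoint.

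For $k=2$, I would first establish the auxiliary fact that $S=S_{2,1}$ admits no two disjoint BPs. If $v_1,v_3$ were disjoint BPs, then the curves $\gamma_1,\gamma_2$ of $v_3$ are non-separating and so lie in $Y_{v_1}$; the pair-of-pants component $P_{v_3}$ of $S\setminus v_3$ has interior disjoint from $\beta_1,\beta_2\in v_1$, so $P_{v_3}^{\circ}$ must sit inside either $P_{v_1}^{\circ}$ or $Y_{v_1}^{\circ}$, and both options contradict the location of $\partial P_{v_3}=\partial S\cup\gamma_1\cup\gamma_2$ (the first because $\gamma_i\in Y_{v_1}^{\circ}$ cannot lie in $P_{v_1}$, the second because $\partial S$ lies in $P_{v_1}$ and not in $Y_{v_1}$). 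Hence the two BPs of any $k=2$ cycle must intersect, and this intersecting case is where I anticipate the main obstacle: the h-vertices no longer all inhabit a single $S_{1,2}$-subsurface, so the argument must repeatedly apply the key lemma in several $R_v$- and $Y_b$-subsurfaces, case-splitting on which h-vertex of the cycle is disjoint from which BP (forcing its membership in a specific $Y_b$ or $R_v$), until two adjacent cycle vertices are exhibited as distinct disjoint h-curves in a common $S_{1,2}$.
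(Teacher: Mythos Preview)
Your case $k=0$ is handled exactly as in the paper, and the auxiliary observation that an $S_{1,2}$-subsurface of $S$ contains at most one $h$-curve of $S$ is correct and useful. However, there are two genuine gaps.

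First, in your $k=1$ treatment for $n\in\{4,5\}$, the step ``pushing $v_3$ into $R_{v_2}$ alongside another adjacent $h$-vertex'' does not go through. Take $n=4$ with $v_1$ the BP and $v_2,v_3,v_4$ the $h$-vertices. Certainly $v_3\in R_{v_2}$, but the only cycle-neighbours of $v_3$ are $v_2$ and $v_4$; the former is the boundary of $R_{v_2}$, and the latter satisfies $i(v_2,v_4)\neq 0$ (they are non-adjacent in the $4$-cycle), so $v_4\notin R_{v_2}$. No second $h$-curve of $S$ inside $R_{v_2}$ is produced, and your key lemma cannot be invoked. The same obstruction arises for $n=5$: with $v_1$ the BP, the ``middle'' vertices $v_3,v_4$ do lie in $R_{v_2}$ and $R_{v_5}$ respectively, but in neither subsurface do you find two \emph{disjoint} distinct $h$-curves of $S$, since the relevant pairs $(v_2,v_4)$, $(v_3,v_5)$ are non-adjacent and hence intersecting. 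Your ``three distinct $h$-vertices inside $Y_{v_1}$'' remark does not help either: only $v_2$ and $v_5$ are forced into $Y_{v_1}$, and they intersect. Second, your $k=2$ discussion is, as you acknowledge, only a programme; the ``no two disjoint BPs'' fact (which is correct) merely reconfirms that the two BPs in the cycle intersect, and the remaining case-analysis is not carried out.

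For comparison, the paper does not attempt to trap two disjoint $h$-curves in a common $S_{1,2}$. For $n\le 4$ it uses an Euler-characteristic count: from a $4$-cycle one extracts curves $\alpha_1,\alpha_3$ with $i(\alpha_1,\alpha_3)\neq 0$ and $\alpha_2,\alpha_4$ with $i(\alpha_2,\alpha_4)\neq 0$, and shows that the subsurfaces they fill (possibly enlarged when a filled handle appears) are disjoint and each have $|\chi|\ge 2$, contradicting $|\chi(S)|=3$. For $n=5$ the paper analyses the two BP patterns separately: in the $(h,\mathrm{BP},h,\mathrm{BP},h)$ pattern it studies the arcs $A\cap R$ and $E\cap R$ in the non-handle component $R$ of $S_C$ and derives a contradiction either from a shared curve in $b\cap d$ or via the map $\theta\colon\calt(S)\to\calc(\bar S)$; in the single-BP pattern it collapses the four consecutive $h$-vertices under $\theta$ to force $a=c$. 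Your $S_{1,2}$-uniqueness lemma, while true, does not by itself replace these arguments.
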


It turns out that hexagons in $\calt(S)$ are simple cycles in $\calt(S)$ of minimal positive length.
We first prove the following:

\begin{lem}\label{lem-four}
There exists no non-trivial simple cycle in $\calt(S)$ of length at most 4.
\end{lem}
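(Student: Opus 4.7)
A simple cycle in a simplicial graph has length at least $3$, so lengths $1$ and $2$ are automatic. For length $3$: I would observe that a triangle, i.e.\ three pairwise adjacent vertices, would form a $2$-simplex of $\calt(S)$, contradicting the fact that $\calt(S_{2, 1})$ is one-dimensional.

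For length $4$, I plan to write the cycle as $(v_1, v_2, v_3, v_4)$ and perform case analysis on the number $k$ of BP-vertices among the $v_i$. One-dimensionality of $\calt(S)$ again rules out chords: if some non-consecutive pair $v_i, v_{i + 2}$ were adjacent, one of the triples $\{v_{i - 1}, v_i, v_{i + 2}\}$ or $\{v_i, v_{i + 1}, v_{i + 2}\}$ would consist of three pairwise adjacent vertices and form a $2$-simplex. Combined with the observation that no two BP-vertices are adjacent in $\calt(S)$, this forces $k \in \{0, 1, 2\}$, and when $k = 2$ the two BP-vertices must occupy opposite positions. For $k = 0$: since adjacent h-vertices share their $\pi$-image in $\calc(\bar{S})$ (the key point used in Lemma \ref{6-h_hex}), the whole $4$-cycle lies in a single fibre of $\pi \colon \calc(S) \to \calc(\bar{S})$, which is a tree by Theorem \ref{thm-tree} and hence carries no non-trivial cycle. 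For $k = 1$, with $v_1$ the unique BP-vertex, the three h-vertices $v_2, v_3, v_4$ have a common $\pi$-image, and the representatives of $v_2$ and $v_4$ both lie in the $S_{1, 2}$-component of $S_{v_1}$; the argument of Lemma \ref{one-BP_hex} will then force $v_2 = v_4$, contradicting simplicity.

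The main obstacle is the case $k = 2$, with $v_1, v_3$ BP-vertices and $v_2, v_4$ h-vertices. Here the BPs $v_1$ and $v_3$ intersect in $S$, and so do $v_2$ and $v_4$. The plan is as follows: both $v_2$ and $v_4$ are h-vertices adjacent to the BP $v_1$, so their representatives live in the $S_{1, 2}$-component $Y$ of $S_{v_1}$ and, via the standard arc/h-curve correspondence used implicitly in Sections \ref{sec-type1}--\ref{sec-type2} (an h-vertex of $S$ in the link of a BP is determined by its arc from one boundary component of $Y$ to the other), determine non-isotopic essential simple arcs $\beta_2, \beta_4 \subset Y$ joining the two boundary components of $Y$. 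I will then bring in the second BP $v_3$: its trace $v_3 \cap Y$ is a disjoint union of essential simple arcs in $Y$ with endpoints on $\partial Y$, and both $\beta_2$ and $\beta_4$ must be disjoint from this trace because $v_2$ and $v_4$ are disjoint from $v_3$ in $S$. The hard part will be a direct surface-topological cut-and-paste inside $Y$, in the spirit of the arc analyses in the proofs of Lemmas \ref{lem-a0d0} and \ref{lem-zeta}: after cutting $Y$ along $v_3 \cap Y$, I expect the remaining region connecting the two components of $\partial Y$ to support at most one isotopy class of arc from one boundary of $Y$ to the other, forcing $\beta_2 = \beta_4$, hence $v_2 = v_4$, and contradicting simplicity of the cycle.
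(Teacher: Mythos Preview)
Your approach is sound but differs substantially from the paper's. For the $4$-cycle the paper gives a single uniform argument with no case split: from each $v_j$ extract a curve $\alpha_j\in V(S)$ (the h-curve itself, or one curve of the BP) so that $i(\alpha_1,\alpha_3)\neq 0$ and $i(\alpha_2,\alpha_4)\neq 0$; let $R_1$ and $R_2$ be the subsurfaces filled by $\{\alpha_1,\alpha_3\}$ and $\{\alpha_2,\alpha_4\}$ respectively (enlarged by the second BP-curve if the filled subsurface happens to be a handle). Then $R_1$ and $R_2$ can be realized disjointly with $|\chi(R_k)|\ge 2$, contradicting $|\chi(S)|=3$. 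This avoids the fibre-tree machinery entirely.

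Your cases $k=0$ and $k=1$ are correct and are pleasant re-uses of the arguments of Lemmas~\ref{6-h_hex} and~\ref{one-BP_hex}. For $k=2$, your expectation is in fact true, but as written it is only an expectation: you have not analysed what $v_3\cap Y$ looks like. The analysis is short once you notice that every arc of $v_3\cap Y$ lies in the pair of pants $P_2\subset Y$ cut off by $v_2$ (its endpoints are on $\partial Y$, not on $v_2$), and that a single essential arc in $P_2$ with endpoints on $\partial Y$ already pins down the unique h-curve of $Y$ disjoint from it, namely $v_2$; this forces $v_4=v_2$. Alternatively, you can bypass the arc picture entirely by invoking the fact, used verbatim in the proof of Lemma~\ref{lem-hex3}, that for any two \emph{distinct} h-curves in $S$ there is at most one BP disjoint from both: applied to $v_2,v_4$ this gives $v_1=v_3$ immediately. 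Either route closes the case, but you should carry one of them out rather than leave it as ``I expect''.
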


\begin{proof}
Since $\calt(S)$ is one-dimensional, there exists no simple cycle in $\calt(S)$ of length 3.
Assume that there are four vertices $v_1,\ldots, v_4$ of $\calt(S)$ with $i(v_j, v_{j+1})=0$ and $i(v_j, v_{j+2})\neq 0$ for any $j$ mod $4$.
We can find $\alpha_1,\ldots, \alpha_4\in V(S)$ such that
\begin{itemize}
\item for any $j=1,\ldots, 4$, we have either $\alpha_j=v_j\in V_s(S)$ or $v_j\in V_{bp}(S)$ and $\alpha_j\in v_j$; and
\item for any $k=1, 2$, we have $i(\alpha_k, \alpha_{k+2})\neq 0$.
\end{itemize}
For a surface $X$, we denote by $\chi(X)$ the Euler characteristic of $X$.
For $k=1, 2$, we define $Q_k$ as the subsurface of $S$ filled by $\alpha_k$ and $\alpha_{k+2}$.
If $|\chi(Q_k)|\geq 2$, then set $R_k=Q_k$.
If $|\chi(Q_k)|=1$, then $Q_k$ is a handle, and $\alpha_k$ and $\alpha_{k+2}$ are non-separating in $S$.
It follows that $v_k$ and $v_{k+2}$ are BPs in $S$.
The curve in $v_k$ distinct from $\alpha_k$, denoted by $\beta_k$, intersects the h-curve in $S$ corresponding to the boundary of $Q_k$.
In the case of $|\chi(Q_k)|=1$, we define $R_k$ as the subsurface of $S$ filled by the three curves $\alpha_k$, $\beta_k$ and $\alpha_{k+2}$. 
In both cases, $R_1$ and $R_2$ can be realized so that they are disjoint, and we have $|\chi(R_1)|\geq 2$ and $|\chi(R_2)|\geq 2$.
This contradicts $|\chi(S)|=3$.
The lemma follows.
\end{proof}

The proof of Proposition \ref{prop-five} reduces to showing the following:

\begin{lem}\label{lem-pen}
There exists no pentagon in $\calt(S)$.
\end{lem}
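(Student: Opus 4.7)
The plan is to argue by contradiction, supposing a pentagon $(v_1,\ldots,v_5)$ in $\calt(S)$ exists. Since each edge of $\calt(S)$ contains at least one h-vertex, the BP-vertices form an independent set in the 5-cycle, so the number $b$ of BP-vertices is at most 2; I will handle the cases $b=0,1,2$ separately. When $b=0$, all $v_i$ are h-vertices; the proof of Lemma~\ref{6-h_hex} shows that $\pi\colon\calc(S)\to\calc(\bar{S})$ identifies adjacent h-vertices, so all five $v_i$ project to a common $\alpha$ and hence lie in the tree $\pi^{-1}(\alpha)$ of Theorem~\ref{thm-tree}. But a tree contains no non-trivial simple cycle, so this case is immediate.

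For $b=1$, relabel so that $v_3$ is the unique BP-vertex. The edges $v_1v_2$, $v_5v_1$, $v_4v_5$ among h-vertices force $\pi(v_1)=\pi(v_2)=\pi(v_4)=\pi(v_5)=\alpha$, and these four vertices then form the sub-path $v_2-v_1-v_5-v_4$ in the tree $\pi^{-1}(\alpha)$. To derive a contradiction I combine this with the BP structure: $v_2$ and $v_4$ are h-curves disjoint from the BP $v_3$, so both lie in the handle side $Y_{v_3}\cong S_{1,2}$ of $v_3$ as handle-cutting curves, and a short direct check (using that any $S_{1,1}$ subsurface of $S_{1,2}$ cut off by a separating curve away from the boundary is essentially unique) shows that any two distinct handle-cutting curves in $S_{1,2}$ must intersect. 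Picking a curve $\alpha_3\in v_3$ with $i(v_1,\alpha_3)\neq 0$---possible since $v_1\cap v_3\neq\emptyset$---and adapting the filling/Euler-characteristic argument of Lemma~\ref{lem-four}, I aim to construct two disjoint subsurfaces $R_1,R_2$ of $S$ each of Euler characteristic at most $-2$, contradicting $|\chi(S)|=3$.

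For $b=2$, after relabeling take $v_1,v_3$ to be the two BP-vertices (non-adjacent in the pentagon), so $v_2,v_4,v_5$ are h-vertices with $v_4v_5$ the unique edge among them, forcing $\pi(v_4)=\pi(v_5)=\alpha$. The disjoint h-curves $v_4,v_5$ decompose $S$ as $H_4\cup Q\cup H_5$ with $Q\cong S_{0,3}$ containing $\partial S$; and since BPs cannot live in handle sides of h-curves, $v_1$ lies in $N_{v_5}\cong S_{1,2}$ and $v_3$ lies in $N_{v_4}\cong S_{1,2}$. The remaining intersections $v_1\cap v_3$, $v_1\cap v_4$, $v_3\cap v_5$ together with the h-vertex $v_2$---which intersects both $v_4$ and $v_5$ while being disjoint from both BPs---supply the curves needed for an analogous disjoint-subsurface contradiction. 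The main obstacle in both Cases $b=1$ and $b=2$ is that, unlike in Lemma~\ref{lem-four}, no two pentagon-non-adjacent pairs of vertices have all four underlying vertices pairwise disjoint or equal; overcoming this will require enlarging the fillings via curves taken from the BP-vertices to arrange the necessary disjointness while keeping $|\chi|\geq 2$ on each piece.
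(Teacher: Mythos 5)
Your case of a pentagon with only h-vertices is exactly the paper's first step (project by $\pi$ to $\calc(\bar S)$ and invoke the tree of Theorem \ref{thm-tree}), and your parity observation that a pentagon has at most two BP-vertices is correct. But the entire content of the lemma lies in the one-BP and two-BP cases, and there your proposal does not contain a proof: it contains a plan. You write that you ``aim to construct two disjoint subsurfaces $R_1,R_2$ \dots contradicting $|\chi(S)|=3$'' and that ``overcoming this will require enlarging the fillings via curves taken from the BP-vertices,'' i.e.\ you yourself flag the decisive step as unresolved. The auxiliary facts you do verify are not load-bearing: that two distinct h-curves disjoint from the BP $v_3$ must intersect merely re-derives that the non-adjacent pentagon vertices $v_2,v_4$ intersect, which is part of the definition of a pentagon.

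Moreover, there is reason to doubt the route can be completed as described. The mechanism of Lemma \ref{lem-four} works because in a square the two intersecting non-adjacent pairs are vertex-disjoint and mutually disjoint, so their filling subsurfaces can be made disjoint; in a pentagon any two intersecting non-adjacent pairs share a vertex (e.g.\ $(a,c)$ and $(c,e)$ both contain $c$), so the required disjointness is structurally absent, and it is unclear that throwing in BP-curves restores it while keeping $|\chi|\ge 2$ on each piece. A coarse Euler-characteristic count also cannot by itself be the deciding mechanism, since hexagons of types 1--3 exist and exhibit equally rich intersection patterns. The paper's proof is genuinely finer: for the two-BP pattern it cuts along the middle h-curve, analyzes how the arcs of the two ``outer'' h-curves sit in the non-handle component, and in the two resulting subcases derives a contradiction either by producing two disjoint h-curves in an $S_{1,3}$ that would have to be isotopic, or by showing $i(\theta(b),\theta(d))=1$ and then that $\theta(a)=\theta(c)$ forces $a=c$, using that curves in the component of $S_B$ missing $\partial S$ which are isotopic in $\bar S$ are isotopic in $S$ (the same tree/fiber phenomenon as in Theorem \ref{thm-tree}); the one-BP case is then reduced to that last argument. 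Some substitute for this arc-level analysis (or for the passage to $\calc(\bar S)$) is needed in your cases $b=1,2$, and supplying it is the actual proof, which your proposal leaves open.
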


\begin{proof}
Let $\bar{S}$ denote the closed surface obtained from $S$ by attaching a disk to $\partial S$.
We have the simplicial maps
\[\pi \colon \calc(S)\rightarrow \calc(\bar{S}),\quad \theta \colon \calt(S)\rightarrow \calc(\bar{S})\]
associated with the inclusion of $S$ into $\bar{S}$.
Note that $\theta$ sends each BP-vertex of $\calt(S)$ to a vertex of $\calc(\bar{S})$ corresponding to a non-separating curve in $\bar{S}$, and that both $\pi$ and $\theta$ send any two adjacent h-vertices to the same h-vertex of $\calc(\bar{S})$.
Since the fiber of $\pi$ over each vertex of $\calc(\bar{S})$ is a tree by Theorem \ref{thm-tree}, there exists no pentagon in $\calt(S)$ consisting of only h-vertices.
We thus have to show non-existence of pentagons in $\calt(S)$ having one or two BP-vertices.

\begin{claim}\label{claim-pen-2bps}
There exists no pentagon in $\calt(S)$ defined by a 5-tuple $(a, b, c, d, e)$ such that
$a$, $c$ and $e$ are h-vertices and $b$ and $d$ are BP-vertices.
\end{claim}

\begin{proof}
Assuming that such a 5-tuple $(a, b, c, d, e)$ exists, we deduce a contradiction.
Choose representatives $A,\ldots, E$ of $a,\ldots, e$, respectively, such that any two of them intersect minimally.
Let $R$ denote the component of $S_C$ that is not a handle.
Since $B$ is a BP disjoint from $A$ and $C$, the intersection $A\cap R$ consists of mutually isotopic, essential simple arcs in $R$ which are non-separating in $R$.
Similarly, $E\cap R$ also consists of mutually isotopic, essential simple arcs in $R$ which are non-separating in $R$.
Let $l_A$ be a component of $A\cap R$, and let $l_E$ be a component of $E\cap R$.
The arcs $l_A$ and $l_E$ are not isotopic because otherwise the equality $b=d$ would hold.
Since $A$ and $E$ are disjoint, $l_A$ and $l_E$ are also disjoint.

We first assume that along $C$, the end points of $l_A$ first appear, and those of $l_E$ then appear.
Cut $R$ along $l_A$.
We then obtain a pair of pants.
Up to a homeomorphism of that pair of pants fixing points of its boundary, the arc $l_E$ is drawn as in Figure \ref{fig-pants} (a) or (b).
%====================================
\begin{figure}
\begin{center}
\includegraphics[width=12cm]{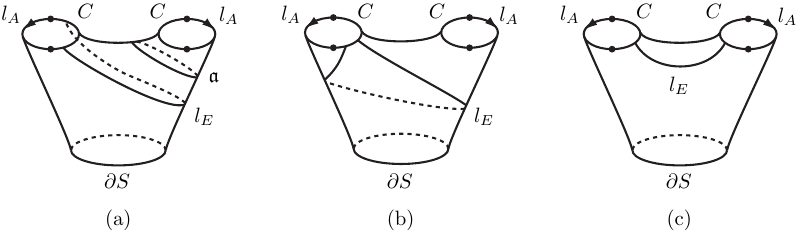}
\caption{The pair of pants obtained by cutting $R$ along $l_A$}\label{fig-pants}
\end{center}
\end{figure}
%====================================
In Figure \ref{fig-pants} (b), the union of $l_E$ and a subarc of $C$ cuts off an annulus containing $\partial S$ from $S$, and this contradicts that $l_E$ is non-separating in $R$.
The arc $l_E$ is thus drawn as in Figure \ref{fig-pants} (a).
The curve $\mathfrak{a}$ in Figure \ref{fig-pants} (a) is a boundary component of a regular neighborhood of $l_A\cup C$ in $S$, and is also that of $l_E\cup C$.
It follows that the isotopy class of $\mathfrak{a}$ is contained in $b$ and $d$.
The surface $S_{\mathfrak{a}}$ obtained by cutting $S$ along $\mathfrak{a}$ is homeomorphic to $S_{1, 3}$.
The curve $A$ is an h-curve in $S_{\mathfrak{a}}$ because $A$ is disjoint from the BP $B$ in $S$.
Similarly, $E$ is also an h-curve in $S_{\mathfrak{a}}$ because $E$ is disjoint from the BP $D$ in $S$.
Since $A$ and $E$ are disjoint h-curves in $S_{\mathfrak{a}}$, they have to be isotopic.
This is a contradiction.

We next assume that along $C$, the end points of $l_A$ and $l_E$ appear alternately.
Cut $R$ along $l_A$.
We then obtain a pair of pants.
Up to a homeomorphism of that pair of pants fixing points of its boundary, the arc $l_E$ is drawn as in Figure \ref{fig-pants} (c).
We then have $i(\theta(b), \theta(d))=1$.
The curves $\theta(b)$ and $\theta(d)$ fill a component of $\bar{S}_{\theta(c)}$.
The equality $\theta(a)=\theta(e)$ holds because $a$ and $e$ are adjacent h-vertices.
It follows that $\theta(a)$ is an h-curve in $\bar{S}$ disjoint from $\theta(b)$ and $\theta(d)$.
We thus have the equality $\theta(a)=\theta(c)$.
On the other hand, $A$ and $C$ are curves in the component of $S_B$ that does not contain $\partial S$.
The equality $\theta(a)=\theta(c)$ implies the equality $a=c$.
This is a contradiction.
\end{proof}

\begin{claim}\label{claim-pen-1bp}
There exists no pentagon in $\calt(S)$ defined by a 5-tuple $(a, b, c, d, e)$ such that
$a$, $c$, $d$ and $e$ are h-vertices and $b$ is a BP-vertex.
\end{claim}

\begin{proof}
Assume that such a 5-tuple $(a, b, c, d, e)$ exists.
We have the equality $\theta(a)=\theta(e)=\theta(d)=\theta(c)$.
We can deduce a contradiction along the argument in the end of the proof of Claim \ref{claim-pen-2bps}.
\end{proof}

Claims \ref{claim-pen-2bps} and \ref{claim-pen-1bp} complete the proof of Lemma \ref{lem-pen}. 
\end{proof}

%%%%%%%%%%%%%%%%%%%%%%%%%%%%%%%%%%%%%%%%%%%%%


\begin{thebibliography}{99}

\bibitem{bbm}M. Bestvina, K.-U. Bux and D. Margalit, The dimension of the Torelli group, {\it J. Amer. Math. Soc.} {\bf 23} (2010), 61--105.

\bibitem{birman}J. S. Birman, {\it Braids, links, and mapping class groups}, Ann. of Math. Stud., 82, Princeton Univ.\ Press, Princeton, N.J., 1974.

\bibitem{bm}T. Brendle and D. Margalit, Commensurations of the Johnson kernel, {\it Geom. Topol.} {\bf 8} (2004), 1361--1384.

\bibitem{bm-add}T. Brendle and D. Margalit, Addendum to: Commensurations of the Johnson kernel, {\it Geom. Topol.} {\bf 12} (2008), 97--101.

\bibitem{farb-ivanov}B. Farb and N. V. Ivanov, The Torelli geometry and its applications: research announcement, {\it Math. Res. Lett.} {\bf 12} (2005), 293--301.

\bibitem{flp}A. Fathi, F. Laudenbach and V. Po\'enaru, {\it Travaux de Thurston sur les surfaces}. S\'eminaire Orsay, Ast\'erisque, 66--67. Soc. Math. France, Paris, 1979.

\bibitem{gervais}S. Gervais, A finite presentation of the mapping class group of a punctured surface, {\it Topology} {\bf 40} (2001), 703--725. 

\bibitem{irmak1}E. Irmak, Superinjective simplicial maps of complexes of curves and injective homomorphisms of subgroups of mapping class groups, {\it Topology} {\bf 43} (2004), 513--541.

\bibitem{iva-aut}N. V. Ivanov, Automorphisms of complexes of curves and of Teichm\"uller spaces, {\it Int. Math. Res. Not.} {\bf 1997}, no. 14, 651--666. 

\bibitem{johnson}D. L. Johnson, Homeomorphisms of a surface which act trivially on homology, {\it Proc. Amer. Math. Soc.} {\bf 75} (1979), 119--125.

\bibitem{kls}R. P. Kent IV, C. J. Leininger and S. Schleimer, Trees and mapping class groups, {\it J. Reine Angew. Math.} {\bf 637} (2009), 1--21.

\bibitem{kida-tor}Y. Kida, Automorphisms of the Torelli complex and the complex of separating curves, {\it J. Math. Soc. Japan} {\bf 63} (2011), 363--417.

\bibitem{kida-cohop}Y. Kida, The co-Hopfian property of the Johnson kernel and the Torelli group, preprint, arXiv:0911.3923, to appear in {\it Osaka J. Math.}

\bibitem{kork-aut}M. Korkmaz, Automorphisms of complexes of curves on punctured spheres and on punctured tori, {\it Topology Appl.} {\bf 95} (1999), 85--111.

\bibitem{luo}F. Luo, Automorphisms of the complex of curves, {\it Topology} {\bf 39} (2000), 283--298.

\bibitem{mv}J. D. McCarthy and W. R. Vautaw, Automorphisms of Torelli groups, preprint, arXiv:math/0311250.

\bibitem{mess}G. Mess, The Torelli group for genus 2 and 3 surfaces, {\it Topology} {\bf 31} (1992), 775--790.

\bibitem{mosher}L. Mosher, Tiling the projective foliation space of a punctured surface, {\it Trans. Amer. Math. Soc.} {\bf 306} (1988), 1--70.

\bibitem{putman-conn}A. Putman, A note on the connectivity of certain complexes associated to surfaces, {\it Enseign. Math. (2)} {\bf 54} (2008), 287--301. 

\end{thebibliography}
\end{document}